\documentclass[12pt,a4paper]{article}
\usepackage{amsfonts}
\usepackage{mathrsfs}
\usepackage{amsmath}
\usepackage{amssymb}
\usepackage{cite}
\usepackage{relsize}
\usepackage{mathtools}
\usepackage{amsthm}
\usepackage[pdftex]{graphicx}

\usepackage[small,nohug,heads=littlevee]{diagrams}

% typesetting
\linespread{1.3}

  \DeclareMathOperator{\Aut}{Aut}
 
 \DeclareMathOperator{\im}{im}

  \DeclareMathOperator{\coker}{coker}

%\numberwithin{equation}{section}

\newtheorem{theorem}{Theorem}
\numberwithin{theorem}{section}
\newtheorem{lemma}[theorem]{Lemma}
\newtheorem{proposition}[theorem]{Proposition}
\newtheorem{corollary}[theorem]{Corollary}
\newtheorem{conjecture}[theorem]{Conjecture}
\theoremstyle{definition}
\newtheorem{definition}[theorem]{Definition}

\newtheorem{example}[theorem]{Example}

\newcommand{\Z}{{\mathbb{Z}}}
\def\pt{*}

\def\Intersect{\bigcap}
\def\union{\cup}
\def\rdiag{\bar{\Delta}}
\def\tilDel{\widetilde{\Delta}}
\def\tilde{\widetilde}
\def\P{\mathcal{P}}

\def\smash{\wedge}

\def\intersect{\cap}
\def\isom{\cong}
\def\tensor{\otimes}
\def\dirsum{\oplus}
\def\into{\hookrightarrow}
\def\Dirsum{\bigoplus}
\def\genby#1{{\langle #1\rangle}}

\newcommand{\Ast}{\mathop{\mathlarger{\mathlarger{\ast {} }} {} }}

\def\Loop{\Omega}

\def\hat{\widehat}

\def\phi{\varphi}
\def\redH{\widetilde{H}}
\def\redh{\widetilde{b}}
\def\del{\partial}
\def\barDel{\bar{\Delta}}

%colin's cats
\def\cat#1{\mathsf{#1}}
\def\Grp{\cat{Grp}}
\def\Grpd{\cat{Grpd}}

\def\PtSet{\cat{Set}_*}

\def\bar{\overline}

\def\Z{{\mathbb Z}}

\def\R{{\mathbb R}}
\def\F{{\mathbb{F}}}

\def\tensor{\otimes}

\def\calI{{\mathcal I}}
\def\calJ{{\mathcal J}}
\def\co{\colon\thinspace}

%diagrams package
\newarrow{Embed}{C}{-}{-}{-}{>}
\newarrow{Equals}{=}{=}{=}{=}{=}
\newarrow{Unique}{.}{.}{.}{.}{>}

\setcounter{tocdepth}{4}
\setcounter{secnumdepth}{4}

\begin{document}

\title{Homology Decompositions of the Loops on 1-Stunted Borel Constructions of $C_2$-Actions\footnote{The authors gratefully acknowledge the assistance of Singapore Ministry of Education research grants AcRF Tier 1(WBS No. R-146-000-137-112) and AcRF Tier 2 (WBS No. R-146-000-143-112). The 2nd author is supported in part by a grant (No. 11028104) of NSFC of China.}}
\author{Man Gao and Jie Wu}
\date{}
\maketitle

\begin{abstract}
Carlsson's construction is a simplicial group whose geometric realization is the loop space of the 1-stunted reduced Borel construction. Our main results are: i) Given a pointed simplicial set acted upon by the discrete cyclic group $C_2$ of order 2, if the orbit projection has a section, then this loop space has a mod 2 homology decomposition; ii) If the reduced diagonal map of the $C_2$-invariant set is homologous to zero, then the pinched sets in the above homology decomposition themselves have homology decompositions in terms of the $C_2$-invariant set and the orbit space. Result i) generalizes a previous homology decomposition of the second author for trivial actions. To illustrate these two results, we completely compute the mod 2 Betti numbers for an example.
\end{abstract}

%-------------------section 1--------------------------------------------------------------
\section{Introduction}

It is a general problem in algebraic topology to compute the homology of a loop space, failing which, to give a homology decomposition of a loop space.
%Although there is no general solution to this problem, methods in algebra and simplicial homotopy theory can be applied if this loop space is realized in terms of a simplicial group construction. In fact Kan \cite{Kan58} constructed a simplicial group whose geometric realization is the loop space functor (see \cite{Curtis71} and Section V.5 of \cite{GeorssJardine99} for general introductions.) However, Kan's construction is a free simplicial group, which motivates the search for smaller simplicial group constructions.
In this paper, we show in Theorem \ref{thm: homology decomp of JGX} that, if the orbit projection has a section, then there is a mod 2 homology decomposition of a certain loop space $\Loop (X\rtimes_{C_2} W_\infty^1 C_2)$. This generalizes a previous homology decomposition of the second author for trivial actions (see the original paper \cite{Wu97} and Section \ref{sec: E0 is group ring} below.)

The following notational conventions that will be used throughout this paper. We reserve $G$ to denote the discrete cyclic group $C_2$ of order 2, written multiplicatively with generator $t$. In particular $t^2=1$. Let $X$ denote a pointed simplicial $G$-set. Denote by $A$ the simplicial subset of $X$ fixed under the $G$-action. Let $\F_2$ denote the finite field with two elements.

The 1-stunted reduced Borel construction $X\rtimes_{C_2} W_\infty^1 C_2$ is the homotopy cofiber of the inclusion from $X$ into its reduced Borel construction. Carlsson constructed a simplicial group $J^G[X]$ is the loop space $\Loop (X\rtimes_{C_2} W_\infty^1 C_2)$. See Section \ref{sec: preliminaries} for details.

The {\textit{orbit projection}} is the simplicial epimorphism $X\to X/G$ onto the orbit space. A {\emph{section of the orbit projection}} is a simplicial map $j\co X/G\to X$ such that the composite $X/G\xrightarrow{j} X\to X/G$ is the identity map on $X/G$. Simplicial $G$-sets whose orbit projection has a section is characterized in Proposition \ref{prop: form of decomposable Z2}.

\begin{theorem} \label{thm: homology decomp of JGX}
If the orbit projection has a section, then there an isomorphism of $\F_2$-algebras:
\begin{equation} \label{eq: E1 collapse}
\redH_*(\Omega(X\rtimes_G W_\infty^1 G);\F_2)\isom \Dirsum_{s=1}^\infty \redH_*\left((X/G)^{\smash s}/\tilDel_s;\F_2\right)
\end{equation}
Here $\tilDel_0=\tilDel_1:=\pt$ and:
\[
\tilDel_s:=\{x_1G\smash \cdots \smash x_s G\in (X/G)^{\smash s}|\, \exists i=1,\ldots, s-1\, (x_i=x_{i+1}\in A)\} \quad (s\ge 2.)
\]
\end{theorem}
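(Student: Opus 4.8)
The plan is to put a word-length filtration on Carlsson's simplicial group $J^G[X]$, whose geometric realization is $\Loop(X\rtimes_G W_\infty^1 G)$, and to analyze the resulting mod $2$ homology spectral sequence; this follows the template used for trivial actions in \cite{Wu97} (recalled in Section \ref{sec: E0 is group ring}). Concretely, let $\pt=F_0\subseteq F_1\subseteq F_2\subseteq\cdots\subseteq J^G[X]$ be the filtration in which $F_s$ is spanned by the products of at most $s$ generators arising from $X$. Applying mod $2$ chains and passing to homology yields a first-quadrant, multiplicative spectral sequence converging to $\redH_*(\Loop(X\rtimes_G W_\infty^1 G);\F_2)$ with $E^1_{s,*}\isom\redH_*\!\left(F_s/F_{s-1};\F_2\right)$, the product on $E^1$ being induced by concatenation.

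The first task is to identify the filtration quotients $F_s/F_{s-1}$, and this is exactly where the hypothesis that the orbit projection admits a section $j\co X/G\to X$ is used. By Proposition \ref{prop: form of decomposable Z2} the section lets one rewrite any reduced length-$s$ word in $X$ canonically as a word in the letters of $X/G$ together with bookkeeping of the $G$-action carried along it. The defining relations of $J^G[X]$ recalled in Section \ref{sec: preliminaries} --- the equivariance relation $t\cdot a=a$ for $a\in A$ together with the simplicial and commutator identities --- then collapse to the basepoint precisely those $s$-tuples in which two consecutive letters are equal and lie in the fixed set $A$; that is, the subset $\tilDel_s$. Hence $F_s/F_{s-1}$ is a model for $(X/G)^{\smash s}/\tilDel_s$ (with the conventions $\tilDel_0=\tilDel_1=\pt$, which make $E^1_{0,*}=0$ so that only $s\ge1$ survives), giving $E^1_{s,*}\isom\redH_*\!\left((X/G)^{\smash s}/\tilDel_s;\F_2\right)$.

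It remains to show the spectral sequence degenerates at $E^1$ and to solve the extension problem. Using the section one constructs James--Hopf-type maps that split each quotient $F_s/F_{s-1}$ off $J^G[X]$ stably; combined with multiplicativity of the spectral sequence this forces every differential $d^r$ ($r\ge 1$) to vanish, so $E^\infty=E^1$, and it simultaneously resolves the extensions, upgrading the associated-graded identification to an isomorphism of $\F_2$-algebras as in \eqref{eq: E1 collapse}. Here the right-hand side is given the product induced by smash-concatenation of words, which is well defined because $\tilDel_s$ and $\tilDel_t$ both land in $\tilDel_{s+t}$. I expect the main obstacle to be the combinatorial heart of the second step together with the collapse: proving that Carlsson's relations collapse neither more nor less than $\tilDel_s$ and that this identification is natural with respect to faces and degeneracies, and then building the James--Hopf-type maps out of the section so that all higher differentials and all extensions vanish. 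The trivial-action case of \cite{Wu97} is the guide, but the $G$-bookkeeping along words, and the interaction between the fixed letters of $A$ and the free letters, is genuinely new content.
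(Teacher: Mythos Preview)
Your proposed filtration is not the one that produces the quotients $(X/G)^{\smash s}/\tilDel_s$. You filter the simplicial group $J^G[X]$ itself by word length in the generators from $X$, but the paper filters the \emph{group ring} $\F_2(J^G[X])$ by powers of the augmentation ideal $I$. These give genuinely different associated gradeds, and your identification $F_s/F_{s-1}\simeq (X/G)^{\smash s}/\tilDel_s$ is false already for $s=2$. Take $X=\{\ast,x,xt\}$ with $G$ swapping $x\leftrightarrow xt$, so $A=\{\ast\}$ and $J^G[X]\cong\Z$ with $xt=x^{-1}$. Then $F_2/F_1=\{x^2,x^{-2},\ast\}$ has $\redH_0=\F_2^2$, whereas $(X/G)^{\smash 2}/\tilDel_2=(X/G)^{\smash 2}$ is a single non-basepoint orbit with $\redH_0=\F_2$. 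The point is that the relation in $J^G[X]$ is $x\cdot xt=1$ for \emph{every} $x\in X$, not only the ``equivariance relation $t\cdot a=a$ for $a\in A$'' you cite; for $x\notin A$ this relation shortens words but does \emph{not} identify $x$ with $xt$ at the level of the group, so the orbit space $X/G$ never appears in your word-length quotients.

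What makes $X/G$ appear is the passage to $\F_2(J^G[X])$ and the observation that $\bar{x}+\bar{xt}=\bar{x}\,\bar{xt}\in I^2$, so that $\bar{x}\equiv\bar{xt}$ in $I/I^2$; this is Lemma~\ref{lem: alg model of E^0 for aug ideal} and Corollary~\ref{cor: E0 term as space}, and it holds \emph{without} any section hypothesis. The section enters only afterwards, to split the short exact sequences $I^{s+1}\to I^s\to I^s/I^{s+1}$ of simplicial $\F_2$-modules (no James--Hopf maps are needed: one simply sends $y_1G\smash\cdots\smash y_sG\mapsto \bar{y_1}\cdots\bar{y_s}$ using the chosen representatives $y_i\in Y$). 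A separate, non-obvious ingredient you do not mention is convergence: one needs $\bigcap_n I^n=0$, which the paper obtains from Gruenberg's theorem that a free product of $C_2$'s and $C_\infty$'s is residually a finite $2$-group (Corollary~\ref{cor: resdidual nil of gpring}). With these two inputs one gets $\F_2(J^G[X])\cong\bigoplus_s I^s/I^{s+1}$ as simplicial $\F_2$-modules, and Dold--Thom finishes the argument. Your outline would need to be rebuilt around the augmentation filtration to recover these steps.
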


To compute the direct summands in $\eqref{eq: E1 collapse}$, we can consider the long exact sequence associated to cofiber sequence $\tilDel_s\to (X/G)^{\smash s}\to (X/G)^{\smash s}/\tilDel_s$:
\begin{equation} \label{eq: les for tilDel}
\cdots\to \redH_*(\tilDel_s)\to \redH_*((X/G)^{\smash s})\to \redH_*((X/G)^{\smash s}/\tilDel_s)\to \redH_{*-1}(\tilDel_s)\to\cdots
\end{equation}
Here we suppress the coefficients $\F_2$. Our next result is a sufficient condition for there to be a homology decomposition of the pinched set $\tilDel_s$.

The reduced diagonal map of $A$ is the simplicial map $A\to A\smash A$ is given by $a\mapsto a\smash a$ for all $a\in A_n$. A pointed simplicial map $f\co Y\to Z$ is {\textit{mod 2 homologous to zero}} if the induced map $f_*\co \redH_*(Y;\F_2)\to \redH_*(Z;\F_2)$ is the zero map. We show that if the reduced diagonal map of $A$ is mod 2 homologous to zero, then the mod 2 homology of $\tilDel_s$ is completely determined by the mod 2 homology of the fixed set $A$ and the orbit space $X/G$.

The following multi-index notation is used. Let $\redh_t(Y;\F_2):=\dim \redH_t(Y;\F_2)$ denote the {\textit{$t$-th reduced mod 2 Betti number of $Y$}}, that is, the dimension of mod 2 reduced homology of a pointed simplicial set $Y$ in dimension $t$. A {\emph{multi-index}} $\alpha=(\alpha_1,\ldots,\alpha_d)$ is a (possibly empty) sequence of positive integers. The {\emph{length}} of this multi-index is $|\alpha|=\alpha_1+\cdots +\alpha_d$ and its {\emph{dimension}} is $\dim \alpha=d$. Given a multi-index $\alpha=(\alpha_1,\ldots,\alpha_d)$, write for short the following product:
\[
\redh_\alpha(Y;\F_2):=\redh_{\alpha_1}(Y;\F_2)\redh_{\alpha_2}(Y;\F_2) \cdots \redh_{\alpha_d}(Y;\F_2)
\]

\begin{theorem} \label{thm: homology decomp of tilDel}
If the reduced diagonal map of $A$ is mod $2$ homologous to zero, then the mod 2 Betti number of $\tilDel_s$ is given by:
\[
\redh_t(\tilDel_s;\F_2)\isom \sum_{\substack{|\lambda|+|\mu| =t-s+\dim\lambda+\dim\mu + 1 \\ 2\le\dim\lambda+\dim\mu+1\le s}} c_{\lambda,\mu}\redh_\lambda(X/G;\F_2) \redh_\mu (A;\F_2),
\]
where $c_{\lambda,\mu}=\binom{\dim \lambda +\dim\mu}{\dim\mu}\binom{s-\dim \lambda-\dim\mu-1}{\dim\mu-1}$.
\end{theorem}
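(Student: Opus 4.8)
The plan is to realize $\tilDel_s$ as the union of an explicit family of subcomplexes of $(X/G)^{\smash s}$, each a smash product of copies of $A$ and of $X/G$, to show that the reduced diagonal hypothesis forces every attaching map of this cover to vanish on mod $2$ homology, and hence that $\redH_*(\tilDel_s;\F_2)$ is the corresponding direct sum of degree-shifted smash-product homologies; the stated formula will then be a combinatorial count. For $1\le i\le s-1$ put $D_i=\{x_1G\smash\cdots\smash x_sG:x_i=x_{i+1}\in A\}$, so $\tilDel_s=\bigcup_{i=1}^{s-1}D_i$, and for nonempty $S\subseteq\{1,\dots,s-1\}$ put $D_S=\bigcap_{i\in S}D_i$. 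Breaking $S$ into its maximal runs of consecutive integers, a run of length $\ell$ forces $\ell+1$ consecutive coordinates to be one common element of $A$; hence $D_S\isom A^{\smash k(S)}\smash (X/G)^{\smash(s-|S|-k(S))}$, where $k(S)$ is the number of runs, and by the K\"unneth theorem $\redH_*(D_S;\F_2)$ is the $\F_2$-tensor product of $k(S)$ copies of $\redH_*(A;\F_2)$ and $s-|S|-k(S)$ copies of $\redH_*(X/G;\F_2)$.

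Next I would check that for every $T\subsetneq S$ the inclusion $D_S\into D_T$ is mod $2$ homologous to zero. Factor it into steps $D_{S'}\into D_{S'\setminus\{i\}}$ with $T\subseteq S'\subseteq S$; on the tensor factor affected by deleting $i$, such a step is the reduced diagonal of $A$, possibly post-composed with the canonical inclusion $A\into X/G$ on one or both of the two new smash factors — according as deleting $i$ splits a run, shortens a run, or removes a length-one run — and smashed with the identity on the remaining factors. Since the reduced diagonal of $A$ is mod $2$ homologous to zero, each such step, hence each composite $D_S\into D_T$, induces the zero map on $\redH_*(-;\F_2)$.

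The crux, which I expect to be the main obstacle, is to pass from this to the isomorphism
\[
\redH_*(\tilDel_s;\F_2)\ \isom\ \bigoplus_{\emptyset\ne S\subseteq\{1,\dots,s-1\}}\redH_{*-|S|+1}(D_S;\F_2).
\]
In the Mayer--Vietoris spectral sequence of the cover $\{D_i\}$ one has $E^1_{p,q}=\bigoplus_{|S|=p+1}\redH_q(D_S;\F_2)$ with $d^1$ the alternating sum of the maps of the previous paragraph, so $d^1=0$; but this alone does not kill the higher differentials, because a union of subcomplexes each of which maps nullhomologically into an ambient complex need not itself map nullhomologically. My plan is an induction on $s$. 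Write $\tilDel_s=D_1\cup V$ with $V=D_2\cup\cdots\cup D_{s-1}=(X/G)\smash\tilDel_{s-1}$ (the first coordinate free); then $\tilDel_s/D_1\isom V/(D_1\cap V)$, and $D_1\cap V=\bigcup_{i\ge2}(D_1\cap D_i)$ is again a complex of the same shape, now for the mixed smash type $(A,X/G,\dots,X/G)$. The cofiber sequence $D_1\into\tilDel_s\onto\tilDel_s/D_1$ splits on $\F_2$-homology once the inclusion $D_1\cap V\into D_1$ induces zero (the relevant connecting homomorphism factors through it), and that inclusion is of a mixed $\tilDel$-complex into its ambient smash; one then recurses on $V/(D_1\cap V)$. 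Making this induction close up is the delicate point: it should be carried out simultaneously for all mixed complexes $\tilDel(\varepsilon)$ with $\varepsilon\in\{A,X/G\}^m$, keeping track of both the homology splitting and the nullity of the inclusion of $\tilDel(\varepsilon)$ into its ambient smash, so that the ``$D_1\cap V\into D_1$ is null'' input is always available.

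Granting the displayed isomorphism, the formula is pure bookkeeping. Group the right-hand side by $b:=k(S)$ and $a:=s-|S|-k(S)$. When $b\ge1$ and $|S|=s-a-b\ge1$, the nonempty $S\subseteq\{1,\dots,s-1\}$ with $k(S)=b$ and $|S|=s-a-b$ are counted by choosing the $b$ run lengths — a composition of $s-a-b$ into $b$ positive parts, so $\binom{s-a-b-1}{b-1}$ choices — and then placing the $b$ resulting blocks in the $s-1$ positions with a forced empty position between consecutive blocks, which by stars and bars (distribute the $a+b-1$ empty positions among the $b+1$ gaps, the $b-1$ interior ones being nonempty) is $\binom{a+b}{b}$; the product is $c_{\lambda,\mu}$ for $a=\dim\lambda$, $b=\dim\mu$, and the formula's side condition $2\le\dim\lambda+\dim\mu+1\le s$ restricts the sum to $1\le a+b\le s-1$, within which this product counts exactly the $S$ in question (and vanishes when no such $S$ exists). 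Each such $D_S\isom A^{\smash b}\smash(X/G)^{\smash a}$ contributes to $\redh_t(\tilDel_s;\F_2)$ the sum $\sum\redh_\lambda(X/G;\F_2)\,\redh_\mu(A;\F_2)$ over $\lambda,\mu$ with $\dim\lambda=a$, $\dim\mu=b$ and $|\lambda|+|\mu|=t-|S|+1=t-s+\dim\lambda+\dim\mu+1$; summing over all $S$ yields the asserted identity.
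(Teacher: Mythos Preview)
Your approach is the paper's: the same cover $\tilDel_s=\bigcup_i D_i$ (the paper writes $\barDel^{(2,1,\ldots,1)}\cup\cdots\cup\barDel^{(1,\ldots,1,2)}$), the same identification of each intersection $D_S$ with a smash product of copies of $A$ and $X/G$ (the paper's Proposition on $\barDel_\calI=\barDel^\alpha$), the same use of the reduced-diagonal hypothesis to kill every inclusion $D_S\hookrightarrow D_T$ on $\F_2$-homology, and the same Mayer--Vietoris spectral sequence. Your combinatorial count in the last paragraph is correct and agrees with the paper's.

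Where you diverge is in rigor at the collapse step, and you are right to: the paper's lemma only checks $d^1=0$ and then asserts ``therefore the Mayer--Vietoris spectral sequence collapses at the $E^1$ term'' with no treatment of the higher differentials. So your worry that $d^1=0$ alone is insufficient is legitimate, and on this point your proposal is more careful than the published argument.

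That said, your inductive workaround has its own soft spot at ``one then recurses on $V/(D_1\cap V)$''. Splitting off $\redH_*(D_1)$ works exactly as you say: the connecting map for $D_1\to\tilDel_s\to\tilDel_s/D_1$ factors through $D_1\cap V\hookrightarrow D_1$, and that is the inclusion of a mixed $\tilDel$ into its ambient smash, hence null by your hypothesis~(b). But $V/(D_1\cap V)$ is not itself a mixed $\tilDel$-complex, so the induction does not apply to it directly. The obvious route---split the cofiber sequence $D_1\cap V\to V\to V/(D_1\cap V)$---needs $D_1\cap V\hookrightarrow V$ to be null on homology, and here $V=\varepsilon_1\smash\tilDel(\varepsilon_2,\ldots,\varepsilon_m)$ is \emph{not} the ambient smash of $D_1\cap V$, so hypothesis~(b) as stated does not cover it. You acknowledge that closing the induction is ``the delicate point''; concretely, you will need to enlarge the induction package (for instance, to include nullity of any inclusion $\tilDel(\varepsilon)\hookrightarrow Z$ that factors through a reduced diagonal on at least one $A$-block) and verify that this stronger statement is itself reproduced at the inductive step.
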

The condition that the reduced diagonal map of $A$ is mod 2 homologous to zero is quite general. For example, this condition is satisfied if $A$ is the reduced suspension on some space (see Example \ref{eg: weak category}.)

The homology decompositions of Theorems \ref{thm: homology decomp of JGX} and \ref{thm: homology decomp of tilDel} can be applied to compute the mod 2 Betti numbers of $\Omega(X\rtimes_G W_\infty^1 G)$ for certain pointed simplicial $G$-sets $X$. These homology decompositions are particularly effective when the orbit space $X/G$ has many trivial homology groups. This leads to many zero terms appearing in the long-exact sequence \ref{eq: les for tilDel}. We illustrate with the computation of the mod 2 Betti numbers with the following example. Note that the antipodal action on the 2-sphere $S^2$ has the equatorial circle $S^1$ as the fixed set.

\begin{proposition} \label{prop: S2 S1 example}
Consider the $G$-space $S^2\union_{S^1} S^2$ formed by two 2-spheres $S^2$ with the antipodal action, with their equatorial circles identified. The reduced mod 2 Betti numbers of the loop space of its 1-truncated Borel construction is given as follows:
\begin{align*}
&\redh_{n}(\Omega([S^2\union_{S^1} S^2]\rtimes_G E_\infty^1 G)) \\
=\,\,&\begin{cases}
1+\sum_{r=k+1}^{2k} \sum_{J= 1}^{2r-3}\binom{2k-r+J}{J}\binom{2r-2k-J-1}{J-1},  &n=2k, k\ge 1, \\
\sum_{r=k+2}^{2k+1} \sum_{J= 1}^{r-k-1}\binom{2k-r+J+1}{J}\binom{2r-2k-J-2}{J-1}, &n=2k+1, k\ge 0
\end{cases}
\end{align*}
\end{proposition}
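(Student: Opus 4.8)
The plan is to feed this particular $X$ through Theorems \ref{thm: homology decomp of JGX} and \ref{thm: homology decomp of tilDel}. First I would fix a simplicial model in which each copy of $S^2$ is cut by its equatorial $S^1$ into two hemispheres that $G$ interchanges; then the fixed set is $A=S^1$, and quotienting each sphere to a closed hemisphere identifies the orbit space as $X/G=D^2\union_{S^1}D^2\isom S^2$. The inclusion of these hemispheres back into $X$ is a simplicial section of the orbit projection (this $X$ is of the shape described in Proposition \ref{prop: form of decomposable Z2}), so Theorem \ref{thm: homology decomp of JGX} applies; and since $S^1$ is a reduced suspension, its reduced diagonal is mod $2$ homologous to zero by Example \ref{eg: weak category}, so Theorem \ref{thm: homology decomp of tilDel} applies as well.

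Next I would record the only two nonzero reduced Betti numbers in play: $\redh_2(X/G;\F_2)=\redh_2(S^2;\F_2)=1$ and $\redh_1(A;\F_2)=\redh_1(S^1;\F_2)=1$. Hence in Theorem \ref{thm: homology decomp of tilDel} a product $\redh_\lambda(X/G;\F_2)$ is nonzero only for $\lambda=(2,\dots,2)$ and $\redh_\mu(A;\F_2)$ only for $\mu=(1,\dots,1)$, both then equal to $1$. Writing $a=\dim\lambda$ and $b=\dim\mu$, the condition $|\lambda|+|\mu|=t-s+\dim\lambda+\dim\mu+1$ reduces to $a=t-s+1$, the coefficient becomes $c_{\lambda,\mu}=\binom{a+b}{b}\binom{s-a-b-1}{b-1}$, and such a term is nonzero precisely when $a\ge 0$, $b\ge 1$ and $s\ge a+2b$. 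This collapses the formula of Theorem \ref{thm: homology decomp of tilDel} to an explicit double sum of binomials for $\redh_t(\tilDel_s;\F_2)$; in particular $\redh_{2s}(\tilDel_s;\F_2)=0$.

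Then I would plug the cofibre sequence $\tilDel_s\to (X/G)^{\smash s}\to (X/G)^{\smash s}/\tilDel_s$ into \eqref{eq: les for tilDel}. By the K\"unneth theorem $(X/G)^{\smash s}$ has the mod $2$ homology of $S^{2s}$, so the long exact sequence forces $\redH_t((X/G)^{\smash s}/\tilDel_s;\F_2)\isom \redH_{t-1}(\tilDel_s;\F_2)$ for $t\ne 2s$, while in degree $2s$ the vanishing of $\redH_{2s}(\tilDel_s;\F_2)$ makes the relevant map zero and gives $\redh_{2s}((X/G)^{\smash s}/\tilDel_s;\F_2)=1+\redh_{2s-1}(\tilDel_s;\F_2)$. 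Compactly,
\[
\redh_n\big((X/G)^{\smash s}/\tilDel_s;\F_2\big)=\redh_{n-1}(\tilDel_s;\F_2)+\delta_{n,2s},
\]
valid for all $s\ge 1$ (with $\tilDel_1=\pt$).

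Finally, summing over $s\ge 1$ as in \eqref{eq: E1 collapse}: the terms $\delta_{n,2s}$ contribute a single $1$ when $n$ is even and nothing when $n$ is odd, and substituting the double-sum formula for $\redh_{n-1}(\tilDel_s;\F_2)$ and splitting into $n=2k$ and $n=2k+1$ produces the two stated expressions; the inequalities $a\ge 0$ and $s\ge a+2b$ pin the outer summation to $k+1\le s\le 2k$, respectively $k+2\le s\le 2k+1$, and enlarging the inner range of $J$ past its effective bound only appends vanishing binomial coefficients. The main obstacle is the bookkeeping: nailing down the simplicial model so that $A$, $X/G$ and the section come out as claimed, and then verifying that the collapsed double sum is \emph{literally} the one displayed — including the lone leading $1$ in the even case, which is exactly the $\delta_{n,2s}$ contribution at $s=k$.
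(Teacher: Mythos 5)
Your proposal is correct and takes essentially the same route as the paper: identify the hemisphere model $X=Y\cup_{S^1}Yt$ with $Y\isom S^2$ and the section via Proposition \ref{prop: form of decomposable Z2}, invoke Theorems \ref{thm: homology decomp of JGX} and \ref{thm: homology decomp of tilDel} (the latter because $S^1$ is a suspension), collapse the binomial formula using $\redh_2(S^2)=\redh_1(S^1)=1$, push through the long exact sequence for $\tilDel_s\to S^{2s}\to S^{2s}/\tilDel_s$, and sum over $s$. The only cosmetic difference is that you package the LES output compactly as $\redh_n(S^{2s}/\tilDel_s)=\redh_{n-1}(\tilDel_s)+\delta_{n,2s}$ using just $\redH_{2s}(\tilDel_s)=0$, whereas the paper first notes $\redh_n(\tilDel_s)=0$ for $n>2s-3$ and tabulates degrees explicitly; the two give the same double sum.
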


The outline of this paper is as follows. Carlsson's simplicial group construction $J^G[X]$ and the reduced 1-stunted Borel construction are introduced in Section \ref{sec: preliminaries}. In Section \ref{sec: construct E0 models}, the augmentation ideal filtration of the group ring $\F_2(J^G[X])$ is considered. We construct simplicial algebras which are isomorphic to the graded algebra associated to this filtration. Theorem \ref{thm: homology decomp of JGX} is proved in Section \ref{sec: E0 is group ring}. Theorem \ref{thm: homology decomp of tilDel} is proved in Section \ref{sec: MV ss} using the Mayer-Vietoris spectral sequence. Section \ref{sec: eg} is devoted to the example $X=S^2\union_{S^1} S^2t$ and the proof of Proposition \ref{prop: S2 S1 example}.

This paper is a revision of part of the first author's PhD thesis \cite{GaoThesis}.

\section{Preliminaries} \label{sec: preliminaries}

To begin, we explain the concepts of the reduced Borel construction and its 1-stuntation.

Denote by $WG$ any contractible simplicial set with a free $G$-action. Any two such simplicial sets are equivariantly homotopy equivalent. In our case where $G=C_2$, it is standard to give $WG$ concretely as the $\infty$-sphere $S^\infty$ with the antipodal action. Let $EG:=|WG|$ denote the geometric realization of $WG$.

The simplicial set $WG$ is filtered by simplicial $G$-subsets:
\begin{equation} \label{eq: E-filtration}
G \simeq W_0 G \subset W_1G \subset \cdots \subset W_p G \subset \cdots \subset W_\infty G =: WG
\end{equation}
Here $W_p G$ is the $p$-th skeleton of $WG$. In fact $W_p G$ is the $(p+1)$-th fold join of $G$. In our case where $G=C_2$, it is standard to give $W_pG$ concretely as the $p$-sphere $S^p$ with the antipodal action.

The {\textit{bar construction of $G$}} is the orbit space $\bar{W}G:=EG/G$. In fact $\bar{W}G$ is homotopy equivalent to the infinite-dimensional real projective space:
\[
\bar{W}G\simeq \R P^\infty
\]
The {\textit{classifying space of $G$}} is the geometric realization $BG:=|\bar{W}G|$. Since $G$ is discrete, its classifying space $BG$ is the Eilenberg-Mac Lane space $K(G,1)$.

Consider the action of $G$ on a simplicial set $X$. The free simplicial $G$-set associated to $X$ is $X\times WG$ with the diagonal action. The {\textit{Borel construction of $X$}} is the orbit space $X\times_G WG:=(X\times WG)/G$. For example, the Borel construction of the $G$-action on the standard 0-simplex $\Delta[0]=*$ is bar construction of $G$:
\[
\pt\times_G WG\simeq \bar{W}G
\]

Suppose the $G$-action is pointed, that is to say, the simplicial set $X$ has a basepoint and the $G$-action fixes the basepoint. The {\textit{reduced Borel construction}} of this pointed action, written $X\rtimes_G WG$, is the homotopy cofiber of $\pt\times_G WG\to X\times_G WG$.

More generally, let $X\times _G W_p G $ denote the orbit space $(X\times W_pG)/G$ of the diagonal action. For pointed actions, let $X\rtimes_G W_pG$ denote the homotopy cofiber of $\pt\times_G W_pG\to X\times_G W_pG.$ For $q\ge p$, define the {\textit{$(p,q)$-stunted reduced Borel construction}} $X\rtimes_G W_p^q G$ as the homotopy cofiber of $X\rtimes_G W_{q-1} G\to X\rtimes_G W_p G $. In particular, when $p=\infty$, we call $X\rtimes_G W_\infty^q G$ simply as the {\textit{$q$-stunted reduced Borel construction}} of the $G$-action on $X$.

In this paper, we are interested in the 1-stunted Borel construction $X\rtimes_G W_\infty^1 G$. Since $X\rtimes_G W_0 G \simeq X\rtimes_G G \simeq (X\times_G G)/(*\times_G G)\simeq X$, the 1-stunted Borel construction is just the homotopy cofiber of the inclusion $X\into X\rtimes_G W G$. Denote by $|X|\rtimes_G E_\infty^1 G$ the geometric realization of $X\rtimes_G W_\infty^1 G$.
% Give example when $A\to A\smash A$ is homologous to zero.

Carlsson \cite{Carlsson84} constructed a simplicial group $J^G[X]$ whose geometric realization is the loop space of the 1-stunted reduced Borel construction:
\begin{equation} \label{eq: Carlsson's result}
|J^G[X]|\simeq \Loop (|X|\rtimes_G E_\infty^1 G)
\end{equation}
Carlsson's construction is given in the $n$th dimension by:
\begin{equation} \label{eq: Carlsson construction}
J^G[X]_n:=\frac{F[X_n\smash G_n]}{\genby{\forall x\in X_n \forall g,h\in G_n\, (x\smash g)\cdot (xg\smash h)\sim (x\smash gh)}}.
\end{equation}
Here $F[S]=\coker(F(\pt)\to F(S))$ is the reduced free group on a pointed set $S$, where $F(\bullet)$ denotes the (unreduced) free group. The functor $F[\bullet]\co \PtSet\to\Grp$ is the left adjoint to the inclusion functor $\Grp\into\PtSet$ that sends a group to its underlying set with the identity element as basepoint.

Carlsson's construction is the reduced universal simplicial group on the pointed simplicial action groupoid $X//G$:
\[
J^G[X]\isom U[X//G]
\]
For a pointed small groupoid $H$, its {\textit{reduced universal monoid}} $U[H]$ is defined the following cokernel:
\[
U[H] :=\coker(U(\Aut_H(\pt))\to U(H))
\]
Here $\Aut_H(\pt)$ denotes the full subcategory of $H$ whose only object is the basepoint and $H:\Grpd\to \Grp$ is the left adjoint of the inclusion functor $\Grp\into \Grpd$ that sends a group to the corresponding small groupoid with one object. The {\textit{reduced universal simplicial group}} $U[G]$ of a small simplicial groupoid $G$ is defined dimensionwise. Further details can be found in the first author's thesis \cite{GaoThesis}. This categorial viewpoint led to a unification of Carlsson's construction and a simplicial monoid construction of the second author \cite{Wu97}, which contains the classical constructions of Milnor \cite{Milnor72} and James \cite{James55} as special cases. An upcoming paper will further elaborate on this viewpoint.

%---------------------section 2-----------------------------------------------------------------------
\section{Augmentation Quotients as Free Simplicial Modules} \label{sec: construct E0 models}

In this Section, we construct two simplicial algebras each of which is isomorphic to the associated graded algebra of the augmentation ideal filtration of the group ring $\F_2(J^G[X])$ (see Proposition \ref{prop: models of assoc graded}.) In each dimension, each of these simplicial algebras is a quotient of a tensor algebra by a homogeneous ideal. Therefore each augmentation quotient is the reduced simplicial $\F_2$-module of a pointed simplicial set (see Corollary \ref{cor: E0 term as space}.)

In our case where $G=C_2$, there is a natural isomorphism in pointed simplicial $G$-sets $X$:
\begin{equation} \label{eq: form of Carl constru for Z2}
J^G[X]\isom \frac{F[X]}{\genby{\forall x\in X\, (x\cdot xt\sim 1)}} \\
\end{equation}
Recall from the introduction that $F[X]$ is the reduced free group on $X$. Via this natural isomorphism, we will identify $J^G[X]$ with the RHS of \eqref{eq: form of Carl constru for Z2}.

Let $K$ be a field and $H$ be a group. The elements of the group ring $K(H)$ are finite sums of the form $\sum_{\lambda\in K, h\in H} \lambda_h h$. The augmentation map $K(H)\to K$ is generated by $h\mapsto 1$ for $h\in H$. The kernel of this map is the augmentation ideal. Reserve $I$ to denote the augmentation ideal of the group ring $\F_2(J^G[X])$. The augmentation ideal $I$ is generated by $\bar{h}:=h-1$ where $h\in J^G[X]$. The powers of $I$ filter the group ring (Quillen \cite{Quillen68} calls this the $I$-filtration:)
\begin{equation} \label{eq: aug ideal filtration}
\cdots\subseteq I^{s+1}\subseteq I^s\subseteq\cdots\subseteq I^1\subseteq
I^0=\F_2(J^G[X]).
\end{equation}
We denote the spectral sequence by $\{E^r\}$ associated to this filtration. The $E^0$ term is just the graded algebra $\Dirsum_{s=0}^\infty I^{s}/ I^{s+1}$ associated to the above filtration.

One of the simplicial algebras we construct is $A^G[X]$, defined below. For an $\F_2$-module $M$, let $T(M)=\Dirsum_{s=0}^\infty \underbrace{M\tensor_{\F_2} \cdots \tensor_{\F_2} M}_s$ denote the tensor $\F_2$-algebra on $M$.
\begin{definition} \label{def: def of AGX}
Let $X$ be a pointed simplicial $G$-set. The simplicial graded $\F_2$-algebra $A^G[X]$ is defined dimensionwise by
$$(A^G[X])_n:=\frac{T(\F_2[X_n]\tensor_{\F_2(G)} \F_2)}{\genby{\forall a\in A_n\, (a\tensor_{\F_2(G)} 1)^2}}.$$
Here the $G$-action on $X_n$ allows us to view $\F_2[X_n]$ as a right $\F_2(G)$-module, while $\F_2$ is viewed as an $\F_2(G)$-module where $G$ acts trivially on the left. The tensor product $\F_2[X_n]\tensor_{\F_2(G)} \F_2$ is viewed as an $\F_2$-module.
\end{definition}

\begin{proposition}\label{prop: aug ideal is gen by x bars}
The augmentation quotient $I/I^2$ is generated by $\{\bar{x} + I^2|\, x\neq \pt\}.$
\end{proposition}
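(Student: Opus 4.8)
The plan is to reduce the statement to the classical description of the augmentation ideal of a group ring, carried out in each simplicial dimension, and then to invoke the fact that Carlsson's construction is generated, dimensionwise, by the non-basepoint simplices of $X$. Everything below is pointwise in a fixed dimension $n$, so write $H:=J^G[X]_n$, let $I$ denote the augmentation ideal of the ordinary group ring $\F_2(H)$, and write $\bar h:=h-1$ for $h\in H$. First I would record two elementary facts. Since any element of $I$ is a sum $\sum_h\lambda_h h$ with $\sum_h\lambda_h=0$, it equals $\sum_h\lambda_h(h-1)$, so $I$ is spanned as an $\F_2$-vector space by $\{\bar h\mid h\in H\}$. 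Second, from $gh-1=(g-1)+(h-1)+(g-1)(h-1)$ we get $\overline{gh}\equiv\bar g+\bar h\pmod{I^2}$, and from $g^{-1}-1=-g^{-1}(g-1)$ we get $\overline{g^{-1}}\equiv-\bar g\pmod{I^2}$ (of course all signs are trivial in characteristic $2$).

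Combining these two facts: if $S\subseteq H$ is any generating set of the group $H$, then $\{\bar s+I^2\mid s\in S\}$ spans $I/I^2$ as an $\F_2$-module. Indeed, an arbitrary $\bar h$ with $h$ expressed as a word in the elements of $S^{\pm1}$ is, modulo $I^2$, a finite $\F_2$-linear combination of the $\bar s$ by the two congruences above, and the $\bar h$ already span $I$.

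It remains to produce such a generating set $S$ of $H=J^G[X]_n$ consisting of non-basepoint simplices. By the natural isomorphism \eqref{eq: form of Carl constru for Z2}, $J^G[X]_n$ is a quotient of the reduced free group $F[X_n]$, which by the definition of the functor $F[\bullet]$ is the free group on the pointed set $X_n$ with the basepoint $\pt$ mapped to the identity element; hence $F[X_n]$, and therefore its quotient $H$, is generated by the image of $X_n\setminus\{\pt\}$. Taking $S$ to be this image and applying the previous paragraph shows that $\{\bar x+I^2\mid x\in X_n,\ x\neq\pt\}$ spans $I/I^2$ in dimension $n$; letting $n$ vary over all dimensions gives the proposition.

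I do not expect a genuine obstacle here; the argument is essentially the standard computation of $I/I^2$ for a group ring. The only point deserving a line of care is the passage to the \emph{reduced} free group, i.e. the fact that the basepoint simplices contribute nothing and may be discarded: this holds because $\pt$ maps to $1\in H$, so $\bar\pt=1-1=0$ already in $I$ itself.
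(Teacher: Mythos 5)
Your proof is correct and follows essentially the same route as the paper: reduce $I/I^2$ to the group generators via the congruence $\overline{gh}\equiv\bar g+\bar h\pmod{I^2}$, then observe that $J^G[X]_n$ is generated by non-basepoint simplices because it is a quotient of the reduced free group on $X_n$. The only cosmetic difference is that you also invoke $\overline{g^{-1}}\equiv-\bar g\pmod{I^2}$, whereas the paper sidesteps inverses by noting that in the $C_2$ case $x^{-1}=xt$ is itself a non-basepoint simplex, so every nonidentity element is already a positive word in $X_n\setminus\{\pt\}$.
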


\begin{proof}
The identity $\bar{x} \thinspace\bar{y}=\bar{xy}-\bar{x}-\bar{y}$ and the fact that $\bar{x} \thinspace\bar{y}\in I^2$ implies that $(\bar{x} + I^2)+(\bar{y}+I^2)=\bar{xy}+I^2$.

The set $S:=\{\bar{x} + I^2|\, x\neq \pt\}$ thus generates
\begin{equation} \label{eq: temp eq}
\{\bar{x_1x_2\cdots x_n}+I^2|\, x_1\neq \pt,\ldots x_n\neq \pt \}
\end{equation}
By \eqref{eq: form of Carl constru for Z2}, each nonidentity element of $J^G[X]$ is (the equivalence class of) a word of the form $x_1\cdots x_n$ where all $x_1,\ldots, x_n$ are different from the basepoint $\pt$, hence $I/I^2$ is generated by \eqref{eq: temp eq} and thus also by $S$. This completes the proof.
\end{proof}

\begin{proposition} \label{prop: E^0 of completed algebra}
Let $B$ be a graded algebra. Let $\hat{B}$ be the completion of $B$ with respect to the filtration by degree:
$$\cdots\subset  B_{\ge r}\subset \cdots \subset B_{\ge 1}\subset B_{\ge 0}=B,$$
where $B_{\ge r}:=\Dirsum_{i=r}^\infty B_i$.
The above filtration induces a filtration on $\hat{B}$
$$\cdots\subset  \hat{B}_{\ge r}\subset \cdots \subset \hat{B}_{\ge 1}\subset \hat{B}_{\ge 0}=\hat{B}.$$
Then the map $\Theta\co E^0(\hat{B})\to B$ whose $r$th grade is given by $\Theta_r(f+\hat{B}_{\ge r+1})=f_r$, where $f_r$ is the $r$th degree component of $f$, is an isomorphism of graded algebras.
\end{proposition}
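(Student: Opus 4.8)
The plan is to make the completion $\hat B$ completely concrete and then check that $\Theta$ is a degreewise isomorphism of modules which also happens to respect products. First I would record the identification of $\hat B$: since $B$ is graded by $\N$ and the quotients of the degree filtration are $B_{\ge r}/B_{\ge r+1}\isom B_r$ with $B/B_{\ge r}\isom\Dirsum_{i<r}B_i$, the completion is the product $\hat B=\prod_{i\ge 0}B_i$, an element being a formal series $f=\sum_{i\ge 0}f_i$ with $f_i\in B_i$. Multiplication is the convolution
\[
\Bigl(\textstyle\sum_i f_i\Bigr)\Bigl(\textstyle\sum_j g_j\Bigr)=\sum_k\Bigl(\sum_{i+j=k}f_i g_j\Bigr),
\]
which is well defined because for each $k$ only finitely many pairs $(i,j)$ contribute. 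Under this identification the induced filtration is $\hat B_{\ge r}=\prod_{i\ge r}B_i=\{f:\ f_i=0\text{ for }i<r\}$, and it is visibly multiplicative, $\hat B_{\ge r}\cdot\hat B_{\ge s}\subseteq\hat B_{\ge r+s}$, so $E^0(\hat B)$ is a graded algebra.

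Next I would analyse each graded piece $\Theta_r$. The assignment $f\mapsto f_r$ is a well-defined linear map $\hat B_{\ge r}\to B_r$; it is surjective because the series concentrated in degree $r$ with value $b\in B_r$ maps to $b$, and its kernel is exactly $\{f\in\hat B_{\ge r}:\ f_r=0\}=\hat B_{\ge r+1}$. Hence $\Theta_r\co\hat B_{\ge r}/\hat B_{\ge r+1}\to B_r$ is an isomorphism of modules, and therefore $\Theta=\Dirsum_r\Theta_r$ is an isomorphism of graded modules. It remains only to see that $\Theta$ is an algebra map. Take classes $\xi=f+\hat B_{\ge r+1}$ and $\eta=g+\hat B_{\ge s+1}$ with $f\in\hat B_{\ge r}$, $g\in\hat B_{\ge s}$; by multiplicativity of the filtration, $fg\in\hat B_{\ge r+s}$ and the product $\xi\eta$ in $E^0(\hat B)$ is the class of $fg$, so $\Theta(\xi\eta)=(fg)_{r+s}=\sum_{i+j=r+s}f_i g_j$. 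Since $f_i=0$ for $i<r$ and $g_j=0$ for $j<s$, the only surviving summand is $i=r,\ j=s$, giving $(fg)_{r+s}=f_r g_s=\Theta(\xi)\Theta(\eta)$; the unit $1\in B_0$ of $\hat B$ maps to $1\in B_0$, so $\Theta$ is unital as well.

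I do not expect a genuine obstacle here: the entire content is the bookkeeping around the completion — chiefly that the convolution product on $\hat B$ is well defined and that $\hat B_{\ge r}$ is correctly identified as $\prod_{i\ge r}B_i$ — together with the one-line observation that, in total degree $r+s$, the product of an element of filtration level $r$ with one of filtration level $s$ has a single nonzero homogeneous term. Once those points are in place the argument is a direct computation.
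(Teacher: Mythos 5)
Your argument is correct and matches the paper's in substance: both identify $\hat B$ with formal power series (equivalently $\prod_i B_i$) and read off that $\Theta_r$ is well-defined with kernel $\hat B_{\ge r+1}$. The only cosmetic difference is that the paper exhibits the explicit inverse $\Lambda_r(b)=b+\hat B_{\ge r+1}$ and leaves the checks to the reader, while you verify that $\Theta_r$ is a degreewise bijection and spell out multiplicativity via the convolution formula; the content is the same.
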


\begin{proof}
An element of $\hat{B}$ is a formal power series of the form $f=f_0 + f_1+ \cdots f_i + \cdots $ where $f_i$ is of degree $i$ in $B$. An element of $\hat{B}_{\ge r}$ is a formal power series of the form $f=f_r + f_{r+1} +\cdots $ whose lowest degree is at least $r$. The map $\Theta_r$ is well-defined since if $f\in\hat{B}_{\ge r+1}$, then $f_r=0$.

Let $\Lambda\co B\to E^0(\hat{B})$ be the map whose $r$th grade is $\Lambda_r(f)=f+\hat{B}_{\ge r+1}$. It is easy to check that $\Theta$ and $\Lambda$ are inverses.
\end{proof}

\begin{lemma} \label{lem: alg model of E^0 for aug ideal}
There is an isomorphism of graded algebras natural in $X$:
\begin{align*}
\Phi\co A^G[X]     &\to    E^0 \\
x\tensor_{\F_2(G)} 1 &\mapsto \bar{x} + I^2
\end{align*}
\end{lemma}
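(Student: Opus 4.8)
The plan is to construct $\Phi$ as a map of simplicial graded algebras and then check it is a dimensionwise isomorphism, the latter reducing to a statement about group rings of (discrete) reduced free groups modulo the relations $x\cdot xt\sim 1$. First I would produce the map. In each dimension $n$, the assignment $x\mapsto \bar x+I^2$ on the generating set $X_n\setminus\{\pt\}$ extends to an $\F_2(G)$-balanced linear map $\F_2[X_n]\to I/I^2$, because $\overline{xt}+I^2=\bar x+I^2$ (since $\bar x-\overline{xt}=\bar x+\overline{xt}-\overline{x\cdot xt}=\overline{x}\,\overline{xt}\in I^2$ using $x\cdot xt=1$), so it factors through $\F_2[X_n]\tensor_{\F_2(G)}\F_2$. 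By the universal property of the tensor algebra this induces an algebra map $T(\F_2[X_n]\tensor_{\F_2(G)}\F_2)\to E^0_n=\Dirsum_s I^s/I^{s+1}$ landing in the appropriate graded pieces. One must check the relators $(a\tensor_{\F_2(G)}1)^2$ for $a\in A_n$ are killed: $(a\tensor 1)^2\mapsto (\bar a+I^2)^2$, which in the degree-$2$ piece $I^2/I^3$ equals $\overline{a}\,\overline{a}+I^3$; but $a\in A_n$ means $at=a$, so $a\cdot at=a^2=1$ in $J^G[X]_n$, whence $\bar a\,\bar a=\overline{a^2}-\bar a-\bar a=-2\bar a=0$ in characteristic $2$. (Indeed $\overline{a^2}=0$ since $a^2=1$.) Thus $\Phi_n\co (A^G[X])_n\to E^0_n$ is well-defined; naturality in $X$ and compatibility with faces and degeneracies are immediate from the formula $x\tensor 1\mapsto \bar x+I^2$, so $\Phi$ is a map of simplicial graded algebras.

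Surjectivity of each $\Phi_n$ is Proposition \ref{prop: aug ideal is gen by x bars} in degree $1$, and in higher degrees follows because $E^0$ is generated as an algebra by its degree-$1$ part $I/I^2$ (the identity $\bar x\,\bar y=\overline{xy}-\bar x-\bar y$ shows products of $\bar x$'s span $I^s$ modulo lower-filtration, hence span $I^s/I^{s+1}$), and $\Phi_n$ hits all of degree $1$. So the content is injectivity. Here the key tool is Proposition \ref{prop: E^0 of completed algebra}: it identifies $E^0$ of the $I$-adic filtration of a group ring with the associated graded of the group ring when the group ring, suitably, "is" a completion of its own associated graded. More precisely, I would argue as follows. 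Write $R=\F_2(J^G[X]_n)$. Quillen's description of the $I$-adic associated graded of a group ring identifies $\Dirsum_s I^s/I^{s+1}$ with (a quotient of) the universal enveloping construction on the graded Lie algebra of the group; but we can stay much more elementary. Since $J^G[X]_n$ is a quotient of a free group by the relations $x\cdot xt=1$, and since over $\F_2$ the relation $\bar x+\overline{xt}=\overline{x}\,\overline{xt}$ already trivializes these relators in the associated graded, one expects $E^0$ to be exactly the free associative $\F_2$-algebra on the set $\{\bar x : x\in (X_n/G)\setminus\{\pt\}\}$ modulo the squares $\bar a^2=0$ for $a\in A_n$ — which is precisely $(A^G[X])_n$. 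I would make this rigorous by exhibiting an explicit $\F_2$-basis: a PBW-type / Hall-basis argument, or the Magnus-embedding argument (embed the group ring into a power series ring) showing that the monomials $\bar x_{i_1}\cdots \bar x_{i_s}$, with the obvious normal form coming from the defining relations of $J^G[X]_n$ and the squaring relations on $A_n$-generators, are linearly independent modulo $I^{s+1}$.

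The main obstacle I anticipate is this last linear-independence statement — proving that no unexpected relations appear in $E^0$ beyond those forced by $a^2=1$ for $a\in A_n$. The cleanest route is to compute dimensions on both sides: $(A^G[X])_n$ has an evident monomial $\F_2$-basis (words in the generators with no two consecutive equal $A_n$-generators, or rather with squares of $A_n$-generators deleted), so it suffices to bound $\dim_{\F_2} I^s/I^{s+1}$ from above by the number of such words of length $s$, forcing $\Phi_n$ to be an isomorphism once surjectivity is known. Such an upper bound comes from Proposition \ref{prop: E^0 of completed algebra} together with the observation that $\F_2(J^G[X]_n)$ is a homomorphic image of a completed tensor algebra: the Magnus-type map sending $x\mapsto 1+\bar x$ realizes $R$ as a quotient of $\hat{B}$ where $B=T(\F_2\langle X_n/G\rangle)/\langle \bar a^2\rangle=(A^G[X])_n$, and Proposition \ref{prop: E^0 of completed algebra} then gives $E^0(\hat B)\cong B$, while a quotient can only make the associated graded smaller — giving $\dim I^s/I^{s+1}\le \dim B_s$ and closing the argument. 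So the proof assembles as: (1) build $\Phi$ and check well-definedness and simpliciality; (2) surjectivity via Proposition \ref{prop: aug ideal is gen by x bars}; (3) injectivity via a dimension count powered by Proposition \ref{prop: E^0 of completed algebra} and the Magnus embedding; (4) naturality, already visible from the formula. Step (3) is where essentially all the work lies.
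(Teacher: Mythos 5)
Your construction of $\Phi$, the well-definedness check, the surjectivity argument via Proposition \ref{prop: aug ideal is gen by x bars}, and the naturality claim all match the paper and are fine. You also correctly identify that the real content is injectivity and that Proposition \ref{prop: E^0 of completed algebra} together with a Magnus-type map is the right tool. But the concrete injectivity argument you propose has a genuine gap, on two counts.

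First, the map you describe does not exist. A surjection $\hat{B}\to R$ with $R=\F_2(J^G[X]_n)$ cannot be built from $x\mapsto 1+\bar x$: if $\bar x$ means $x\tensor 1\in B=(A^G[X])_n$, then well-definedness on the group $J^G[X]_n$ requires $(1+x\tensor 1)(1+xt\tensor 1)=1$, and since $x\tensor 1=xt\tensor 1$ in $B$ this forces $(x\tensor 1)^2=0$, which holds \emph{only} for $x\in A_n$ (that is the whole point of the relator $(a\tensor 1)^2$). For $x\notin A_n$ the relation $x\cdot xt=1$ is not preserved. Conversely, if $\bar x$ means $x-1\in R$, the assignment $xG\mapsto \bar x$ is not $\F_2(G)$-balanced ($\bar x\neq\overline{xt}$ in $R$), and even after choosing representatives the map $B\to R$ is not surjective and does not extend to $\hat B\to R$ because infinite sums from $\hat B$ have no image in $R$ (which is not $I$-adically complete). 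Second, even granting such a surjection, the dimension count is logically backwards: surjectivity of $\Phi_n$ already gives $\dim I^s/I^{s+1}\le\dim B_s$ for free, and producing a second surjection giving the \emph{same} inequality does not prove injectivity. What you need is a \emph{left inverse} to $\Phi_n$, i.e.\ a map going out of $E^0$ and landing back in $B$ whose composite with $\Phi_n$ is the identity.

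The paper's fix supplies exactly this and is the idea you are missing: in each simplicial degree choose a transversal $B\subset X_n$ so that $X_n=A_n\sqcup B\sqcup Bt$, use the resulting group isomorphism $J^G[X]_n\cong J^G[A]_n\ast F(B)$, and define a ring map $\F_2(J^G[X]_n)\to\hat{A^G[X]}_n$ by sending $a\mapsto 1+a\tensor 1$ on the $J^G[A]$ factor (legitimate precisely because $(a\tensor 1)^2=0$) and freely sending $b\mapsto 1+b\tensor 1$, hence $bt=b^{-1}\mapsto\sum_{i\ge 0}(b\tensor 1)^i$, on the $F(B)$ factor. Passing to associated graded and composing with $\Theta$ from Proposition \ref{prop: E^0 of completed algebra} gives a map $E^0\to A^G[X]$ whose composite with $\Phi$ is the identity, so $\Phi$ is a monomorphism. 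The choice of transversal and the free-product decomposition are what make the Magnus-type map well defined; without them the naive $x\mapsto 1+\bar x$ fails.
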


\begin{proof}
Write $\tensor:=\tensor_{\F_2(G)}$for short.

We first verify that $\Phi$ is well-defined, that is to say, that the map $(x,1)\mapsto\bar{x}+I^2$ is indeed $\F_2(G)$-linear. On the one hand, $(x\cdot t,1)\mapsto \bar{xt}+I^2$, and on the other hand $(x,t\cdot 1)=(x,1)\mapsto \bar{x}+I^2$. Since $x\cdot xt = 1$,
$$\bar{x}\bar{xt}+\bar{x}+\bar{xt}=(x-1)(xt-1)+(x-1)+(xt-1)=x\cdot xt - 1=0.$$
This implies $\bar{x}+\bar{xt}=-\bar{x}\bar{xt}\in I^2$ so that $\bar{x}+I^2=-\bar{xt}+ I^2=\bar{xt}+I^2$ as the ground field is $\F_2$. Therefore both $(x\cdot t,1)$ and $(x,t\cdot 1)$ are sent to the same thing which verifies the $\F_2(G)$-linearity of the map $(x,1)\mapsto\bar{x}+I^2$.

Our definition $\Phi(x\tensor 1)=\bar{x}+I^2$ is given for $x\in X$, then it can be extended to a map $T(\F_2[X_n]\tensor_{\F_2(G)} \F_2)\to E^0$. This is because $\Phi(\pt\tensor 1)=\bar{\pt}+I^2=1-1+I^2=0$ and the tensor algebra $T(\F_2[X_n]\tensor_{\F_2(G)} \F_2)$ is generated by elements of the form $x\tensor 1$. We check that this map factors through the defining equivalence relation of $A^G[X]$. Given $a\in A$, we have $\Phi((a\tensor 1)^2)=(\bar{a}+ I^2)^2 = \bar{a}^2+I^3$. And $\bar{a}^2=(a-1)^2=a^2-1=0$ since $a\in A$ implies $a^2=a\cdot at=1$. Thus $\Phi((a\tensor 1)^2)=\bar{a}^2+I^3=0$, so we have a well-defined map $\Phi\co A^G[X]\to E^0$.

Next we show that $\Phi$ is an epimorphism. It suffices to show that, when $\Phi$ is restricted to the first grade, the map $\Phi_1\co A^G_1[X]\to I/I^2$ is an epimorphism, since $I/I^2$ generates $E^0$. Using the isomorphism in \eqref{eq: form of Carl constru for Z2}, Proposition \ref{prop: aug ideal is gen by x bars} implies that the augmentation ideal is generated by $\bar{x}$ where $\pt\neq x\in X$. Since, for each $x\in X$, the element $\bar{x}+I^2$ is the image of $\Phi(x\tensor 1)$, this Proposition implies that each element in $I/I^2$ has a preimage under $\Phi$. Therefore $\Phi$ is an epimorphism.

To show that $\Phi$ is a monomorphism, choose a subset $B\subset X$ of elements not fixed by the $G$-action that decomposes $X$ into the disjoint union $A\sqcup B\sqcup Bt$. Then the map $f\co J^G[X]\to J^G[A]\ast F(B)$ that sends $a\mapsto a$ for $a\in A$ and $b\mapsto b, bt\mapsto b^{-1}$ for $b\in B$ is a group isomorphism. The map $e_1\co J^G[A]\to \hat{A^G[X]}$ generated by $a\mapsto a\tensor 1 + 1\tensor 1$ is well-defined. This is because $e_1(a\cdot a)= (a\tensor 1 + 1\tensor 1)(a\tensor 1 + 1\tensor 1)=(a\tensor 1)(a\tensor 1)+(1\tensor 1)(1\tensor 1)=(1\tensor 1)$ agrees with $e_1(1)=1\tensor 1$. Define $e_2\co F(B)\to \hat{A^G[X]}$ by sending $b\mapsto b\tensor 1 + 1\tensor 1$. In particular, $e_2(b^{-1})=\frac{1}{ b\tensor 1 + 1\tensor 1}=\sum_{i=0}^\infty (-1)^{i}(b\tensor 1)^i=\sum_{i=0}^\infty (b\tensor 1)^i$ as the ground field is $\F_2$. The universal property of the free product gives a map $e_1\ast e_2\co J^G[A]\ast F(B)\to \hat{A^G[X]}$. The universal property of the group ring then induces a map $\tilde{e_1\ast e_2}\co \F_2(J^G[X])\to \hat{A^G[X]}$. This induces a map $E^0(\tilde{e_1\ast e_2})\co E^0(\F_2(J^G[A]\ast F(B)))\to E^0(\hat{A^G[X]})$ between the associated graded algebras. Consider the composite
\begin{multline*}
A^G[X]\xrightarrow{\Phi}E^0(\F_2(J^G[X]))\xrightarrow{E^0(\F_2(f))}E^0(\F_2(J^G[A]\ast F(B))) \\
\xrightarrow{E^0(\tilde{e_1\ast e_2})}E^0(\hat{A^G[X]})\xrightarrow{\Theta}A^G[X].
\end{multline*}
Here $\Theta$ is the map given in Proposition \ref{prop: E^0 of completed algebra}. It is easy to check that this composite is the identity map on $A^G[X]$ and hence the first map $\Phi$ is a monomorphism, as required.

Finally, it is straightforward to the check the naturality. This completes the proof.
\end{proof}

\begin{proposition} \label{prop: models of assoc graded}
There are isomorphisms of simplicial graded $\F_2$-algebras:
\[
\Dirsum_{s=0}^\infty I^{s}/ I^{s+1} \isom A^G[X] \isom T(\F_2[X/G])/\genby{\forall a\in A\, (aG)^2}
\]
\end{proposition}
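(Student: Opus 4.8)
The plan is to prove the two isomorphisms one at a time and then observe that each is natural in the pointed simplicial $G$-set $X$, so that both automatically commute with the face and degeneracy operators and hence are isomorphisms of simplicial graded $\F_2$-algebras.

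The first isomorphism $\Dirsum_{s=0}^\infty I^{s}/I^{s+1}\isom A^G[X]$ is essentially already in hand: by construction the left-hand side is exactly the $E^0$-term, so this is precisely the map $\Phi$ of Lemma \ref{lem: alg model of E^0 for aug ideal}. I only need to record two things. First, that the $I$-adic grading on the left corresponds to the tensor-degree grading on $A^G[X]$; this holds because $\Phi$ carries the degree-one generators $x\tensor_{\F_2(G)}1$ into $I/I^2$, and both algebras are generated in degree one. Second, that since $\Phi$ is natural in $X$ (as noted at the end of the proof of that Lemma), the dimensionwise isomorphisms assemble into an isomorphism of simplicial objects.

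For the second isomorphism the one algebraic input I would use is the standard description of coinvariants: if $S$ is a pointed $G$-set with $G$ fixing the basepoint, there is a natural isomorphism of $\F_2$-modules $\F_2[S]\tensor_{\F_2(G)}\F_2\isom \F_2[S/G]$ sending $s\tensor_{\F_2(G)}1$ to $sG$, well-definedness of the inverse using that $st\tensor_{\F_2(G)}1=s\tensor_{\F_2(G)}t\cdot 1=s\tensor_{\F_2(G)}1$. Applying this with $S=X_n$, and using that the tensor-algebra functor $T(-)$ takes a natural isomorphism of $\F_2$-modules to a natural isomorphism of graded $\F_2$-algebras, gives $T(\F_2[X_n]\tensor_{\F_2(G)}\F_2)\isom T(\F_2[X_n/G])$. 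Under this isomorphism $(a\tensor_{\F_2(G)}1)^2$ corresponds to $(aG)^2$ for every $a\in A_n$, so the defining two-sided ideal of $(A^G[X])_n$ in Definition \ref{def: def of AGX} is carried isomorphically onto the two-sided ideal generated by $\{(aG)^2:a\in A_n\}$. Passing to quotients and letting $n$ vary — all maps being natural in $X$ — yields $A^G[X]\isom T(\F_2[X/G])/\genby{\forall a\in A\,(aG)^2}$ as simplicial graded $\F_2$-algebras.

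I do not expect a genuine obstacle: the substance is Lemma \ref{lem: alg model of E^0 for aug ideal} together with the coinvariants identification. The points that require care are purely bookkeeping ones — that the two notions of grading correspond ($I$-adic degree versus tensor degree), that the reduced-module conventions (basepoint sent to $0$) agree on both sides, that $X_n/G$ is identified with $(X/G)_n$, and that every map in sight is natural in $X$ so that the dimensionwise isomorphisms glue into simplicial ones.
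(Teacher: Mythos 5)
Your proposal is correct and takes essentially the same approach as the paper: the first isomorphism is Lemma \ref{lem: alg model of E^0 for aug ideal} assembled dimensionwise via naturality, and the second is the coinvariants identification $\F_2[X_n]\tensor_{\F_2(G)}\F_2\isom\F_2[X_n/G]$ pushed through the functor $T(-)$ together with the observation that the defining ideals correspond. The paper writes out the two mutually inverse maps $\tilde{\phi}$ and $\tilde{\psi}$ explicitly rather than invoking functoriality of $T$, but this is presentation rather than substance.
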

Here $T(\F_2[X/G])/\genby{\forall a\in A\, (aG)^2}$ is the simplicial graded $\F_2$-algebra whose $n$th dimension is $T(\F_2[X_n/G])/\genby{\forall a\in A_n\, (aG)^2}$.

\begin{proof}
Define the map $\Phi\co A^G[X]\to E^0(\F_2(J^G[X]))$ dimensionwise using the previous Lemma \ref{lem: alg model of E^0 for aug ideal}. In each dimension $n$, the map $\Phi_n$ is an isomorphism of graded algebras. But the naturality part of the same Lemma implies that the map $\Phi$ commutes with faces and degeneracies and hence it is a simplicial map. Therefore $\Phi$ is an isomorphism of simplicial algebras:
\begin{equation} \label{eq: temp part1 of aug ideal of E^0}
\Phi\co A^G[X]\to E^0(\F_2(J^G[X])).
\end{equation}

Denote the algebra $T(\F_2[X/G])/\genby{\forall a\in A\, (aG)^2}$ by $T$. Let $\phi\co \F_2[X]\times \F_2\to T$ send $(x, 1)\mapsto xG$. Since $\phi(x\cdot t,1)=xtG=xG$ agrees with $\phi(x,t\cdot 1)=\phi(x,1)=xG$, this map factors to a map $\F_2[X]\tensor_{\F_2(G)} \F_2\to T$ from the tensor product. The universal property of the tensor algebra defines a map $T(\F_2[X]\tensor_{\F_2(G)} \F_2)\to T$. We check that this map factors through the defining equivalence relations of $A^G[X]$. Given $a\in A$, indeed $(a\tensor 1)^2$ is sent to $(aG)^2$, which is in the quotient ideal of $T$. Thus we have a map $\tilde{\phi}\co A^G[X]\to T$.

Let $\psi\co X/G\to A^G[X]$ send $xG\mapsto x\tensor_{\F_2(G)}1$. This map $\psi$ is well-defined since $xt\tensor_{\F_2(G)} 1 = x\tensor_{\F_2(G)} t\cdot 1=x\tensor_{\F_2(G)} 1$. The universal property of the tensor algebra defines a map $\psi\co T(\F_2[X/G])\mapsto A^G[X]$. We check that this map factors through the defining equivalence relations of $T$. Given $a\in A$, indeed $(aG)^2$ is sent to $(a\tensor_{\F_2(G)}1)^2$, which is in the quotient ideal of $A^G[X]$. Thus we have a map $\tilde{\psi}\co T\to A^G[X]$.

It is easy to check that $\tilde{\phi}$ and $\tilde{\psi}$ are inverses. This gives an isomorphism
\begin{equation} \label{eq: temp part2 of aug ideal of E^0}
A^G[X]\isom T=T(\F_2[X/G])/\genby{\forall a\in A\, (aG)^2}.
\end{equation}
Combine the isomorphisms \eqref{eq: temp part1 of aug ideal of E^0} and \eqref{eq: temp part2 of aug ideal of E^0} to complete the proof.
\end{proof}

Recall from the introduction that the pointed simplicial subset $\tilDel_s$ of $(X/G)^{\smash s}$ is defined as follows. Set $\tilDel_0=\tilDel_1:=\pt$ and:
\[
\tilDel_s:=\{x_1G\smash \cdots \smash x_s G\in (X/G)^{\smash s}|\, \exists i=1,\ldots, s-1\, (x_i=x_{i+1}\in A)\} \quad (s\ge 2.)
\]

\begin{corollary} \label{cor: E0 term as space}
For $s\ge 1$, there is an isomorphism of simplicial $\F_2$-modules:
\begin{align*}
I^s/I^{s+1} &\xrightarrow{\isom} \F_2\left[(X/G)^{\smash s}/\tilDel_s\right] \\
\bar{x_1}\cdots \bar{x_s}+I^{s+1} &\mapsto x_1G\smash \cdots\smash x_sG
\end{align*}
\end{corollary}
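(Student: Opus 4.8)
The plan is to deduce this from Proposition \ref{prop: models of assoc graded} by examining the degree-$s$ homogeneous components of the three isomorphic simplicial graded $\F_2$-algebras appearing there. Since the isomorphism $\Dirsum_{s\ge 0} I^s/I^{s+1}\isom T(\F_2[X/G])/\genby{\forall a\in A\,(aG)^2}$ of Proposition \ref{prop: models of assoc graded} is grading-preserving and simplicial, restricting it to grade $s$ already yields an isomorphism of simplicial $\F_2$-modules between $I^s/I^{s+1}$ and the grade-$s$ part of $T(\F_2[X/G])/\genby{\forall a\in A\,(aG)^2}$. It therefore remains to identify this grade-$s$ part with $\F_2[(X/G)^{\smash s}/\tilDel_s]$ and to track the image of the element $\bar{x_1}\cdots\bar{x_s}+I^{s+1}$.

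First I would recall that the grade-$s$ component of a tensor algebra $T(M)$ is $M^{\tensor s}$, and that the reduced free $\F_2$-module functor is monoidal, giving a natural isomorphism $\F_2[Y]^{\tensor s}\isom \F_2[Y^{\smash s}]$ of $\F_2$-modules sending a pure tensor $y_1\tensor\cdots\tensor y_s$ to the smash $y_1\smash\cdots\smash y_s$; applied to $Y=X/G$ this identifies the grade-$s$ component of $T(\F_2[X/G])$ with $\F_2[(X/G)^{\smash s}]$, which has basis the monomials $x_1G\smash\cdots\smash x_sG$ with all entries different from the basepoint. Because the ideal $\genby{\forall a\in A\,(aG)^2}$ is generated by homogeneous elements of degree $2$, the quotient is graded and its grade-$s$ part is the quotient of $\F_2[(X/G)^{\smash s}]$ by the grade-$s$ component of that ideal; the latter is spanned by the elements $u\cdot (aG)^2\cdot v$ with $a\in A$ and $u,v$ monomials in $\F_2[X/G]$ of complementary degrees, and each such element is a sum of monomials $x_1G\smash\cdots\smash x_sG$ in which $x_iG=x_{i+1}G=aG$ for some $i$; conversely any monomial of this shape lies in the ideal.

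The one point requiring care --- and the main, if modest, obstacle --- is matching this span precisely with the submodule $\F_2[\tilDel_s]\subseteq\F_2[(X/G)^{\smash s}]$. The key observation is that for $a\in A$ the equality $xG=aG$ in $X/G$ forces $x\in\{a,at\}=\{a\}$, since $a$ is $G$-fixed; hence $x_iG=x_{i+1}G=aG$ with $a\in A$ is equivalent to $x_i=x_{i+1}=a\in A$. Consequently the monomial basis of the grade-$s$ component of the ideal is exactly $\{x_1G\smash\cdots\smash x_sG \mid \exists i\ (x_i=x_{i+1}\in A)\}$, which by definition is the set $\tilDel_s$ for $s\ge 2$; for $s=1$ the ideal component is $0$, and $\tilDel_1=\pt$, consistent with $(X/G)^{\smash 1}/\pt=X/G$ and $\F_2[\pt]=0$. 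Since $\tilDel_s$ is a pointed simplicial subset of $(X/G)^{\smash s}$ (as recalled above), quotienting gives $\F_2[(X/G)^{\smash s}]/\F_2[\tilDel_s]\isom\F_2[(X/G)^{\smash s}/\tilDel_s]$, and combining with the previous paragraph yields the desired isomorphism of simplicial $\F_2$-modules $I^s/I^{s+1}\isom\F_2[(X/G)^{\smash s}/\tilDel_s]$.

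Finally I would verify the stated formula for the map. Under the isomorphisms of Lemma \ref{lem: alg model of E^0 for aug ideal} and Proposition \ref{prop: models of assoc graded}, the degree-$1$ generator $x\tensor_{\F_2(G)}1$ of $A^G[X]$ corresponds to $\bar{x}+I^2\in I/I^2$ and to $xG\in\F_2[X/G]$; taking products in the respective graded algebras, $\bar{x_1}\cdots\bar{x_s}+I^{s+1}$ corresponds to $(x_1\tensor_{\F_2(G)}1)\cdots(x_s\tensor_{\F_2(G)}1)$ and thence to the class of the monomial $x_1G\smash\cdots\smash x_sG$, whose image in $\F_2[(X/G)^{\smash s}/\tilDel_s]$ under the identification above is precisely the element recorded in the statement. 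This completes the proof.
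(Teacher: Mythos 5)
Your proof is correct and follows essentially the same route as the paper's: restrict the graded isomorphism of Proposition \ref{prop: models of assoc graded} to grade $s$, identify the grade-$s$ component of the tensor algebra with $\F_2[(X/G)^{\smash s}]$, and identify the grade-$s$ component of the ideal $\genby{\forall a\in A\,(aG)^2}$ with $\F_2[\tilDel_s]$. You fill in a few details that the paper leaves implicit (the observation that $xG=aG$ with $a\in A$ forces $x=a$, and the $s=1$ boundary case), but the argument is the same one.
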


\begin{proof}
The proof of Proposition \ref{prop: models of assoc graded} gives an isomorphism of simplicial graded algebras:
\begin{align*}
\tilde{\phi}\circ \Phi^{-1}\co \Dirsum_{s=0}^\infty I^s/I^{s+1} &\xrightarrow{\isom} T(\F_2[X/G])/\genby{\forall a\in A\, (aG)^2}\\
\bar{x} + I &\mapsto xG
\end{align*}
The $s$-th grade of this isomorphism is
\begin{align*}
I^s/I^{s+1} &\xrightarrow{\isom} \left(T(\F_2[X/G])/\genby{\forall a\in A\, (aG)^2}\right)_s\\
\bar{x_1}\cdots \bar{x_s}+I^{s+1} &\mapsto x_1G \cdots x_s G
\end{align*}

The $s$-th grade of the tensor algebra is $T_s(\F_2[X/G])$ can be identified with $\F_2[(X/G)^{\smash s}]$. Via this identification, the terms of degree $s$ in the ideal $\genby{\forall a\in A\, (aG)^2}$ are linear combinations of smash products $x_1G\smash \cdots \smash x_s G$ such that, for some $i=1,\ldots, s-1$, the elements $x_i$ and $x_{i+1}$ are equal and belong to $A$. The result follows by the definition of $\tilDel_s$.
\end{proof}

%---------------section 3-----------------------------------------------------------------
\section{Proof of Theorem \ref{thm: homology decomp of JGX}} \label{sec: E0 is group ring}

In this Section, we show that the existence of a section of the orbit projection leads to a mod 2 homology decomposition of $J^G[X]$. There are two proof ingredients. First, we show that the powers of the augmentation ideal of $\F_2(J^G[X])$ have trivial intersection. Second, we show that the exact sequences $I^{s+1}\to I^s\to I^s/I^{s+1}$ are split. These imply the $\F_2(J^G[X])$ is isomorphic to $E^0$ and that the long exact sequence associated to $I^{s+1}\to I^s\to I^s/I^{s+1}$ splits into short exact sequences. Therefore the spectral sequence associated to the augmentation ideal filtration collapses at the $E^1$ term and converges to $H_*(J^G[X];\F_2)$.

We begin with a characterization of the $G$-sets whose orbit projection has a section.
\begin{proposition} \label{prop: form of decomposable Z2}
The orbit projection has a section if and only if, there exist simplicial sets $A$ and $Y$ with $A$ as a simplicial subset of $Y$, such that $X$ is a pushout $Y\union_A Yt$ with the action of flipping $Y$ with $Yt$.
\end{proposition}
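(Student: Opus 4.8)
The plan is to prove both implications by reading the simplicial subsets $A$ and $Y$ directly off the data, checking everything one simplicial degree at a time.

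For the ``if'' direction, assume $X\isom Y\union_A Yt$ with $G$ flipping the two copies of $Y$ and fixing the amalgamating copy of $A$. First I would check that the fixed simplicial subset of $X$ is precisely $A$: every simplex of $Y\setminus A$ is interchanged by $t$ with its mate in $Yt$, hence is not fixed, whereas $t$ restricts to the identity on the copy of $A$ along which the pushout is formed. Consequently each $G$-orbit of simplices meets $Y$ in exactly one simplex, so the composite $Y\into X\to X/G$ is a bijection in every dimension, i.e.\ an isomorphism of simplicial sets. Composing its inverse with $Y\into X$ produces a section $j\co X/G\to X$ of the orbit projection, and it can be taken pointed since the basepoint lies in $A\subseteq Y$.

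For the ``only if'' direction, let $j\co X/G\to X$ be a section. Since the orbit projection composed with $j$ is the identity, $j$ is a pointed simplicial monomorphism; set $Y:=\im(j)$, a simplicial subset isomorphic to $X/G$, and let $A$ be the fixed simplicial subset of $X$. I would verify three facts. (i) $A\subseteq Y$: for a fixed simplex $a$ the orbit $aG$ is the singleton $\{a\}$, and $j(aG)$ must lie in that orbit, so $j(aG)=a$. (ii) $X=Y\union Yt$: for any simplex $x$, the simplex $j(xG)$ lies in the orbit of $x$, hence $x=j(xG)\in Y$ or $x=t\cdot j(xG)\in Yt$. (iii) $Y\intersect Yt=A$: the inclusion $\supseteq$ follows from (i) since $At=A$; conversely, if $x=j(p)=t\,j(q)$, then applying the orbit projection gives $p=q=xG$, so $x=j(xG)=t\,j(xG)=tx$ and $x\in A$. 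Finally I would assemble these into the claimed pushout: the map $Y\union_A Yt\to X$ that is the inclusion on the $Y$-summand and $y\mapsto ty$ on the $Yt$-summand is well defined because the two prescriptions agree on $A$ (where $t$ acts trivially), is equivariant for the flip action on the source (as $t\cdot Y=Yt$ and $t$ fixes $A$), and is bijective by (ii) and (iii); hence it is an isomorphism of pointed simplicial $G$-sets.

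Everything above is checked dimensionwise; since $j$ is simplicial and the $G$-action commutes with faces and degeneracies, $Y$, $Yt$ and $A$ are genuine simplicial subsets and the pushout is formed in each degree, so no compatibility problem arises. The one point that needs care — and the only place where the \emph{section} hypothesis is used, as opposed to mere surjectivity of the orbit projection, which always holds — is (iii): one must be certain that the two copies $Y$ and $Yt$ overlap in exactly the fixed set and nothing more. I expect this to be the sole real obstacle; the remaining verifications are routine.
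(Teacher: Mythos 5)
Your proposal is correct and follows essentially the same approach as the paper's (much terser) proof: in one direction you take $Y=\im j$ and $A$ the fixed set and verify $X=Y\union_A Yt$, and in the other you observe that $Y\to X\to X/G$ is an isomorphism and invert it. The paper simply asserts these two facts; you supply the dimensionwise verifications, all of which are accurate, including the key point (iii) that $Y\cap Yt=A$, which indeed is where the section hypothesis does its work.
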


\begin{proof}
If $j$ is a section of the orbit projection, then $X=\im j\union_A (\im j) t$ where $A\subset X$ is the set fixed under the action.

Conversely, the orbit space of a pushout $Y\union_A Yt$ is isomorphic to $Y$. Thus the map $Y\into Y\union_A Yt$ that is the inclusion to the left copy of $Y$ gives the required section.
\end{proof}
For the $G$-set $Y\union_A Yt$, its orbit space is isomorphic to $Y$ and the set fixed under the action is just $A$. There are two sections of the orbit projection. One section maps the orbits space to $Y$, the other section maps the orbit space to $Yt$.

In the case where the coefficient ring is a field, there is a characterization of group rings for which the powers of the augmentation ideal to have trivial intersection. We recall below the characterization if the coefficient ring is a field of prime characteristic (see Theorem 2.26 of \cite{Passi79}.)

We use the following terminology from group theory. A group has {\emph{bounded exponent}} if there exists an integer $n\ge 0$ such that every element of the group has order at most $n$. We say $\P$ is a {\textit{property of groups}} if (i) the trivial group has the property $\P$ and (ii) given isomorphic groups $G$ and $H$, the group $G$ has property $\P$ if and only if the group $H$ has property $\P$. A group $G$ is residually $\P$ if, for each nonidentity element $x\in G$, there exists a group epimorphism $\phi\co G\to H$ where $H$ is a $\P$-group such that $\phi(x)\neq 1$.

\begin{proposition} \label{prop: from passi}
Let $J$ be the augmentation ideal of a group ring $K(H)$ where $K$ is a field of characteristic prime $p$. Then $\Intersect_n J^n=0$ if and only if $H$ is residually nilpotent $p$-group of bounded exponent.
\end{proposition}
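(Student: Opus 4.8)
The plan is to deduce the statement from the theory of dimension subgroups. Write $D_n = D_n(H,K) := H\intersect(1+J^n)$ for the $n$-th dimension subgroup of $H$; these form a descending chain of normal subgroups, and $\bar h = h-1$ lies in $J^n$ exactly when $h\in D_n$. I would use two standard facts. First, by the Jennings--Lazard theorem, since $\mathrm{char}\,K = p$, the chain $\{D_n\}$ coincides with the Brauer--Jennings--Zassenhaus series $D_n=\prod_{ip^j\ge n}\gamma_i(H)^{p^j}$; in particular it is independent of $K$, its formation is functorial in $H$, and each quotient $H/D_n$ is a nilpotent $p$-group of bounded exponent (of nilpotency class $<n$ and exponent dividing $p^{\lceil\log_p n\rceil}$). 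Second, reading the same formula for a group $P$ that is nilpotent of class $c$ and of exponent dividing $p^e$ shows $D_n(P)=1$ as soon as $n>c\,p^{\,e-1}$. Combining these gives the reformulation I would use throughout: \emph{$H$ is residually (a nilpotent $p$-group of bounded exponent) if and only if $\Intersect_n D_n(H)=1$.} Indeed, if the intersection is trivial then for each $h\ne 1$ some quotient $H/D_n$ with $h\notin D_n$ is of the required type by the first fact; conversely an epimorphism $\phi\co H\onto P$ onto such a group with $\phi(h)\ne 1$ forces $h\notin D_N(H)$ for $N$ as in the second fact, because $\phi(D_N(H))\subseteq D_N(P)=1$.

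Given this reformulation, one implication is immediate: a nontrivial element $d$ of $\Intersect_n D_n(H)$ gives a nonzero element $\bar d\in\Intersect_n J^n$, so $\Intersect_n J^n=0$ forces $\Intersect_n D_n(H)=1$, i.e.\ that $H$ is residually (a nilpotent $p$-group of bounded exponent).

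For the converse I would assume $\Intersect_n D_n(H)=1$ and argue in three reduction steps. First, reduce to the prime field: as $\F_p(H)$ is $\F_p$-free and $K$ is a flat $\F_p$-algebra, one has $J^m(K(H))=K\tensor_{\F_p}J^m(\F_p(H))$ for all $m$, hence $\Intersect_m J^m(K(H))=K\tensor_{\F_p}\Intersect_m J^m(\F_p(H))$, and it is enough to prove $\Intersect_m J^m(\F_p(H))=0$. Second, consider the natural ring homomorphism $\Phi\co\F_p(H)\to\prod_n\F_p(H/D_n)$; since $\Intersect_n D_n(H)=1$, any two distinct elements of $H$ already have distinct images in some $H/D_n$, so a nonzero element of $\F_p(H)$ has nonzero image under some factor of $\Phi$, that is, $\Phi$ is injective. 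Third, each projection $\F_p(H)\to\F_p(H/D_n)$ carries $J^m(\F_p(H))$ into $J^m(\F_p(H/D_n))$, so $\Phi$ maps $\Intersect_m J^m(\F_p(H))$ into $\prod_n\Intersect_m J^m(\F_p(H/D_n))$; thus, provided each factor $\Intersect_m J^m(\F_p(H/D_n))$ vanishes, $\Intersect_m J^m(\F_p(H))\subseteq\ker\Phi=0$ and the proof is complete.

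Everything therefore rests on the base case, which I expect to be the real obstacle and the only place where bounded exponent is used essentially: \emph{for a nilpotent $p$-group $P$ of bounded exponent, $\Intersect_m J^m(\F_p P)=0$.} My approach would be via Jennings' description of the associated graded ring: $\Dirsum_{m\ge 0}J^m/J^{m+1}$ is the restricted universal enveloping algebra of the restricted Lie algebra $\Dirsum_{m\ge 1}D_m(P)/D_{m+1}(P)$, which by the second fact above is concentrated in the finitely many degrees $1,\dots,c\,p^{\,e-1}$; hence the associated graded is a nonnegatively graded $\F_p$-algebra with degree-zero part $\F_p$, and in any such algebra the powers of the positive-degree ideal intersect in zero. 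What remains is to transfer this conclusion from the associated graded back to $\F_p P$ itself---equivalently, to show that the $J$-adic filtration on $\F_p P$ is separated. For finite $P$ this is trivial, since then $J$ is nilpotent; but for infinitely generated $P$ (such a $P$ is locally finite yet need not be residually finite) the transfer is delicate, and this is precisely the content of Theorem~2.26 of \cite{Passi79}, which one may alternatively invoke directly at this stage.
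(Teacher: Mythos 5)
The paper does not give an internal proof of this proposition---it states it as a recalled result, prefacing it with ``(see Theorem 2.26 of \cite{Passi79})''---so there is no argument of the authors' to compare yours against. On its own terms, your reductions are all correct: the Jennings--Lazard translation of ``residually (nilpotent $p$-group of bounded exponent)'' into $\Intersect_n D_n(H)=1$, the flat base change to $\F_p$, and the embedding $\F_p(H)\into\prod_n\F_p(H/D_n)$ together correctly reduce the nontrivial implication to showing $\Intersect_m J^m(\F_p P)=0$ when $P$ is a nilpotent $p$-group of bounded exponent.

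But that base case is the theorem, and it is where your argument stops. Controlling the associated graded ring $\operatorname{gr}_J(\F_p P)$ via Jennings' theorem does not by itself control $\Intersect_m J^m$: the associated graded ring is insensitive to $\Intersect_m J^m$, and the vanishing of $\Intersect_m(\operatorname{gr}^+_J)^m$ is equivalent to the vanishing of $\Intersect_m J^m$ only when the $J$-adic filtration is already known to be separated, which is precisely what must be proved. That is trivial for finite $P$ (where $J$ is nilpotent), but for infinite nilpotent $p$-groups of bounded exponent you need more---for instance the infinite-group form of Jennings' basis theorem, giving a weighted monomial $\F_p$-basis of $\F_p P$ compatible with the filtration, so that every nonzero element has finite weight and escapes some $J^m$. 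You candidly note that you would ``alternatively invoke [Theorem 2.26 of Passi] directly at this stage,'' but that makes the argument circular. What you have, then, is a sound and instructive reduction of the proposition to its essential special case, closed by the same external citation the paper itself uses; it is not a self-contained proof.
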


We will need the following result of Gruenberg \cite{Gruenberg57}.

\begin{lemma} \label{lem: Gruenberg result}
The free product of finitely many residually finite $p$-groups is a residually finite $p$-group.
\end{lemma}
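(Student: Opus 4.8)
The plan is to prove Gruenberg's lemma by reducing the finite case to the classical theorem of Gruenberg that a free product of residually nilpotent groups is residually nilpotent, but more efficiently by directly exhibiting a separating family of finite $p$-quotients. Let me sketch the argument as I would carry it out.

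\medskip

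\textbf{Setup and strategy.} Let $G = G_1 \freeprod \cdots \freeprod G_k$ where each $G_i$ is a residually finite $p$-group, and fix a nonidentity element $w \in G$. I must produce a finite $p$-group $Q$ and an epimorphism $\phi\co G \onto Q$ with $\phi(w) \ne 1$. The first step is to write $w$ in normal form $w = a_1 a_2 \cdots a_r$ with each $a_j$ a nonidentity element lying in one of the factors and consecutive letters lying in distinct factors. The key classical input is Gruenberg's theorem that the free product of residually nilpotent groups is residually nilpotent; I would instead keep things self-contained and finite by the following device. Since each $G_i$ is residually finite $p$, for each letter $a_j$ choose a finite $p$-quotient $\pi_{i,j}\co G_{i(j)} \onto P_{i,j}$ detecting $a_j$; taking for each factor $G_i$ the (finite) product of all these quotients that involve $G_i$, we get for every $i$ a single epimorphism $\rho_i\co G_i \onto P_i$ onto a finite $p$-group such that every letter $a_j$ of $w$ has $\rho_{i(j)}(a_j) \ne 1$. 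Passing through $\freeprod_i \rho_i$, we reduce to the case where every $G_i$ is itself a finite $p$-group, and $w$ has a normal form all of whose letters are nontrivial; it suffices to find a finite $p$-quotient of $P_1 \freeprod \cdots \freeprod P_k$ detecting $w$.

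\medskip

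\textbf{The finite free product case.} So assume now each $G_i$ is a finite $p$-group. Here I would invoke the standard fact that a free product of finite $p$-groups is residually (finite $p$): one way is to note $G = G_1 \freeprod \cdots \freeprod G_k$ acts on a tree (its Bass--Serre tree) with finite $p$-group stabilizers, hence is virtually free, with a free normal subgroup $N$ of finite $p$-power index $p^m$ (the kernel of $G$ acting on the finite set $G/G_1 \sqcup \cdots \sqcup G/G_k$ suitably truncated, or the kernel of $G \to \prod_i G_i$ composed with further refinement — the point is one gets a free normal subgroup of $p$-power index). A free group is residually (finite $p$) — this is classical (the lower central $p$-series $\RLCS_n$ has trivial intersection for a free group). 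Given our $w$: if $w \in N$, use that $N$ is free to find a finite $p$-quotient of $N$ detecting $w$, then induce up to $G$ (the induced/coinduced module construction, or simply: $N/\RLCS_n N$ is a finite $p$-group, characteristic in $N$ hence normal in $G$, and $G/\RLCS_n N$ is a finite $p$-group for suitable $n$ since it is an extension of the $p$-group $G/N$ by the finite $p$-group $N/\RLCS_n N$, and $w \notin \RLCS_n N$ for $n$ large). If $w \notin N$, then already $G \onto G/N$ is a finite $p$-group quotient detecting $w$. Either way we win.

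\medskip

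\textbf{Main obstacle.} The delicate point is the existence of a free normal subgroup of $p$-power index in a free product of finite $p$-groups — equivalently, that such a free product is virtually free with the "virtually" witnessed by a $p$-group quotient. This is where one genuinely uses that the factors are $p$-groups and not merely finite: the Kurosh subgroup theorem gives free normal subgroups of finite index in any free product of finite groups, but one must arrange the index to be a power of $p$, which follows because $G/N$ can be taken to be a quotient of $G$ by the normal closure of a subgroup contained in the factors, making $G/N$ a free product of sections of $p$-groups, hence (being finite) a finite $p$-group only after further care — in practice one takes $N = \ker(G \to \prod_{g \in G/H}(\text{perm action}))$ massaged to land in $p$-groups. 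I would cite Gruenberg's original paper for this rather than reprove it, and present the write-up as: reduce to finite factors (Step 1), then quote the residual $p$-finiteness of free products of finite $p$-groups with the normal-form detection (Step 2). The bookkeeping in Step 1 — ensuring one epimorphism per factor simultaneously detects all letters — is routine but worth stating carefully since $w$ may use the same factor several times with different letters.
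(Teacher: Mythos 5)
The paper does not prove this lemma; it is stated with a citation to Gruenberg's 1957 paper and used as a black box, so there is no in-paper argument to compare against. Evaluated on its own, your proof is essentially correct and follows the standard route: reduce to finite $p$-group factors via normal forms (Step~1 is carried out correctly), then use a free normal subgroup $N$ of $p$-power index together with the residual $p$-finiteness of finitely generated free groups (via the lower $p$-central series of $N$, whose terms are characteristic in $N$, hence normal in $G$, and have finite $p$-group quotients). The one place you worry unnecessarily is the ``main obstacle'': no further refinement or massaging is needed to produce $N$. Simply take $N = \ker\left(G_1 \ast \cdots \ast G_k \to G_1 \times \cdots \times G_k\right)$ under the natural surjection. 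By the Kurosh subgroup theorem $N$ is free, since it is normal and meets each $G_i$ (hence each conjugate of each $G_i$) trivially; and its index is $\prod_i |G_i|$, automatically a power of $p$. With that observation, your proof is self-contained apart from the classical residual $p$-finiteness of free groups, and it can be presented as a complete argument rather than deferring any step to Gruenberg's original paper.
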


Let $C_\infty$ denote the infinite cyclic group and $C_p$ denote the cyclic group of order $p$.

\begin{proposition}\label{prop: Mark Sapir result}
A free product of arbitrarily many copies of $C_\infty$'s and $C_p$'s is a residually finite $p$-group.
\end{proposition}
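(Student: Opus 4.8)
The plan is to reduce the statement to the case of finitely many free factors and then apply Gruenberg's Lemma \ref{lem: Gruenberg result}. First I would record that each individual factor is already a residually finite $p$-group. A finite $p$-group such as $C_p$ trivially has this property, since the identity map is an epimorphism onto a finite $p$-group not killing any given nonidentity element. For $C_\infty\isom\Z$, the quotient maps $\Z\onto\Z/p^n\Z\isom C_{p^n}$ have finite $p$-group image, and because $\Intersect_n p^n\Z=0$, for every nonzero $x\in\Z$ some such quotient map does not send $x$ to $0$; hence $C_\infty$ is a residually finite $p$-group.

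Next, write the group in question as $P=\Ast_{i\in I} G_i$, where each $G_i$ is a copy of $C_\infty$ or of $C_p$ and $I$ is an arbitrary (possibly infinite) index set. Let $x\in P$ be a nonidentity element. Putting $x$ into normal form, $x=a_1\cdots a_m$ with each $a_j$ a nonidentity element of some factor $G_{i_j}$; thus only the finitely many factors indexed by the finite subset $J:=\{i_1,\ldots,i_m\}\subseteq I$ actually occur in $x$. By the universal property of the free product there is a retraction $\rho\co P\to \Ast_{i\in J} G_i$ which is the identity on $G_i$ for $i\in J$ and the trivial homomorphism on $G_i$ for $i\notin J$; its image is the subgroup $\Ast_{i\in J}G_i\le P$ and $\rho(x)=x\neq 1$.

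Now $\Ast_{i\in J} G_i$ is a free product of \emph{finitely many} copies of $C_\infty$ and $C_p$, each of which is a residually finite $p$-group by the first step. Lemma \ref{lem: Gruenberg result} then gives that $\Ast_{i\in J}G_i$ is a residually finite $p$-group, so there is an epimorphism $\psi\co \Ast_{i\in J}G_i\onto H$ onto a finite $p$-group $H$ with $\psi(\rho(x))\neq 1$. The composite $\psi\circ\rho\co P\to H$ is an epimorphism onto a finite $p$-group with $(\psi\circ\rho)(x)\neq 1$. As $x$ was an arbitrary nonidentity element of $P$, this shows $P$ is a residually finite $p$-group.

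The only point requiring a little care — and the step I would treat as the main obstacle — is the passage from an arbitrary index set to a finite one, i.e.\ verifying that the retraction $\rho$ is well defined, basepoint preserving, and fixes the chosen element $x$; once that is in place, the result is immediate from Gruenberg's Lemma together with the elementary fact that $\Z$ is residually a finite $p$-group.
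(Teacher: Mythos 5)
Your proof is correct and follows the same strategy as the paper: reduce to finitely many factors via the retraction onto the subproduct of factors occurring in the word, then invoke Gruenberg's Lemma together with the elementary fact that $C_p$ and $C_\infty$ are residually finite $p$-groups. The only differences are cosmetic — you spell out the residual finiteness of $\Z$ via the quotients $\Z\onto\Z/p^n\Z$ and justify $\rho(x)\neq 1$ via normal form, both of which the paper leaves implicit.
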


\begin{proof}
Let a group $G$ which is a free product of $C_\infty$'s and $C_p$'s be given. We write $G=\Ast_{i\in I} H_i$ where $I$ is an index set and $H_i$ is an isomorphic copy of either $C_\infty$ or $C_p$. For each $i\in I$, fix a generator $t_i$ of $H_i$.

Let a word $w=t_{i_1}^{n_{i_1}}\cdots t_{i_k}^{n_{i_k}}$ be given. Let $H= H_{i_1}\ast \cdots \ast H_{i_k}$. Let $\psi\co G\to H$ be the group homomorphism given by
\[
\psi(t_j) = \begin{cases}
t_j, & {\text{if }} j=i_1,\ldots, i_k\\
1_H, & {\text{otherwise.}}
\end{cases}
\]
This $\psi(w)$ is a nonidentity element of $H$.

It is easy to show that $C_p$ and $C_\infty$ are both residually finite $p$-groups. Thus Lemma \ref{lem: Gruenberg result} implies that the group $H$ is a residually finite $p$-group. Since $\psi(w)$ is a nonidentity element of $H$, there exists a group epimorphism $\phi\co H\to K$ where $K$ is a finite $p$-group such that $\phi(\psi(w))\neq 1$. Since the composite $G\xrightarrow{\psi} H \xrightarrow{\phi} K$, this proves that $G$ a residually finite $p$-group.
\end{proof}

The following proposition is straightforward and its proof is omitted.

\begin{proposition} \label{prop: decomposition of JGX}
Let $X$ be a pointed $G$-set. If $X$ is written as a disjoint union $A\sqcup B\sqcup Bt$, then there is a group isomorphism
\begin{align*}
J^G[X] &\to J^G[A] \ast F(B) \\
a      &\mapsto a \\
b      & \mapsto b
\end{align*}
\end{proposition}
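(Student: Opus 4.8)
The plan is to work throughout with the presentation of $J^G[X]$ supplied by \eqref{eq: form of Carl constru for Z2}, namely $J^G[X]\isom F[X]/\genby{\forall x\in X\,(x\cdot xt\sim 1)}$, and to obtain the claimed isomorphism by comparing presentations via Tietze transformations. First I would record the relevant presentations explicitly. Since $A$ is the fixed set of $X$, it carries the trivial $G$-action as a sub-$G$-set, so applying \eqref{eq: form of Carl constru for Z2} to $A$ gives $J^G[A]\isom F[A]/\genby{\forall a\in A\,(a^2\sim 1)}$, a free product of copies of $C_2$ indexed by the nonbasepoint elements of $A$. Hence $J^G[A]\ast F(B)$ has the presentation with generating set $A\sqcup B$ (nonbasepoint elements) and relations $a^2=1$ for all $a\in A$. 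On the other hand $J^G[X]$ has generating set $A\sqcup B\sqcup Bt$ and one relation $x\cdot xt=1$ for each $x$: for $x=a\in A$ this reads $a^2=1$ (as $at=a$); for $x=b\in B$ it reads $b\cdot bt=1$; and for $x=bt\in Bt$ it reads $(bt)\cdot b=1$ (as $(bt)t=b$), which is the inverse of the preceding relation and hence redundant.

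Next I would carry out, for each $b\in B$, the Tietze transformation that eliminates the generator $bt$ using the relation $b\cdot bt=1$, i.e.\ substitutes $bt\mapsto b^{-1}$ everywhere. After this substitution the redundant relations $(bt)\cdot b=1$ become $b^{-1}\cdot b=1$ and vanish, leaving exactly the generating set $A\sqcup B$ with relations $a^2=1$ for $a\in A$ — precisely the presentation of $J^G[A]\ast F(B)$ above, with the identification sending $a\mapsto a$, $b\mapsto b$, and (necessarily) $bt\mapsto b^{-1}$. This is the map in the statement. Alternatively, to avoid Tietze bookkeeping, one can argue directly: the assignment $a\mapsto a$, $b\mapsto b$, $bt\mapsto b^{-1}$ on generators respects every defining relation $x\cdot xt=1$ (checked case by case as above, and the basepoint maps to the identity), so it defines a homomorphism $J^G[X]\to J^G[A]\ast F(B)$; an inverse is assembled from the functorial map $J^G[A]\to J^G[X]$ induced by the inclusion $A\embed X$ of $G$-sets together with the homomorphism $F(B)\to J^G[X]$, $b\mapsto b$ (well-defined since $F(B)$ is free), using the universal property of the free product. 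One then checks that both composites fix the generators, using the identity $bt=b^{-1}$ that holds in $J^G[X]$.

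There is no real obstacle here — consistent with the paper's remark that the statement is straightforward — and the only points needing a word of care are: (a) that $A$ inherits the \emph{trivial} $G$-action, so that \eqref{eq: form of Carl constru for Z2} genuinely computes $J^G[A]$; (b) that the $Bt$-relations are redundant once the $B$-relations are imposed; and (c) that the generator-eliminations (or the universal-property argument) are applied uniformly over the possibly infinite index set $B$, which is harmless since the eliminations are mutually independent.
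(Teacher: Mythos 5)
Your proof is correct. Note that the paper itself gives no argument here: it states the proposition is ``straightforward and its proof is omitted,'' adding only the remark ``In particular $\phi(bt)=\phi(b)^{-1}=b^{-1}$,'' which agrees with the map you construct. Your Tietze-transformation argument (and the universal-property alternative you sketch) is exactly the kind of routine verification the authors had in mind: the relations from $x=bt$ generate the same normal subgroup as those from $x=b$, eliminating the generators $bt$ substitutes $b^{-1}$, and what remains is the presentation of $J^G[A]\ast F(B)$. The only phrasing to tidy is ``is the inverse of the preceding relation'': more precisely, $bt\cdot b$ is a conjugate of $b\cdot bt$, so they have the same normal closure and either relation makes the other redundant. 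Your cautionary points (a)--(c) are all apt, and none hides a real difficulty.
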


In particular $\phi(bt)=\phi(b)^{-1}=b^{-1}$.

\begin{corollary} \label{cor: resdidual nil of gpring}
Let $X$ be a pointed $G$-set. The augmentation ideal $I$ of $\F_2(J^G[X])$ satisfies $\Intersect_{n} I^n=0$.
\end{corollary}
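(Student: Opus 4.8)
The plan is to deduce the statement from Passi's criterion, Proposition~\ref{prop: from passi}, by exhibiting $J^G[X]$ as a free product of copies of $C_2$ and $C_\infty$ and hence as a residually finite $2$-group.

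First I would decompose the $G$-set. Since $A$ is the fixed set, $G$ acts freely on $X\setminus A$, so picking one element from each free orbit produces a subset $B$ with $X=A\sqcup B\sqcup Bt$, exactly as in the proof of Lemma~\ref{lem: alg model of E^0 for aug ideal}. By Proposition~\ref{prop: decomposition of JGX}, $J^G[X]\isom J^G[A]\ast F(B)$. The factor $F(B)$ is a free group, hence a free product of copies of $C_\infty$. For the factor $J^G[A]$, apply the presentation \eqref{eq: form of Carl constru for Z2} to $A$ regarded as a $G$-set with trivial action: the defining relation $x\cdot xt\sim 1$ becomes $x^2\sim 1$ since $xt=x$ for $x\in A$, so $J^G[A]$ is the free product of copies of $C_2$ indexed by the non-basepoint elements of $A$. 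Altogether $J^G[X]$ is a free product of copies of $C_2$ and $C_\infty$.

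Next, Proposition~\ref{prop: Mark Sapir result} with $p=2$ shows that $J^G[X]$ is a residually finite $2$-group. Every finite $2$-group is nilpotent and, being finite, of bounded exponent; moreover the definition of residually-$\mathcal{P}$ allows the chosen quotient (and hence its exponent) to depend on the nonidentity element under consideration, so being a residually finite $2$-group entails being residually a nilpotent $2$-group of bounded exponent. Proposition~\ref{prop: from passi}, applied with $K=\F_2$, $p=2$ and $H=J^G[X]$, then yields $\Intersect_n I^n=0$, as required.

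The argument is mostly an assembly of results already in hand, so I do not expect a serious obstacle. The one point I would take care with is the quantifier matching in the last step — confirming that ``residually finite $2$-group'' genuinely implies ``residually a nilpotent $2$-group of bounded exponent'' rather than demanding a single uniform exponent — together with the identification of $J^G[A]$ with a free product of copies of $C_2$, for which the presentation \eqref{eq: form of Carl constru for Z2} is precisely the input needed.
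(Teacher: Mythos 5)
Your proof is correct and follows essentially the same route as the paper: decompose $X = A\sqcup B\sqcup Bt$, apply Proposition~\ref{prop: decomposition of JGX} to get $J^G[X]\isom J^G[A]\ast F(B)$, identify this as a free product of $C_2$'s and $C_\infty$'s, invoke Proposition~\ref{prop: Mark Sapir result} for residual finite-$2$-ness, and then Proposition~\ref{prop: from passi}. The extra care you take with the quantifier issue and the explicit identification of $J^G[A]$ via \eqref{eq: form of Carl constru for Z2} are reasonable elaborations of steps the paper treats as immediate.
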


\begin{proof}
Write $X$ as a disjoint union $A\sqcup B\sqcup Bt$, then Proposition \ref{prop: decomposition of JGX} gives an isomorphism $J^G[X]\isom J^G[A] \ast F(B)$. The group $J^G[A]$ is a free product of $C_2$'s while the free group $F(B)$ is a free product of $C_\infty$'s. Proposition \ref{prop: Mark Sapir result} applies to show that $J^G[X]$ is a residually finite $2$-group. Since a finite $2$-group is a nilpotent $2$-group of bounded exponent, the group $J^G[X]$ is a residually nilpotent $2$-group of bounded exponent. Then the result follows from Proposition \ref{prop: from passi}.
\end{proof}

This corollary implies that the spectral sequence $\{E^r\}$ is weakly convergent.

\begin{proposition} \label{prop: gp ring as EO}
Let $J$ be the augmentation ideal of its group ring $K(H)$ with coefficients in a field $K$. If $\bigcap_n J^n=0$ and the short exact sequence $J^{s+1}\to J^s\to J^s/J^{s+1}$ is split for all $s$, then there is an isomorphism of $K$-modules:
\[
K(H) \isom \Dirsum_{s=0}^\infty J^s/J^{s+1}.
\]
\end{proposition}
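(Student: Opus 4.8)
The plan is to use the splittings to strip off the associated‐graded pieces one at a time, and then to pass to the limit via a dimension count that must, at the crucial step, feed in $\bigcap_n J^n=0$ together with the fact that $K(H)$ is a group ring. Since $K$ is a field every short exact sequence of $K$-modules splits, but the hypothesis hands us \emph{chosen} sections $\sigma_s\co J^s/J^{s+1}\to J^s$ of the projections; put $C_s:=\im\sigma_s$, so that $J^s=C_s\oplus J^{s+1}$ as $K$-modules with $C_s\isom J^s/J^{s+1}$. Iterating this decomposition downward from $J^0=K(H)$ gives, for every $n\ge 1$,
\[
K(H)=C_0\oplus C_1\oplus\cdots\oplus C_{n-1}\oplus J^n .
\]

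The easy half is now immediate: taking $K$-dimensions (as cardinals) in the display yields $\dim_K K(H)=\sum_{s=0}^{n-1}\dim_K(J^s/J^{s+1})+\dim_K J^n$ for every $n$, so all partial sums $\sum_{s<n}\dim_K(J^s/J^{s+1})$ are bounded by $\dim_K K(H)$ and hence $\sum_{s\ge 0}\dim_K(J^s/J^{s+1})\le\dim_K K(H)$. As $K$-modules are determined up to isomorphism by their dimension and $\dim_K\big(\Dirsum_s J^s/J^{s+1}\big)=\sum_s\dim_K(J^s/J^{s+1})$, the statement reduces to the reverse inequality $\dim_K K(H)\le\sum_s\dim_K(J^s/J^{s+1})$, i.e.\ to showing that the tail $\dim_K J^n$ cannot stay large. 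If $J^N=0$ for some $N$ we are done at once, since then $J^s/J^{s+1}=0$ for $s\ge N$ and the display with $n=N$ gives equality. If instead $J^n\ne 0$ for all $n$, this alternative is impossible whenever $\dim_K K(H)<\infty$ (in particular whenever $H$ is finite): $\dim_K J^n$ is then a non-increasing sequence of positive integers, hence eventually a constant $c>0$, so $J^N\supseteq J^{N+1}\supseteq\cdots$ is a descending chain of finite-dimensional subspaces all of dimension $c$, forcing $J^N=J^{N+1}=\cdots$ and therefore $\bigcap_n J^n=J^N\ne 0$, a contradiction. Thus in the finite-dimensional case the alternative $J^N=0$ always holds and the proof is complete.

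The genuinely hard case, and the one I expect to be the main obstacle, is $\dim_K K(H)=\infty$: one must rule out that $\dim_K J^n$ remains equal to $\dim_K K(H)$ for all $n$ while $\sum_s\dim_K(J^s/J^{s+1})$ is strictly smaller — a phenomenon which \emph{does} occur for a general Hausdorff-filtered vector space, so here the group-ring structure is essential, and a plain reduction to finitely generated subgroups (for each of which every $J_0^s/J_0^{s+1}$ is finite-dimensional, being a quotient of $(J_0/J_0^2)^{\tensor_K s}$ with $J_0/J_0^2\isom\ab{H_0}\tensor_{\Z}K$) turns out not to suffice on its own. The route I would pursue is to show directly that the associated graded ``sees all of'' $K(H)$: using the dimension-subgroup filtration $H=H_1\supseteq H_2\supseteq\cdots$ with $H_s=\{h\in H:\bar h\in J^s\}$, one has $\bigcap_s H_s=\{1\}$ and injective $K$-linear-comparison maps $H_s/H_{s+1}\hookrightarrow J^s/J^{s+1}$, which, combined with the description of $K(H)$ by a filtered basis $\{1\}\cup\{\bar h:h\ne 1\}$ and the expansion $\overline{x_1\cdots x_m}=\sum_{\emptyset\ne S\subseteq\{1,\dots,m\}}\prod_{i\in S}\bar x_i$ of each group element in terms of the generators of $I$, should produce a spanning set of $K(H)$ of cardinality at most $\sum_s\dim_K(J^s/J^{s+1})$. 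Making this bookkeeping precise — in effect, controlling how the $I$-adic filtration distributes the ``bulk'' of $K(H)$ across its graded pieces — is where the real work lies.
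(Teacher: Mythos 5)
Your reduction to the dimension inequality $\dim_K K(H)\le\sum_s\dim_K(J^s/J^{s+1})$ and your treatment of the finite-dimensional case are correct, and your instinct that the infinite-dimensional case is where the danger lies is exactly right. But the proposal is incomplete: you explicitly defer the infinite-dimensional case (``making this bookkeeping precise \dots is where the real work lies''), so it does not establish the proposition. What is worth noting is that the paper's own proof runs into the very soft spot you identified. The paper iterates the splittings to get $K(H)\isom V_n\oplus J^n$ with $V_n\isom\bigoplus_{s<n}J^s/J^{s+1}$, observes $\bigoplus_s J^s/J^{s+1}\isom\varinjlim_n V_n$, and then asserts $\varinjlim_n K(H)/J^n\isom K(H)/\bigcap_n J^n$. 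Unwound, that last step is precisely the claim $\bigcup_n V_n=K(H)$ --- exactly the statement you flag as potentially failing for a general Hausdorff-filtered vector space, and it does fail in general: filter $K^{\N}$ (arbitrary sequences, not finitely supported) by $J^n=\{a: a_0=\cdots=a_{n-1}=0\}$; then $\bigcap_n J^n=0$ and each $J^n/J^{n+1}\isom K$, but $\bigcup_n V_n=K^{(\N)}\subsetneq K^{\N}$. There is no identity of the form $\varinjlim_n M/J^n\isom M/\bigcap_n J^n$ for a \emph{descending} chain of submodules; the familiar formula $\varinjlim_n M/I_n\isom M/\bigcup_n I_n$ is for an ascending chain, and for a descending chain the relevant object is $\varprojlim_n M/J^n$, i.e.\ the completion, which is not what is wanted here.

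For the cases the paper actually uses (and for most reasonable hypotheses, e.g.\ $X$ of countable type so that $H=J^G[X]_n$ is countable), you are essentially one sentence away from a complete proof that sidesteps all of this. Since $K$-vector spaces are classified up to isomorphism by dimension, it suffices to show $\dim_K K(H)=\sum_s d_s$ where $d_s:=\dim_K(J^s/J^{s+1})$. You already have $\sum_s d_s\le\dim_K K(H)$ from the truncations, and you have shown that $\sum_s d_s<\infty$ forces $J^N=0$ for some $N$ and hence $\dim_K K(H)<\infty$. So if $\dim_K K(H)=\aleph_0$ then $\sum_s d_s$ must be infinite, and being bounded above by $\aleph_0$ it equals $\aleph_0=\dim_K K(H)$. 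Combined with your finite case this proves the proposition whenever $\dim_K K(H)\le\aleph_0$, with no need for dimension subgroups or the filtered-basis bookkeeping you sketch. Whether the statement holds for completely arbitrary $H$ with $\dim_K K(H)$ uncountable is a subtler question that neither your proposal nor the paper's proof resolves, but it does not affect the paper's applications.
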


\begin{proof}
Since the coefficients are taken in a field, the split short exact sequences imply that $J^s\isom J^{s+1}\dirsum J^s/J^{s+1}$ for all $s$. An easy induction shows that $K(H)\isom J^{n} \dirsum \Dirsum_{s=0}^{n-1} J^s/J^{s+1}$ for all $n$. Thus there is an isomorphism of $K$-modules for each $n$:
\[
\Dirsum_{s=0}^{n-1} J^s/J^{s+1} \isom K(H)/J^n.
\]
This allows us to identify the filtered system
\[
K(H)/J^1 \to \Dirsum_{s=0}^{1} J^s/J^{s+1}\to\cdots \to \Dirsum_{s=0}^{n-1} J^s/J^{s+1} \to \cdots
\]
with the filtered system
\[
K(H)/J^1 \to K(H)/J^2\to\cdots \to K(H)/J^n \to \cdots
\]
Therefore the colimits are isomorphic as $K$-modules:
\begin{align*}
\Dirsum_{s=0}^\infty J^s/J^{s+1} &\isom \varinjlim_n \Dirsum_{s=0}^{n-1} J^s/J^{s+1} \\
&\isom \varinjlim_n K(H)/J^n \\
&\isom K(H)/\bigcap_n J^n \\
&= K(H),
\end{align*}
where we used the assumption that $\bigcap_n J^n$ is trivial is the last step.
\end{proof}

\begin{proof}[Proof of Theorem \ref{thm: homology decomp of JGX}]
First we show that the following short exact sequence is split for each $s$:
\begin{equation} \label{eq: to split or not to split}
I^{s+1}\to I^s\to I^s/I^{s+1}
\end{equation}
For $s=0$, the short exact sequence \eqref{eq: to split or not to split} always splits for any group ring. For $s\ge 1$, Corollary \ref{cor: E0 term as space} gives an isomorphism $I^s/I^{s+1}\to \F_2\left[(X/G)^{\smash s}/\tilDel_s\right]$ defined by $\bar{x_1}\cdots \bar{x_s}+I^{s+1}\mapsto x_1G\smash \cdots\smash x_sG$. Via this isomorphism, it suffices to show that the following map has a section:
\begin{align*}
\alpha \co I^s &\to \F_2\left[(X/G)^{\smash s}/\tilDel_s\right] \\
      \bar{x_1}\cdots \bar{x_s}   &\mapsto x_1G\smash \cdots\smash x_sG.
\end{align*}

By Proposition \ref{prop: form of decomposable Z2}, the assumption that the orbit projection has a section allows us to write $X=Y\union_A Yt$. Then every orbit is of the form $y G$ for some $y\in Y$. Define $\beta\co \F_2\left[(X/G)^{\smash s}/\tilDel_s\right]\to I^s$ by $\beta(y_1G\smash \cdots y_s G)=\bar{y_1}\cdots \bar{y_s}$ for $y_1,\ldots, y_s\in Y$. The map $\beta$ is well-defined since if there exists some $i=1,\ldots, s-1$ such that both $y_i$ and $y_{i+1}$ are equal to some $a\in A$, then $\bar{y_i}\,\bar{y_{i+1}}=(a-1)(a-1)=a^2 - 1 = 1-1=0$ as $a^2=1$ in $J^G[X]$ so that $\beta(y_1G\smash \cdots \smash y_s G)=0$. Then $\beta$ is a section of $\alpha$:
$$\alpha(\beta(y_1 G\smash \cdots\smash y_s G))=\alpha(\bar{y_1}\cdots \bar{y_s})=y_1 G\smash \cdots\smash y_s G.$$
Thus we have shown that the exact sequences \eqref{eq: to split or not to split} are split for each $s$.

We have shown that $\Intersect_n I^n=0$ in Corollary \ref{cor: resdidual nil of gpring}. Thus Proposition \ref{prop: gp ring as EO} implies
\begin{equation} \label{eq: group ring is E0}
\F_2(J^G[X])\isom \Dirsum_{s=0}^{\infty} I^s/I^{s+1} \isom \F_2 \dirsum \Dirsum_{s=1}^{\infty} I^s/I^{s+1}
\end{equation}
Using Corollary \ref{cor: E0 term as space} and taking homotopy gives
\[
\pi_*(\F_2(J^G[X]))\isom \pi_*(\F_2)\dirsum\Dirsum_{s=1}^\infty \pi_*\left(\F_2\left[(X/G)^{\smash s}/\tilDel_s\right]\right)
\]
Using the Dold-Thom theorem, this becomes
\[
H_t(J^G[X];\F_2) \isom \begin{cases}
\F_2\dirsum \Dirsum_{s=1}^\infty \redH_0 \left((X/G)^{\smash s}/\tilDel_s;\F_2\right), & {\text{if }} t=0 \\
 \Dirsum_{s=1}^\infty \redH_t \left((X/G)^{\smash s}/\tilDel_s;\F_2\right), & {\text{otherwise}} \\
\end{cases}
\]
Thus the reduced homology of $J^G[X]$ is
\[
\redH_*(J^G[X];\F_2)\isom \Dirsum_{s=1}^\infty \redH_*\left((X/G)^{\smash s}/\tilDel_s;\F_2\right)
\]
The homotopy equivalence \eqref{eq: Carlsson's result} completes the proof.
%\end{equation}
\end{proof}
Note that the splitting of the short exact sequence \eqref{eq: to split or not to split} implies that the associated long exact sequence in homology splits into short exact sequences. Thus the spectral sequence $\{E^r\}$ collapses at the $E^1$ term. The isomorphism \eqref{eq: group ring is E0} between $\F_2(J^G[X])$ and $E^0$ implies that this spectral sequence converges to $H_*(J^G[X];\F_2)$.

Theorem \ref{thm: homology decomp of JGX} should be compared with the following result of the second author.

\begin{proposition}[Theorem 1.1 in \cite{Wu97}]
Let $F=\R,\mathbb{C}$ or $\mathbb{H}$ and let $X$ be a pointed space. Suppose that $H_*$ is a multiplicative homology theory such that (1) both $\bar{H}_*(FP^\infty)$ and $\bar{H}_*(FP^\infty_2)$ are free $H_*(pt)$-modules; and (2) the inclusion of the bottom cell $S^d\to FP^\infty$ induces a monomorphism in the homology. Then there is a multiplicative filtration $\{F_r H_*\Loop (FP^\infty\smash X)|\, r\ge 0\}$ of $H_*\Loop (FP^\infty \smash X)$ such that $F_0=H_*(pt)$ and
$$F_s/F_{s-1}\isom \Sigma^{(d-1)s} \bar{H}_*(X^{\smash s}/\widehat{\Delta}_s)$$
where $d=\dim_{\R} F$, $\Sigma$ is the suspension, $\widehat{\Delta}_1=\pt$ and $\widehat{\Delta}_s=\{x_1\smash\cdots\smash x_s\in X^{\smash s}|\ x_i=x_{i+1}\ for\ some\ i\}$ for $s>1$. Furthermore, this filtration is natural with respect to $X$.
\end{proposition}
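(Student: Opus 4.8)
The plan is to run the proof of Theorem~\ref{thm: homology decomp of JGX} in the generality of a multiplicative homology theory, with Carlsson's construction replaced by the James-type simplicial monoid model of \cite{Wu97} and the mod~$2$ group ring replaced by the Pontryagin ring $R:=H_*\Loop(FP^\infty\smash X)$ over the ground ring $H_*(\pt)$. Write $\mathbb{S}(F)\in\{C_2,S^1,S^3\}$ for the unit sphere of $F$, so that $FP^\infty=B\mathbb{S}(F)$, the bottom cell is $FP^1=S^d$, and $\Loop FP^\infty\simeq\mathbb{S}(F)=S^{d-1}$.

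\emph{Step 1: the combinatorial model.} I would take the simplicial monoid $M^F[X]$ of \cite{Wu97} with $|M^F[X]|\simeq\Loop(FP^\infty\smash X)$, built in the manner of Carlsson's construction as a quotient of a model of the James construction $\Loop(S^d\smash X)=\Loop\Sigma(\Sigma^{d-1}X)$ --- whose letters live in $\Sigma^{d-1}X$ --- by relations imposed by the higher cells $FP^\infty_2=FP^\infty/FP^1$. For $F=\R$ this is Carlsson's $J^{C_2}[X]$ with the trivial action, where $X\rtimes_{C_2}W_\infty^1 C_2\simeq\R P^\infty\smash X$ and the relations reduce to $x^2=1$. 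The monoid $M^F[X]$ carries a word-length filtration $M^F_0[X]\subseteq M^F_1[X]\subseteq\cdots$ with $M^F_0[X]=\pt$ and $\colim_s M^F_s[X]=M^F[X]$, and in each simplicial degree the defining relations become, in the associated graded of the augmentation-ideal filtration, the squaring relations $\bar x^2=0$.

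\emph{Step 2: the filtration and its quotients.} Set $F_s:=\im\bigl(H_*(M^F_s[X])\to H_*(M^F[X])\bigr)$; since word length is additive this is automatically multiplicative, $F_0=H_*(\pt)$ because $M^F_0[X]=\pt$, and $\bigcup_s F_s=R$ since $H_*$ commutes with the filtered colimit of Step~1. The heart of the matter is the isomorphism $F_s/F_{s-1}\isom\Sigma^{(d-1)s}\redH_*(X^{\smash s}/\widehat{\Delta}_s)$, and this is exactly where hypotheses (1) and (2) enter. From the cell structure of $M^F_s[X]/M^F_{s-1}[X]$ one reads off a description of its homology as, roughly, the length-$s$ part of $T\bigl(\redH_*((\Loop FP^\infty)\smash X)\bigr)$ modulo relations detected by $\redH_*(FP^\infty_2\smash X)$-classes: hypothesis (1) --- freeness of $\redH_*(FP^\infty)$ and $\redH_*(FP^\infty_2)$ over $H_*(\pt)$ --- eliminates $\mathrm{Tor}$-terms and makes the relevant Künneth isomorphisms exact, while hypothesis (2) --- monicity of $\redH_*(S^d)\to\redH_*(FP^\infty)$ --- identifies the squaring relations with the reduced diagonal, so that the quotient collapses exactly the pinched subspace $\widehat{\Delta}_s$, with each of the $s$ letters contributing one extra factor $\redH_*(\Loop FP^\infty)=\redH_*(S^{d-1})=\Sigma^{d-1}H_*(\pt)$, whence the shift $\Sigma^{(d-1)s}$. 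For $F=\R$ we have $d=1$ and $A=X$, so $\widehat{\Delta}_s=\tilDel_s$ and there is no shift, recovering Theorem~\ref{thm: homology decomp of JGX}.

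\emph{Step 3: splitting, and the main obstacle.} It remains to show that the long exact homology sequences of the pairs $(M^F_s[X],M^F_{s-1}[X])$ break into short exact sequences and that each $F_{s-1}$ is an $H_*(\pt)$-linear retract of $F_s$, equivalently that the augmentation-ideal spectral sequence of $R$ collapses --- the analogue of the splitting step in the proof of Theorem~\ref{thm: homology decomp of JGX} and of Corollary~\ref{cor: resdidual nil of gpring}. \textbf{The main obstacle} is that for $F=\R$ with mod~$2$ coefficients that step rested on two facts unavailable for a general multiplicative homology theory: that $\bigcap_n I^n=0$, proved via residual nilpotence of a free product of $C_2$'s and $C_\infty$'s, and the explicit $\F_2$-linear section $y_1G\smash\cdots\smash y_sG\mapsto\bar{y_1}\cdots\bar{y_s}$. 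The remedy is to build the splitting from the combinatorial model: choose, using hypothesis (2), a splitting of $\redH_*(FP^\infty)$ compatible with its bottom cell $S^d$, and use the resulting ``leading letter'' maps to produce an $H_*(\pt)$-linear section of each filtration quotient (in good cases this upgrades to a stable splitting of the word-length filtration in the spirit of James's splitting). Once these sections are in place, multiplicativity of $F_\bullet$ is built in, and naturality in $X$ follows from the functoriality of $M^F[-]$ and of all the maps used, exactly as in the mod~$2$ case.
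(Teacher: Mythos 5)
This proposition is not proved in the paper at all: it is quoted verbatim as Theorem~1.1 of \cite{Wu97}, and the only thing the paper does with it is observe (after the proof of Theorem~\ref{thm: homology decomp of JGX}) that the $F=\R$, $\F_2$-coefficient case is recovered from the paper's Theorem~\ref{thm: homology decomp of JGX} when the $G$-action is trivial. So there is no proof in the paper against which your argument can be measured, and the implication you are trying to establish runs in the opposite direction to the one the authors actually use.

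On its own terms, your sketch has the right shape --- a word-length filtration on a simplicial monoid model with squaring relations, identification of the filtration quotients, and a splitting argument --- and it correctly identifies where the paper's mod~$2$ group-ring argument breaks: the augmentation-ideal machinery, the Dold--Thom step, and the residual-nilpotence input all require a field (indeed $\F_2$), and none of them is available for a general multiplicative homology theory. But the two load-bearing steps remain unproved. In Step~2 you assert without argument that $H_*(M^F_s[X]/M^F_{s-1}[X])$ is the length-$s$ part of the stated tensor algebra modulo diagonal relations, shifted by $\Sigma^{(d-1)s}$; this is exactly the content that requires the freeness hypothesis~(1) to make the relevant K\"unneth and Atiyah--Hirzebruch comparisons exact, and saying ``one reads off'' does not establish it. In Step~3 the proposed ``leading-letter'' section is not constructed; for a nontrivial ground ring $H_*(\pt)$ there is no analogue of your explicit $\F_2$-linear splitting $y_1G\smash\cdots\smash y_sG\mapsto\bar y_1\cdots\bar y_s$, and the collapse of the filtration spectral sequence must instead come from the cofibre structure of the word-length filtration together with hypotheses~(1)--(2) (this is essentially the James-type stable splitting you allude to, but it needs an actual argument). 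As written, the proposal is an outline of a plausible strategy rather than a proof, and it would have to be completed independently of anything in the present paper.
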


Take $F=\R$. In this case, the above result holds for the reduced mod 2 homology. Since $\F_2$ is a field, the multiplicative filtration yields the homology decomposition:
\begin{equation} \label{eq: Wu's decomposition}
\redH_*(\Loop (\R P^\infty\smash X);\F_2) = \Dirsum_{s=0}^\infty \redH_*(X^{\smash s}/\widehat{\Delta}_s;\F_2)
\end{equation}
If $G=C_2$ acts on $X$ trivially, then $X$ coincides with its orbit space $X/G$. This induces an isomorphism of simplicial sets for each $r$:
\[
(X/G)^{\smash s}/\tilDel_s \isom X^{\smash s}/\widehat{\Delta}_s
\]
The 1-stunted reduced Borel construction has the following geometric realization for the trivial action:
\[
|X\rtimes_G W_\infty^1 G|\simeq \R P^\infty\smash X
\]
Therefore our homology decomposition in Theorem \ref{thm: homology decomp of JGX} generalizes \eqref{eq: Wu's decomposition}.

%---------------------section 4-------------------------------------------------------------------------
\section{Proof of Theorem \ref{thm: homology decomp of tilDel}} \label{sec: MV ss}

We have shown in the previous section that, if the orbit projection has a section, then $\redH_*(J^G[X];\F_2) \isom \Dirsum\redH_*\left((X/G)^{\smash s}/\tilDel_s;\F_2\right)$. The pinched set $\tilDel_s$ can be written as the following union (see Corollary \ref{cor: decomposition of tilDel}):
\begin{equation} \label{eq: decomposition of tilDel}
\left(\rdiag(A)\smash (X/G)^{\smash s-2}\right) \union \left((X/G)\smash \rdiag(A)\smash (X/G)^{\smash s-3}\right)\union \cdots \union
\left((X/G)^{\smash s-2} \smash \rdiag(A)\right).
\end{equation}
Here $\rdiag(A):=\{aG\smash aG|\, a\in A\}\subset (X/G)^{\smash 2}$.

Given a pointed simplicial set $Y$ written as a union $Y_1\union\cdots\union Y_N$ of pointed simplicial subsets, the Mayer-Vietoris spectral sequence allows one to approximate the homology of $Y$ in terms of the homology of the intersections of the $Y_i$'s. Expression \eqref{eq: decomposition of tilDel} suggests using the Mayer-Vietoris spectral sequence to study the homology of $\tilDel_s$. This can be combined with Theorem \ref{thm: homology decomp of JGX} to obtain further information about the mod 2 homology of $J^G[X]$. We illustrate this in Proposition \ref{prop: S2 S1 example}.

We briefly review the Mayer-Vietoris spectral sequence. References for this spectral sequence are \cite{Cai11,CLW11,Hatcher04}. Suppose that $Y=Y_1\union\cdots \union Y_N$ is a pointed simplicial set with each $Y_i$ a pointed simplicial subset of $Y$. Associated with $Y$ is an abstract
simplicial complex $K$ with vertices $1, 2,\dots, N$ and $\{i_1,\dots,
i_p\}\in K$ for $Y_{i_1}\intersect \cdots \intersect Y_{i_p}$. For each $\calI=\{i_1,\dots, i_p\}\in K$, define $Y_\calI=Y_{i_1}\intersect \cdots \intersect Y_{i_p}.$ In particular $Y_\emptyset =Y$.

For any simplicial set $W$, let $\Z W$ denote the free simplicial abelian group on $W$. One has a chain complex $(\Z W,\del)$:
$$\Z W_0\xleftarrow{\del} \Z W_1 \xleftarrow{\del} \Z W_2\xleftarrow{\del}\cdots,$$ where $\del=\sum_{i=0}^n (-1)^i d_i$ and $d_i$ is the $i$th face of the simplicial abelian group $\Z W$. The homology of this chain complex is the integral homology of $W$ (see Page 5 in \cite{GeorssJardine99}):
\[
H_*(W;\Z)\isom H_*(\Z W,\del).
\]
If $W$ is pointed, its mod 2 reduced homology of $W$ is given by:
$$\redH_*(W; \F_2)= \coker\left(H_*(\Z\! \pt\tensor \F_2,\del) \to H_*(\Z W\tensor \F_2,\del)\right).$$

Let $E_{p,q}=\Dirsum_{\# \calI=p} (\Z Y_{\calI}\tensor \F_2)_q$ where $\# \calI$ denotes the number of elements in the set $\calI$. Then $E=\Dirsum_{p,q} E_{pq}$ is a double complex. For $\alpha_q^\calI\in (\Z Y_{\calI}\tensor \F_2)_q$, the vertical differential is $\del^{v}(\alpha_q^\calI):=\del\alpha_q^\calI$, which is the above differential of the chain complex $\Z Y_{\calI}\tensor \F_2$. For $\alpha_q^\calI\in (\Z Y_{\calI}\tensor \F_2)_q$ where $\calI=\{i_1,\ldots, i_p\}$, the horizontal differential is $\del^h(\alpha_q^\calI):=\alpha_q^{\del \calI}:=\sum_{j=1}^p (-1)^j \alpha_q^{\partial_j \calI}$ where $\partial_j \calI:=(i_1,\ldots, \hat{i_j},\ldots, i_p)$ has $p-1$ elements by omitting the $j$th term. Here $\alpha_q^{\partial_j \calI}$ is an element of $(\Z Y_{\partial_j \calI}\tensor \F_2)_q$ via the inclusion $Y_{\calI}\into Y_{\partial_j \calI}$.
%The detail of the lattice diagram can be found in the book \cite{Weibel}.

Write $E_p=\Dirsum_q E_{p,q}$. The homology of $E_0$ is the mod 2 homology of $Y$ (see \cite{Hatcher04}):
$$\redH_*(Y;\F_2)\isom H_*(E_0).$$
There is an exact sequence (see Page 94 in \cite{BottTu82}):
$$0\to E_{N}\xrightarrow{\partial_N^h} \cdots \xrightarrow{\partial_1^h} E_0 \to 0.$$

Denote $F_0=\im \partial_1^h, \ldots, F_{N-2}=\im \partial_{N-1}^h, F_{N-1}=\im \partial_{N}^h$. Then we have the short exact sequences
\begin{align*}
0\to E_{N} &\to F_{N-1} \to 0 \\
0\to F_{N-1} &\to E_{N-1} \to F_{N-2} \to 0 \\
0\to F_{N-2} &\to E_{N-2} \to F_{N-3} \to 0 \\
&\vdots\\
0\to F_{1} &\to E_{1} \to F_{0} \to 0.
\end{align*}
With respect to the differential $\partial^v\co E_{p,q}\to E_{p,q-1}$, we obtain long exact sequences
\begin{align*}
\cdots \rightarrow H_q(F_{N-2}) \xrightarrow{i} H_q(E_{N-2}) \xrightarrow{j} H_q (F_{N-3}) \xrightarrow{\zeta} H_{q-1}(F_{N-2}) \to \cdots \\
\vdots \\
\cdots \rightarrow H_q(F_1) \xrightarrow{i} H_q(E_1) \xrightarrow{j} H_q (F_0) \xrightarrow{\zeta} H_{q-1}(F_1) \to \cdots
\end{align*}
This long exact sequence can be written as an exact couple where $i$ has bidegree $(0,0)$, $j$ has bidegree $(0,-1)$ and $\zeta$ has bidegree $(-1,1)$:
\begin{diagram}
H_*(F_*) & \rTo^{(0,0)}_{i}                  & H_*(E_*) \\
         & \luTo(1,2)_{(-1,1)}^{\zeta} \ldTo(1,2)^{j}_{(0,-1)} &     \\
         & H_*(F_*).
\end{diagram}

The resulting spectral sequence is the Mayer-Vietoris spectral sequence
$$\{E^r_{p,q}(X_1\union\cdots \union X_N), d^r\}\Rightarrow H_{p+q-1}(E_0)=\redH_{p+q-1}(X;\F_2),$$
where the $r$th differential $d^r\co E^r_{p,q}\to E^r_{p-r,q+r-1}$ is induced by $i\circ \zeta^{-r+1}\circ j$ for $r\geq1$. Note $H_t(E_0)=\bigoplus_{p+q-1=t}E^\infty_{p,q}$. The $E^1$ term of this spectral sequence is
\begin{equation} \label{eq: generic E1 term}
E^1=\Dirsum_{p+q-1=t}\Dirsum_{\substack{X_\calI\neq \emptyset \\ \# \calI=p\ge 1}} \redH_q(X_\calI;\F_2).
\end{equation}

For the rest of this paper, we write $\redH(\bullet)$ as $\redH(\bullet;\F_2)$ for short. Recall from the introduction that the pointed simplicial subset $\tilDel_s$ of $(X/G)^{\smash s}$ is defined as follows. Set $\tilDel_0=\tilDel_1:=\pt$ and:
\[
\tilDel_s:=\{x_1G\smash \cdots \smash x_s G\in (X/G)^{\smash s}|\, \exists i=1,\ldots, s-1\, (x_i=x_{i+1}\in A)\} \quad (s\ge 2.)
\]
These simplicial sets $\tilDel_s$ have the following alternative inductive definition.

\begin{proposition} \label{prop: ind def of tilDel}
The simplicial sets $\tilDel_s$ can be defined inductively by:
\begin{align*}
\tilDel_0 &= \tilDel_1=\pt, \\
\tilDel_2 &= \rdiag(A), \\
\tilDel_s &= \left(\tilDel_{s-1}\smash (X/G) \right) \union \left((X/G)^{\smash s-2} \smash \rdiag(A)\right), \quad s\ge 3.
\end{align*}
\end{proposition}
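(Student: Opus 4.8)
The plan is to prove the proposition by unwinding the defining description of $\tilDel_s$ as the pointed simplicial subset of $(X/G)^{\smash s}$ whose non-basepoint simplices are the smash products $x_1G\smash\cdots\smash x_sG$ admitting an index $i\in\{1,\ldots,s-1\}$ with $x_i=x_{i+1}\in A$. Throughout I would use the standard associativity identification $(X/G)^{\smash s-1}\smash(X/G)\isom(X/G)^{\smash s}$, together with the observation that $\rdiag(A)$ is the image of the composite simplicial map $A\to A\smash A\to(X/G)^{\smash 2}$ given by the reduced diagonal of $A$ followed by the (injective, since $A$ is fixed) simplicial map $A\to X/G$; being the image of a simplicial map, $\rdiag(A)$ is a pointed simplicial subset of $(X/G)^{\smash 2}$.

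The lines $\tilDel_0=\tilDel_1=\pt$ merely restate the original convention, so there is nothing to check there. For $s=2$, a simplex of $\tilDel_2$ is either the basepoint or of the form $x_1G\smash x_2G$ with $x_1=x_2\in A$, i.e.\ of the form $aG\smash aG$ with $a\in A$; this identifies $\tilDel_2$ with $\rdiag(A)$. For $s\ge 3$ I would prove the two inclusions separately. For $\supseteq$: if $w\in\tilDel_{s-1}\smash(X/G)$, write $w=x_1G\smash\cdots\smash x_sG$ where the first $s-1$ smash coordinates form an element of $\tilDel_{s-1}$, so some $i\le s-2$ satisfies $x_i=x_{i+1}\in A$ and hence $w\in\tilDel_s$; and if $w\in(X/G)^{\smash s-2}\smash\rdiag(A)$, then $w=x_1G\smash\cdots\smash x_{s-2}G\smash aG\smash aG$ with $a\in A$, so the index $i=s-1$ witnesses $w\in\tilDel_s$. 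For $\subseteq$: given a non-basepoint $w=x_1G\smash\cdots\smash x_sG\in\tilDel_s$ with witnessing index $i$, either $i\le s-2$, in which case $x_1G\smash\cdots\smash x_{s-1}G\in\tilDel_{s-1}$ and $w\in\tilDel_{s-1}\smash(X/G)$, or $i=s-1$, in which case $x_{s-1}G\smash x_sG\in\rdiag(A)$ and $w\in(X/G)^{\smash s-2}\smash\rdiag(A)$. The basepoint lies in every pointed simplicial subset and so needs no separate argument.

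I do not expect any genuine obstacle here; the content is purely combinatorial bookkeeping. The only points needing a line of justification are that the two subsets on the right-hand side really are pointed simplicial subsets of $(X/G)^{\smash s}$ — the predicate ``$x_i=x_{i+1}\in A$'' is preserved by the diagonally acting face and degeneracy operators because $A$ is a simplicial subset of $X/G$, and a smash product of pointed simplicial subsets embeds as a pointed simplicial subset of the smash product — and keeping the index ranges straight, in particular noting that when $s=3$ the factor $(X/G)^{\smash s-2}$ is a single copy of $X/G$, so that $\tilDel_3=\bigl(\rdiag(A)\smash(X/G)\bigr)\union\bigl((X/G)\smash\rdiag(A)\bigr)$, in agreement with the decomposition \eqref{eq: decomposition of tilDel}.
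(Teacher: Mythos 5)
Your proof is correct and takes essentially the same approach as the paper: both arguments case-split on the witnessing index $i$ (either $i\le s-2$, giving the $\tilDel_{s-1}\smash(X/G)$ factor, or $i=s-1$, giving the $(X/G)^{\smash s-2}\smash\rdiag(A)$ factor). You spell out both inclusions where the paper does one and calls the reverse similar, and you correctly use the exponent $s-2$ throughout, whereas the paper's proof text contains a slip writing $(X/G)^{\smash s-1}\smash\rdiag(A)$, which would be a subset of $(X/G)^{\smash s+1}$ rather than of $(X/G)^{\smash s}$.
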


\begin{proof}
We have $\tilDel_0 = \tilDel_1 = \pt$ by definition. It is easy to check that $\tilDel_2 = \rdiag(A)$. We will show that
$$\tilDel_s = \left(\tilDel_{s-1}\smash (X/G)\right) \union \left((X/G)^{\smash s-1} \smash \rdiag(A)\right).$$
Let an element $x_1G\smash \cdots \smash x_s G$ of $\tilDel_s$ be given. There are two cases: either $x_{s-1}=x_s\in A$ or $x_i=x_{i+1}\in A$ for some $1\le i<s-1$. In the former case $x_1G\smash \cdots \smash x_s G$ belongs to $(X/G)^{\smash s-1} \smash \rdiag(A)$. In the latter case $x_1G\smash \cdots \smash x_s G$ belongs to $\tilDel_{s-1}\smash (X/G)$. Hence in either case $x_1G\smash \cdots \smash x_s G$ belongs to the union $\tilDel_{s-1}\smash (X/G) \union (X/G)^{\smash s-1} \smash \rdiag(A)$. This proves one inclusion.

The proof of the reverse inclusion is similar.
\end{proof}

\begin{corollary} \label{cor: decomposition of tilDel}
For $s\ge 2$, the simplicial set $\tilDel_s$ decomposes into the following union:
$$\left(\rdiag(A)\smash (X/G)^{\smash s-2}\right) \union \left((X/G)\smash \rdiag(A)\smash (X/G)^{\smash s-3}\right)\union \cdots\union
\left((X/G)^{\smash s-2} \smash \rdiag(A)\right).$$
\end{corollary}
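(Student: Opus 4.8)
The plan is to prove this by induction on $s$, feeding off the inductive description of $\tilDel_s$ in Proposition \ref{prop: ind def of tilDel} together with the elementary fact that smashing with a fixed pointed simplicial set distributes over unions of pointed simplicial subsets, i.e.\ $\left(Z_1\union\cdots\union Z_k\right)\smash W=(Z_1\smash W)\union\cdots\union(Z_k\smash W)$ whenever the $Z_i$ are pointed simplicial subsets of a common pointed simplicial set $Z$. The base case $s=2$ is immediate: the union in the statement has the single summand $\rdiag(A)\smash(X/G)^{\smash 0}=\rdiag(A)$, which is exactly $\tilDel_2$ by Proposition \ref{prop: ind def of tilDel}.

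For the inductive step I would fix $s\ge 3$ and assume
\[
\tilDel_{s-1}=\Union_{i=0}^{s-3}(X/G)^{\smash i}\smash\rdiag(A)\smash(X/G)^{\smash s-3-i}.
\]
Smashing on the right with $(X/G)$ and applying distributivity gives
\[
\tilDel_{s-1}\smash(X/G)=\Union_{i=0}^{s-3}(X/G)^{\smash i}\smash\rdiag(A)\smash(X/G)^{\smash s-2-i}.
\]
By Proposition \ref{prop: ind def of tilDel} one has $\tilDel_s=\left(\tilDel_{s-1}\smash(X/G)\right)\union\left((X/G)^{\smash s-2}\smash\rdiag(A)\right)$, and the second summand is precisely the $i=s-2$ term of the target union. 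Combining the two displays then yields $\tilDel_s=\Union_{i=0}^{s-2}(X/G)^{\smash i}\smash\rdiag(A)\smash(X/G)^{\smash s-2-i}$, which is the asserted decomposition.

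I do not anticipate a genuine obstacle; the only point to be slightly careful about is the distributivity of $\smash$ over unions of sub-simplicial-sets, which one can either quote or verify levelwise by noting that $\smash$ is a levelwise quotient of the levelwise cartesian product and that $\times$ distributes over unions of subsets. One could alternatively reorganize the whole argument as a direct double inclusion mirroring the proof of Proposition \ref{prop: ind def of tilDel}, but the inductive route above is the most economical.
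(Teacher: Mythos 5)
Your proof is correct and mirrors the paper's own argument: the paper also inducts on $s$, invokes Proposition \ref{prop: ind def of tilDel}, and uses the distributivity of $\smash$ over unions (the paper just phrases the intermediate steps in its $\barDel^\alpha$ multi-index shorthand rather than writing out the union explicitly as you do).
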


Before we prove this corollary, we introduce multi-index notation to abbreviate the expressions. Recall from the introduction that a multi-index $\alpha=(\alpha_1,\ldots, \alpha_d)$ is a (possibly empty) sequence of positive integers.

\begin{definition}
For $k\ge 2$, let $\barDel^{k}(A)$ denote the pointed simplicial subset of $(X/G)^{\smash k}$ whose elements are $\underbrace{aG\smash \cdots \smash aG}_k$ for some $a\in A$. We set $\barDel^{1}(A):=X/G$. For a multi-index $\alpha=(\alpha_1,\ldots, \alpha_d)$, denote by $\barDel^\alpha$ the pointed simplicial set $\barDel^{\alpha_1}(A)\smash \cdots \smash \barDel^{\alpha_d}(A)$.
\end{definition}

The pointed simplicial set $\barDel^{\alpha}$ is a subset of $$(X/G)^{\smash \alpha_1} \smash \cdots \smash (X/G)^{\smash \alpha_d}=(X/G)^{\smash \alpha_1+\cdots +\alpha_d}=(X/G)^{\smash |\alpha|}.$$

\begin{proof}[Proof of Corollary \ref{cor: decomposition of tilDel}]
We proceed by induction. For $s=2$, the RHS reduces to $\rdiag(A)$. Indeed Proposition \ref{prop: ind def of tilDel} says that $\tilDel_2= \rdiag(A)$.

Suppose the above decomposition holds for some $s\ge 2$. In the shorthand notation, the induction hypothesis becomes
$$\tilDel_s=\barDel^{(2,1,\ldots, 1)}\union \barDel^{(1,2,\ldots, 1)}\union\cdots\union \barDel^{(1,1\ldots, 2)},$$
where all the multi-indices are of length $s$. Thus
\begin{align*}
\tilDel_{s+1} &= \left(\tilDel_{s}\smash (X/G) \right) \union \left((X/G)^{\smash s} \smash \rdiag(A)\right) \\
          &= \left[\left(\barDel^{(2,1,\ldots, 1)}\union \barDel^{(1,2,\ldots, 1)}\union\cdots\union \barDel^{(1,1\ldots, 2)}\right)\smash (X/G)\right] \union \barDel^{(1,1,\ldots, 1,2)} \\
          &= \barDel^{(2,1,\ldots, 1,1)}\union \barDel^{(1,2,\ldots, 1,1)}\union \cdots \union\barDel^{(1,1,\ldots, 2,1)}\union\barDel^{(1,1,\ldots, 1,2)}.
\end{align*}
Then we can use $\barDel^{\beta}\smash (X/G)=\barDel^{(\beta,1)}$ in the last line. This proves the induction step.
\end{proof}

Recall from the introduction that a pointed simplicial map $f\co Y\to Z$ is {\textit{mod 2 homologous to zero}} if the induced map on homology $\redH_*(Y;\F_2)\to \redH_*(Z;\F_2)$ is the zero map. Since we are using homology with coefficients in $\F_2$ throughout, we throw out the reference to \lq\lq mod $2$\rq\rq. Let $f_1,\ldots, f_k$ be pointed simplicial maps. If $f_i$ is homologous to zero for some $i=1,\ldots, k$, then the smash product $f_1\smash \cdots \smash f_k$ is homologous to zero. This is because the induced map $(f_1\smash \cdots \smash f_k)_*$ on homology is just the tensor product $(f_1)_*\tensor \cdots \tensor (f_k)_*$.

\begin{proposition} \label{prop: inc are homologous to zero}
Let $\alpha$ and $\beta$ be multi-indices of length $s$. Suppose that the reduced diagonal map of $A$ is homologous to zero. If $\barDel^\alpha$ is a proper subset of $\barDel^\beta$, then the inclusion $\barDel^\alpha\into\barDel^\beta$ is homologous to zero.
\end{proposition}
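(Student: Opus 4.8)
The plan is to reduce everything to the hypothesis on the reduced diagonal of $A$ by exhibiting the inclusion $\barDel^\alpha\into\barDel^\beta$ as a smash product of maps, one factor of which visibly factors through the reduced diagonal $A\to A\smash A$ and is therefore homologous to zero; the smash product then inherits this property by the remark (Künneth over $\F_2$) recorded just before the statement.

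First I would set up the bookkeeping. Identify a multi-index $\gamma=(\gamma_1,\dots,\gamma_d)$ of length $s$ with the ordered partition of $\{1,\dots,s\}$ into the consecutive blocks $I^\gamma_1,\dots,I^\gamma_d$ of sizes $\gamma_1,\dots,\gamma_d$. Unwinding the definitions, $\barDel^\gamma$ is the sub-simplicial set of $(X/G)^{\smash s}$ of those $x_1G\smash\cdots\smash x_sG$ for which, on each block $I^\gamma_k$ with $\gamma_k\ge 2$, all the coordinates $x_i$ with $i\in I^\gamma_k$ are equal and lie in $A$; moreover there is a canonical isomorphism $\barDel^\gamma\isom A^{\smash a}\smash (X/G)^{\smash b}$, where $a$ (resp.\ $b$) is the number of blocks of $\gamma$ of size $\ge 2$ (resp.\ of size $1$), each $\barDel^k(A)$ with $k\ge 2$ being identified with $A$ via $a\mapsto aG\smash\cdots\smash aG$ and $\barDel^1(A)$ being $X/G$ by convention. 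From this description I would read off two combinatorial facts: (1) $\barDel^\alpha\subseteq\barDel^\beta$ exactly when the partition of $\beta$ refines that of $\alpha$, i.e.\ every block of $\beta$ lies inside a block of $\alpha$; and (2) granting (1), $\barDel^\alpha=\barDel^\beta$ unless some block $I^\alpha_{k_0}$ of $\alpha$ of size $\ge 2$ is cut by $\beta$ into $\ell\ge 2$ pieces. Since $\barDel^\alpha$ is here a \emph{proper} subset of $\barDel^\beta$, both (1) holds and such a block $I^\alpha_{k_0}$ exists.

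Using (1) I would then identify $\iota\co\barDel^\alpha\into\barDel^\beta$ with a smash product $\iota_1\smash\cdots\smash\iota_d$ indexed by the blocks of $\alpha$: because (1) forces every singleton block of $\alpha$ to be a singleton block of $\beta$, the constraints defining $\barDel^\alpha$ and $\barDel^\beta$ are blockwise with respect to the $\alpha$-partition, so grouping the smash factors of $\barDel^\beta$ by which block of $\alpha$ they sit inside turns the inclusion of subsets of $(X/G)^{\smash s}$ into a smash product of block-inclusions. On a singleton block of $\alpha$, $\iota_k=\id_{X/G}$; on a block $I^\alpha_k$ of size $\ge 2$ cut by $\beta$ into pieces of sizes $r_1,\dots,r_{\ell_k}$, the map $\iota_k\co A\isom\barDel^{|I^\alpha_k|}(A)\to\barDel^{r_1}(A)\smash\cdots\smash\barDel^{r_{\ell_k}}(A)$ is the $\ell_k$-fold reduced diagonal $a\mapsto a\smash\cdots\smash a$ followed by $A\into X/G$ on the pieces of size $1$. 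For $k=k_0$ one has $\ell_{k_0}=\ell\ge 2$, so $\iota_{k_0}$ factors as $A\xrightarrow{\text{red.\ diag.}}A\smash A\to\cdots$ and is homologous to zero since the reduced diagonal of $A$ is; hence so is $\iota=\iota_1\smash\cdots\smash\iota_d$. The only real work is the bookkeeping of this last step---making the identification of $\iota$ with $\iota_1\smash\cdots\smash\iota_d$ precise by tracking the orders of the smash factors and the isomorphisms $\barDel^k(A)\isom A$---together with the verification of (1) and (2) (where one must also dispatch the trivial degenerate cases in which $A$ or $X/G$ is a point); once these are in hand the homological conclusion is immediate.
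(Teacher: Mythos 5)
Your proof is correct and follows essentially the same route as the paper's: establish that the higher reduced diagonals $d_k$ ($k\ge 2$) are homologous to zero by factoring through $d_2$, decompose the inclusion as a smash product of block-inclusions indexed by the blocks of $\alpha$, and observe that properness forces at least one such factor to factor through a higher reduced diagonal. Your bookkeeping is in fact a bit cleaner than the paper's, which asserts "each inclusion $\barDel^{\alpha_j}(A)\into\barDel^{\gamma^{(j)}}$ reduces to the case above, hence it is homologous to zero" and only then adds the corrective parenthetical "(Actually one $j$ is enough.)"; you instead isolate from the start the block $I^{\alpha}_{k_0}$ that $\beta$ genuinely subdivides.
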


\begin{proof}
The higher reduced diagonal map $d_k\co A\to A^{\smash k}$ is given by $a\mapsto\underbrace{a\smash\cdots\smash a}_k$ for $a\in A_n.$
We first show that for $k\ge 2$, the higher reduced diagonal map $d_k\co A\to A^{\smash k}$ is homologous to zero. This map is a monomorphism with image $\barDel^k(A)\isom A$. We can write $d_k$ as a composite:
$$d_k:\co A\to A\smash A \xrightarrow{1_A \smash d_{k-1}}A\smash \barDel^{k-1}(A)\into A^{\smash k}.$$
Since the first map is homologous to zero by assumption, $d_k$ is homologous to zero.

Now we return to the proposition. First consider the case where $\barDel^\alpha$ is just $\barDel^s(A)$, that is the case where $\dim\alpha=1$. Since $\barDel^\alpha$ is a proper subset of $\barDel^\beta$ by assumption, $e:=\dim\beta \ge 2$. There is a commutative diagram
\begin{diagram}
\barDel^s(A)                    & \rEmbed                   & \barDel^\beta \\
\uTo<{d_s}>{\isom}              &                           & \uTo>{d_{\beta_1}\smash \cdots \smash d_{\beta_{e}}} \\
A                               & \rTo_{d_e}        & \underbrace{A\smash \cdots \smash A}_{e}.
\end{diagram}
Since $e\ge 2$, the reduced diagonal map $d_e$ is homologous to zero from what we have shown above. Since $A\xrightarrow{d_s}\barDel^s(A)$ is an isomorphism, the inclusion $\barDel^s(A)\into \barDel^\beta$ is homologous to zero.

Finally we prove the general case where $\dim\alpha=d >1$. Since $\barDel^\alpha$ is a proper subset of $\barDel^\beta$ by assumption, we can decompose the multi-index $\beta$ into $\beta=(\gamma^{(1)},\ldots, \gamma^{(d)})$ such that $\alpha_1=|\gamma^{(1)}|,\ldots, \alpha_d=|\gamma^{(d)}|$. Thus the inclusion map $\barDel^\alpha\into\barDel^\beta$ decomposes into a smash product of the inclusions $\barDel^{\alpha_j}(A)\into \barDel^{\gamma^{(j)}}$:
\begin{diagram}
\barDel^\alpha & \rEmbed & \barDel^\beta\\
\dEquals       &         & \dEquals      \\
\barDel^{\alpha_1}(A)\smash \cdots \smash \barDel^{\alpha_d}(A) &\rTo &\barDel^{\gamma^{(1)}}\smash \cdots\smash \barDel^{\gamma^{(d)}}.
\end{diagram}
Each inclusion $\barDel^{\alpha_j}(A)\into \barDel^{\gamma^{(j)}}$ reduces to the case above. Hence it is homologous to zero. Thus after taking the smash product, the inclusion $\barDel^\alpha\into\barDel^\beta$ is homologous to zero. (Actually one $j$ is enough.)
\end{proof}

\begin{example} \label{eg: weak category}
If $A=\Sigma Y$, then the reduced diagonal map of $A$ is null-homotopic and thus homologous to zero. The {\emph{weak category}} of a space $A$ is the least $k$ such that the higher reduced diagonal $A\to \underbrace{A\smash \cdots \smash A}_k$ is null-homotopic (see Definition 2.2 of \cite{BersteinHilton60}). For example, a non-contractible suspension space has weak category 2. Berstein and Hilton \cite{BersteinHilton60} introduced the notion of weak category to study the Lusternik-Schnirelmann category. The {\textit{Lusternik-Schnirelmann category}} of a topological space $X$ is the smallest integer number $k$ such that there is an open covering $\{U_i\}_{1\leq i\leq k}$ of $X$ with the property that each inclusion map $U_i\hookrightarrow X$ is null-homotopic (see \cite{Fox39} and the references therein.)
\end{example}

\begin{proposition} \label{prop: intersections as smash prod}
The collections $$\{\barDel_\calI|\, \calI\subset \{(2,1,\ldots, 1),\ldots, (1,1\ldots, 2)\}, \# \calI =p\}$$ and $\{\barDel^\alpha|\, \dim\alpha=s-p\}$ are equal for $p=1,\ldots, s-1$. Here all the multi-indices are of length $s$.
\end{proposition}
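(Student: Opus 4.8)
The plan is to set up an explicit correspondence between $p$-element subsets of the special collection $\{(2,1,\dots,1),\dots,(1,1,\dots,2)\}$ and multi-indices of length $s$ and dimension $s-p$, under which $\barDel_\calI$ is carried to $\barDel^\alpha$; the two inclusions of collections then follow at once. Identify the special multi-index with a $2$ in coordinate $k$ (and $1$'s elsewhere) with the integer $k\in\{1,\dots,s-1\}$, so that a $p$-element subset $\calI$ of the special collection is the same thing as a $p$-element subset of $\{1,\dots,s-1\}$. Unwinding the definition, $\barDel^{(1,\dots,2,\dots,1)}$ with the $2$ in slot $k$ is precisely the pointed simplicial subset of $(X/G)^{\smash s}$ consisting of those $x_1G\smash\cdots\smash x_sG$ with $x_k=x_{k+1}\in A$; hence $\barDel_\calI=\Intersect_{k\in\calI}\barDel^{(1,\dots,2,\dots,1)}$ consists of exactly those $x_1G\smash\cdots\smash x_sG$ with $x_k=x_{k+1}\in A$ for every $k\in\calI$.

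The key device is to attach to a subset $\calI\subseteq\{1,\dots,s-1\}$ the graph on the vertex set $\{1,\dots,s\}$ having an edge $\{k,k+1\}$ for each $k\in\calI$. This graph is a subforest of the path $1\!-\!2\!-\!\cdots\!-\!s$ with $p$ edges, so it has exactly $s-p$ connected components, each of which is an interval of consecutive integers; listing these components from left to right and recording their sizes produces a multi-index $\alpha(\calI)$ of length $s$ with $\dim\alpha(\calI)=s-p$. The crux is the identification $\barDel_\calI=\barDel^{\alpha(\calI)}$. Within a component $[a,b]$ one has $a,a+1,\dots,b-1\in\calI$, so the constraints $x_a=x_{a+1}\in A$, $x_{a+1}=x_{a+2}\in A$, and so on, chain up to force $x_a=x_{a+1}=\cdots=x_b$, all lying in $A$; a singleton component imposes no constraint, which matches $\barDel^1(A)=X/G$. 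So an element of $\barDel_\calI$ is exactly one whose block of coordinates over each component lies in $\barDel^{(\text{block size})}(A)$, and this is the defining condition for membership in $\barDel^{\alpha(\calI)}$.

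Granting this identification, one inclusion of collections is immediate: every $\barDel_\calI$ with $\#\calI=p$ equals $\barDel^{\alpha(\calI)}$ with $\dim\alpha(\calI)=s-p$. For the reverse inclusion, given a multi-index $\alpha=(\alpha_1,\dots,\alpha_{s-p})$ of length $s$, I would partition $\{1,\dots,s\}$ into consecutive intervals of sizes $\alpha_1,\dots,\alpha_{s-p}$ and take $\calI$ to be the set of all $k$ for which $k$ and $k+1$ lie in a common interval; then $\#\calI=\sum_m(\alpha_m-1)=s-(s-p)=p$, the connected components of the graph of $\calI$ are precisely those intervals, and hence $\barDel_\calI=\barDel^\alpha$ by the previous paragraph. (In fact $\calI\mapsto\alpha(\calI)$ is then a bijection from subsets of $\{1,\dots,s-1\}$ onto multi-indices of length $s$, with the construction just described as its inverse, since $k\in\calI$ precisely when $k$ and $k+1$ lie in a common component; this is not needed for the statement but shows the two collections even match term by term.) I expect the whole argument to be routine; the one step demanding genuine care is the identification $\barDel_\calI=\barDel^{\alpha(\calI)}$ — specifically, verifying that the family of constraints $\{x_k=x_{k+1}\in A:k\in\calI\}$ is equivalent to the block-wise constraints defining $\barDel^{\alpha(\calI)}$, the substance being transitivity of equality along each component interval (and that distinct components impose no constraint on one another).
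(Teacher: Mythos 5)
Your argument is correct, and it takes a genuinely different and in fact tighter route than the paper's. The paper proves the statement by induction on $p$: it assumes $\barDel_{\calJ}=\barDel^\beta$ for $\calJ$ obtained by dropping the largest index $j_p$, decomposes $\beta$ into $(\beta',1,\dots,1)$ using the location of $j_{p-1}$, and then splits into two cases ($j_p=j_{p-1}+1$ versus $j_p>j_{p-1}+1$) to produce the new multi-index of dimension $s-p$. That establishes the containment $\{\barDel_\calI:\#\calI=p\}\subseteq\{\barDel^\alpha:\dim\alpha=s-p\}$, with the reverse containment left implicit. You instead build the bijection directly: identifying each special multi-index $(1,\dots,2,\dots,1)$ with an integer $k\in\{1,\dots,s-1\}$, passing to the subforest of the path $1\text{--}2\text{--}\cdots\text{--}s$, and reading off component sizes is exactly the standard correspondence between $p$-subsets of $\{1,\dots,s-1\}$ and compositions of $s$ into $s-p$ parts. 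This is both more transparent and more complete --- you get both inclusions of the two collections (indeed a term-by-term bijection $\calI\mapsto\alpha(\calI)$) at once, with no case split, and the one nontrivial point (that $\barDel_\calI=\barDel^{\alpha(\calI)}$) is isolated cleanly as chaining equalities along each interval of consecutive constrained indices. It is also worth noting that your explicit bijection is what is silently being used later in the paper when $c_{\lambda,\mu}$ is computed by counting multi-indices; the paper's one-sided induction does not by itself justify that count, whereas your argument does.
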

\begin{proof}
Recall that $\barDel_\calI=\Intersect_{\alpha\in \calI} \barDel^\alpha$. We proceed by induction on $p$. The base step $p=1$ is obvious.

Let $\calI=\{\gamma^{(j_1)},\ldots, \gamma^{(j_p)}\}$ where $j_1<\cdots < j_p$ and $\gamma^{(j)}$ is the multi-index $(1,\ldots, 2,\ldots, 1)$ with $2$ as the $j$th entry. By the inductive hypothesis, there exists some $\beta$ of dimension $s-(p-1)=s-p+1$ such that $\barDel_\calJ=\barDel^\beta$ where $\calJ=\{\gamma^{(j_1)},\ldots, \gamma^{(j_{p-1})}\}$. Recall that $\barDel_\calJ=\barDel^{\gamma^{(j_1)}}\intersect\cdots\intersect\barDel^{\gamma^{(j_{p-1})}}$. Since $j_{p-1}$ is the largest term in this intersection, we can decompose $\beta$ into $(\beta',\underbrace{1,\ldots, 1}_{s-j_{p-1}-1})$. Since $\beta$ has dimension $s-p+1$, $\beta'$ has dimension $$(s-p+1)-(s-j_{p-1}-1)=j_{p-1}-p+2.$$ There are two cases: either $j_{p}=j_{p-1}+1$ or $j_{p}>j_{p-1}+1$.

Consider the case where $j_p=j_{p-1}+1$. Then $\barDel_\calI=\barDel^\delta$ where, writing $e:=\dim\beta'$,
$$\delta=(\beta'_1,\ldots,\beta'_{e-1},\beta'_e + 1,\underbrace{1,\ldots, 1}_{s-j_{p-1} - 2}).$$
Then $$\dim\delta = e + (s-j_{p-1} - 2)=(j_{p-1}-p+2)+(s-j_{p-1} - 2)=s-p,$$ which proves the induction step for this case.

Next consider the case where $j_{p}>j_{p-1}+1$. Recall $\beta=(\beta',\underbrace{1,\ldots, 1}_{s-j_{p-1}-1})$. Then $\barDel_\calI=\barDel^\epsilon$. Here $\epsilon$ is modified from $\beta$ by contracting an adjacent pair of $1$'s at the $j_p$-th and $(j_p+1)$-th places into a $2$. In any case $\dim\epsilon = \dim\beta - 1 = s-p$. This proves the induction step for this cases and completes the whole proof.
\end{proof}

\begin{corollary} \label{cor: prep for E1 collapse}
Let $\calI\subset \{(2,1,\ldots, 1),\ldots, (1,1\ldots, 2)\}$ where the multi-indices are of length $s$. For each $j=1,\ldots, \# \calI$, the inclusion map $\barDel_\calI\into \barDel_{\partial_j \calI}$ is homologous to zero.
\end{corollary}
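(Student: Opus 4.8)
The plan is to deduce the statement from Propositions \ref{prop: intersections as smash prod} and \ref{prop: inc are homologous to zero}: the former to rewrite both $\barDel_\calI$ and $\barDel_{\partial_j\calI}$ as $\barDel^{(\cdot)}$ of a single multi-index, the latter to conclude. Throughout I write $p:=\#\calI$, and I work under the hypothesis (inherited from Proposition \ref{prop: inc are homologous to zero}) that the reduced diagonal map of $A$ is homologous to zero.

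First I would record that the inclusion in the statement is well defined: since $\partial_j\calI$ is obtained from $\calI$ by deleting one multi-index, $\partial_j\calI\subset\calI$, and hence $\barDel_\calI=\Intersect_{\gamma\in\calI}\barDel^\gamma\subseteq\Intersect_{\gamma\in\partial_j\calI}\barDel^\gamma=\barDel_{\partial_j\calI}$. Assume now $p\ge 2$, so that $\partial_j\calI$ is again a nonempty set of multi-indices of length $s$. Applying Proposition \ref{prop: intersections as smash prod} to $\calI$ (which has $p$ elements, $1\le p\le s-1$) produces a multi-index $\alpha$ of length $s$ with $\dim\alpha=s-p$ and $\barDel_\calI=\barDel^\alpha$; applying it to $\partial_j\calI$ (which has $p-1$ elements, $1\le p-1\le s-1$) produces a multi-index $\beta$ of length $s$ with $\dim\beta=s-(p-1)=s-p+1$ and $\barDel_{\partial_j\calI}=\barDel^\beta$.

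The numerical key is that $\dim\alpha=\dim\beta-1<\dim\beta$, so $\alpha\ne\beta$. Since distinct multi-indices of the same length cut out distinct pointed simplicial subsets of $(X/G)^{\smash s}$ — in the degenerate situations where this can fail, for instance $A=\pt$ or $X/G=\pt$, every set in sight collapses to the basepoint and the asserted map is trivially zero — the containment of the previous paragraph is proper: $\barDel^\alpha$ is a proper subset of $\barDel^\beta$. Then Proposition \ref{prop: inc are homologous to zero}, applied to this pair $\alpha,\beta$ of length-$s$ multi-indices with $\barDel^\alpha\subsetneq\barDel^\beta$, says exactly that the inclusion $\barDel^\alpha\into\barDel^\beta$, i.e. $\barDel_\calI\into\barDel_{\partial_j\calI}$, is homologous to zero.

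I expect the only real work to be the bookkeeping in the middle step: checking that both $\calI$ and $\partial_j\calI$ fall in the range covered by Proposition \ref{prop: intersections as smash prod}, and that the dimension count genuinely yields a \emph{proper} containment rather than an equality. The excluded case $p=1$ is genuinely different, since then $\partial_1\calI=\emptyset$ and $\barDel_\emptyset=\tilDel_s$ is no longer of the pure-smash form $\barDel^\beta$ (indeed in that case the inclusion is typically a monomorphism on homology, not the zero map); but in the intended application only the inclusions with $p\ge 2$ feed into the differential $d^1$ on the $E^1$ page of the Mayer--Vietoris spectral sequence, so the case $p=1$ does not actually intervene there.
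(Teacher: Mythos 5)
Your proof takes the same route as the paper's: apply Proposition \ref{prop: intersections as smash prod} to rewrite both $\barDel_\calI$ and $\barDel_{\partial_j\calI}$ as $\barDel^\alpha$ and $\barDel^\beta$ for single multi-indices of length $s$ with $\dim\alpha=s-p$ and $\dim\beta=s-p+1$, then invoke Proposition \ref{prop: inc are homologous to zero}. So this is essentially the paper's argument.

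What you add is worth noting. The paper's one-line proof silently asserts that $\barDel_\calI$ is a \emph{proper} subset of $\barDel_{\partial_j\calI}$, which is exactly the hypothesis Proposition \ref{prop: inc are homologous to zero} needs; you justify this via the dimension count $\dim\alpha<\dim\beta$ and dispose of the degenerate situations ($A$ or $X/G$ trivial) where the conclusion holds for the trivial reason that all reduced homology vanishes. You also correctly flag two imprecisions in the corollary as stated: it omits the standing hypothesis that the reduced diagonal of $A$ is homologous to zero (inherited from Proposition \ref{prop: inc are homologous to zero} and present in Lemma \ref{lem: MV collapses}), and it nominally includes the case $\#\calI=1$, where $\partial_1\calI=\emptyset$, $\barDel_{\partial_1\calI}=\tilDel_s$, and Proposition \ref{prop: intersections as smash prod} no longer applies to the target (indeed, for $s=2$ that inclusion is the identity on $\rdiag(A)$, so the stated conclusion actually fails). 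Your observation that only the $\#\calI\ge 2$ inclusions enter the $d^1$-differential in Lemma \ref{lem: MV collapses}, because $E^1_{0,q}$ is absent from \eqref{eq: generic E1 term}, is the right way to see that the applications are unaffected. In short, your proof is correct, matches the paper's method, and is more careful about the edge cases the paper skips.
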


Recall that if $\calI=\{\gamma^{(1)},\ldots, \gamma^{(k)}\}$, then $\barDel_{\partial_j \calI}=\bigcap_{i\neq j} \barDel^{\gamma^{(i)}}$ is the intersection omitting the $j$th term.

\begin{proof}
Proposition \ref{prop: intersections as smash prod} shows that there exists the multi-indices $\alpha$ and $\beta$ of length $s$ such that $\barDel_\calI=\barDel^\alpha$ and $\barDel_{\partial_j \calI}=\barDel^\beta$. Since $\barDel_\calI$ is a proper subset of $\barDel_{\partial_j \calI}$, then Proposition \ref{prop: inc are homologous to zero} shows that the inclusion $\barDel_\calI\into \barDel_{\partial_j \calI}$ is homologous to zero, as required.
\end{proof}

\begin{lemma} \label{lem: MV collapses}
If the reduced diagonal map of $A$ is homologous to zero, then the Mayer-Vietoris spectral sequence of $\tilDel_s=\barDel^{(2,1,\ldots, 1)}\union \cdots\union \barDel^{(1,1\ldots, 2)}$ collapses at the $E^1$ term so that
$$\redH_t(\tilDel_s)\cong\Dirsum_{
\substack{
\# \calI+q-1=t \\
\# \calI \ge 1
}
}
\redH_q(\barDel_\calI).$$
Here $\calI$ ranges over the nonempty subsets of $\{(2,1,\ldots, 1),\ldots, (1,1\ldots, 2)\}$.
\end{lemma}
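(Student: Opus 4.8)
The plan is to show that all higher differentials $d^r$ ($r \ge 1$) in the Mayer-Vietoris spectral sequence for the cover $\tilDel_s = \barDel^{(2,1,\ldots,1)} \cup \cdots \cup \barDel^{(1,1,\ldots,2)}$ vanish, so that $E^1 = E^\infty$, and then read off the stated formula for $\redH_t(\tilDel_s)$ from the general description of the $E^1$ term in \eqref{eq: generic E1 term}. The key input is Corollary \ref{cor: prep for E1 collapse}: every face inclusion $\barDel_\calI \hookrightarrow \barDel_{\partial_j \calI}$ is homologous to zero.

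First I would recall the construction of the spectral sequence from the double complex: the horizontal differential $\partial^h$ on $E_{p,q} = \Dirsum_{\#\calI = p}(\Z \barDel_\calI \tensor \F_2)_q$ is the alternating sum $\partial^h(\alpha_q^\calI) = \sum_j (-1)^j \alpha_q^{\partial_j \calI}$, where each summand is the image of $\alpha_q^\calI$ under the chain map induced by the inclusion $\barDel_\calI \hookrightarrow \barDel_{\partial_j \calI}$. The first differential $d^1$ is induced by $\partial^h$ on vertical homology, i.e.\ on $\Dirsum_{\#\calI=p} \redH_q(\barDel_\calI)$; since each inclusion-induced map $\redH_q(\barDel_\calI) \to \redH_q(\barDel_{\partial_j \calI})$ is zero by Corollary \ref{cor: prep for E1 collapse}, the map $d^1$ is identically zero. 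Hence $E^2 = E^1$.

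Next I would handle the higher differentials. Recall from the exact-couple description in the excerpt that $d^r$ for $r \ge 1$ is induced by $i \circ \zeta^{-r+1} \circ j$, where $j$ has bidegree $(0,-1)$ and is induced (via the short exact sequences $0 \to F_p \to E_p \to F_{p-1} \to 0$) ultimately by the horizontal inclusion maps, $\zeta$ is the connecting map, and $i$ is induced by an inclusion $F_p \hookrightarrow E_p$. The cleanest way to finish is to observe that since $d^1 = 0$ we have $E^2 = E^1$, and then argue inductively: on each page the differential $d^r$ is a composite that factors through a map induced by one of the inclusion-of-intersections chain maps on homology, all of which are zero; alternatively, one can note that once $d^1 = 0$, the spectral sequence of this particular cover degenerates because the only maps relating different columns at the chain level are built from these null-homologous inclusions, so there is no room for a nonzero $d^r$. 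Either way, $E^\infty = E^1$.

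The main obstacle is being careful about what exactly "$d^r$ is built from the inclusion maps" means for $r \ge 2$: the higher differentials involve the connecting homomorphism $\zeta$ and a zig-zag through the filtration quotients $F_p$, and one must verify that the relevant composites still vanish once the $E^1$-level maps do. The slickest resolution is: since $d^1=0$, we have $E^2_{p,q}=E^1_{p,q}=\redH_q(\barDel_\calI)$-summands; now for $r\ge 2$ the differential $d^r\co E^r_{p,q}\to E^r_{p-r,q+r-1}$ lands in or originates from a column that is unreachable by the horizontal chain maps in $r$ steps unless those intermediate maps are nonzero on homology — but Corollary \ref{cor: prep for E1 collapse} kills every single-step inclusion map on homology, and a standard diagram chase in the exact couple shows the $r$-fold composite $i\zeta^{-r+1}j$ therefore vanishes. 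Once collapse is established, $\redH_t(\tilDel_s) \cong \Dirsum_{p+q-1=t} E^\infty_{p,q} = \Dirsum_{\#\calI + q - 1 = t,\ \#\calI \ge 1} \redH_q(\barDel_\calI)$ by \eqref{eq: generic E1 term}, which is exactly the claimed identity; here $\calI$ ranges over nonempty subsets of $\{(2,1,\ldots,1),\ldots,(1,1,\ldots,2)\}$ with $\barDel_\calI \neq \emptyset$, but every such intersection is a nonempty smash product by Proposition \ref{prop: intersections as smash prod}.
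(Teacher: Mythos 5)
Your proposal follows the same overall plan as the paper's proof: use Corollary~\ref{cor: prep for E1 collapse} to kill every inclusion-induced map $\redH_q(\barDel_\calI)\to\redH_q(\barDel_{\partial_j\calI})$, conclude that $d^1=0$, declare that the spectral sequence collapses at $E^1$, and read off the formula from \eqref{eq: generic E1 term} together with Proposition~\ref{prop: intersections as smash prod} (to know no $\barDel_\calI$ is empty). To that extent you have reproduced the paper's argument.

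Where you part ways is that you explicitly raise the question of the higher differentials $d^r$, $r\ge 2$, which the paper's proof simply does not discuss: it passes from ``$d^1=0$'' directly to ``therefore the spectral sequence collapses at $E^1$.'' You are right that this is a point that requires attention for $N=s-1\ge 3$, since in general $d^1=0$ does not by itself force $E^1=E^\infty$. However, the justification you offer is not actually an argument. The key difficulty is precisely that $d^r$ for $r\ge 2$ is \emph{not} a composite of $E^1$-level maps: in the zig-zag description from the column filtration, a class $\alpha\in E^r_{p,q}$ is represented by a $\partial^v$-cycle $z_p$ together with chains $z_{p-1},\ldots,z_{p-r+1}$ satisfying $\partial^h z_{p-i}=\partial^v z_{p-i-1}$, and $d^r\alpha$ is the class of $\partial^h z_{p-r+1}$. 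The intermediate $z_{p-i}$ ($i\ge 1$) are not $\partial^v$-cycles, so the fact that each $(\partial^h)_*\co\redH_q(\barDel_\calI)\to\redH_q(\barDel_{\partial_j\calI})$ is zero cannot simply be invoked to conclude $[\partial^h z_{p-r+1}]=0$. Equivalently, in the exact-couple picture $d^r=i\circ\zeta^{-r+1}\circ j$, the middle step $\zeta^{-r+1}$ passes through the connecting homomorphisms and is not an inclusion-induced map, so the vanishing of $ij$ does not formally imply the vanishing of $i\zeta^{-r+1}j$. Your phrases ``there is no room for a nonzero $d^r$'' and ``a standard diagram chase \ldots\ shows the $r$-fold composite therefore vanishes'' assert the conclusion without supplying this missing step.

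So: your proposal matches the paper's proof for $d^1=0$ and for assembling the final formula, but the higher-differential vanishing is left as a genuine gap. You correctly flag it as the obstacle — the paper does not — yet your proposed resolution would not survive a careful write-up. If you want to close the gap, you need an argument specific to this situation (for example, using that over $\F_2$ a null-homologous chain map is chain null-homotopic to build an explicit filtered chain homotopy equivalence between the total complex and one with zero horizontal differential, and then carefully checking that the resulting transferred higher operations vanish), rather than asserting that the higher differentials ``factor through'' the zero maps of Corollary~\ref{cor: prep for E1 collapse}.
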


\begin{proof} The differential of the $E^1$-term is given as the following composition:
$$d^1_{p,q}:\co E^1_{p,q}\xrightarrow{j} H_q(F_{p-1})\xrightarrow{i} E^1_{p-1,q}.$$
The homology class of $\alpha_q^{\calI}$ in $\redH_q(\barDel_\calI)$ is mapped to the homology class of $$\sum_{j=1}^{\# \calI} (-1)^{j} \alpha_q^{\partial_j \calI}$$ in $\Dirsum_{j=1}^{\# \calI} \redH_q(\barDel_{\partial_j \calI})$. Since the reduced diagonal is homologous to zero, Corollary \ref{cor: prep for E1 collapse} tells us that each map $\redH_q(\barDel_\calI)\to \redH_q(\barDel_{\partial_j \calI})$ is zero. Therefore $\alpha_q^{\partial_j \calI}=0$ and $\sum_{j=1}^{\# \calI} (-1)^{j} \alpha_q^{\partial_j \calI}=0$ so that the differential $d^1$ is the zero map. Therefore the Mayer-Vietoris spectral sequence collapses at the $E^1$ term.

Using the expression \eqref{eq: generic E1 term} for the $E^1$ term,
$$\redH_t(\tilDel_s)\cong\Dirsum_{p+q-1=t}\,\Dirsum_{\substack{\barDel_\calI\neq \emptyset \\ \# \calI=p\ge 1}} \redH_q(\barDel_\calI)\cong\Dirsum_{
\substack{
\# \calI+q-1=t \\
\# \calI \ge 1
}
}
\redH_q(\barDel_\calI),$$
since no $\barDel_\calI$ is empty.
\end{proof}

\begin{proof}[Proof of Theorem \ref{thm: homology decomp of tilDel}]
By the above Lemma implies the following expression for the mod 2 Betti numbers:
$$\redh_t(\tilDel_s)=\sum_{
\substack{
\# \calI+q-1=t \\
\# \calI \ge 1
}
}
\redh_q(\barDel_\calI).$$
To simplify this expression, recall Proposition \ref{prop: intersections as smash prod} which states that for $p=1,\ldots, s-1$, the collections $\{\barDel_\calI|\, \# \calI=p\}$ and $\{\barDel^\alpha|\, \dim\alpha = s-p\}$ are identical.
% Note that the expressions $q-\dim\alpha = t-s+1$ and $|I|+q-1=t$ are equivalent if and only if $\dim\alpha = s-|I|$.
Thus the above expression becomes:
\begin{equation} \label{eq: temp of part 2}
\redh_t(\tilDel_s)=
\sum_{\substack{|\alpha|=s \\ (s-\dim\alpha) +q-1=t \\ \dim \alpha\le s-1}} \redh_q(\barDel^\alpha)
= \sum_{\substack{|\alpha|=s \\ q-\dim\alpha = t-s+1 \\ \dim \alpha\le s-1}} \redh_q(\barDel^\alpha).
\end{equation}

Notice if $\dim\alpha=d$, then $$\redh_q(\barDel^\alpha) = \sum_{|\nu|=q} \redh_{\nu_1}(\barDel^{\alpha_1}(A)) \cdots \redh_{\nu_d}(\barDel^{\alpha_d}(A)).$$
Since $\barDel^1(A)=X/G$ by convention and $\barDel^k(A)$ is isomorphic to $A$ for $k=2,3,\ldots$, the homology of $\tilDel_s$ depends only on the homology of $A$ and $X/G$. There must exists constants $c_{\lambda,\mu}$ depending on the multi-indices $\lambda$ and $\mu$ such that
$$\redh_t(\tilDel_s)= \sum_{\lambda,\mu} c_{\lambda,\mu}\redh_\lambda(X/G) \redh_\mu (A).$$

Let $I$ denote $\dim\lambda$ and $J$ denote $\dim\mu$. Thus $c_{\lambda,\mu}$ is the number of multi-indices $\alpha$ which are permutations of $(\underbrace{1,\ldots, 1}_I, a_1,\ldots, a_J)$ for some integers $a_1,\ldots, a_J\ge 2$ that satisfy $I+a_1+\cdots + a_J=s$. After making the substitution $b_i=a_i-2$, this condition is equivalent to $b_1+\cdots + b_J=s-I-2J$ where each $b_i$ is a nonnegative integer. There are $\binom{(s-I-2J)+(J-1)}{J-1}=\binom{s-I-J-1}{J-1}$ nonnegative integer solutions $(b_1,\ldots, b_J)$ to this equation. Thus $c_{\lambda,\mu}=\binom{I+J}{J}\binom{s-I-J-1}{J-1}$.

Since $q=|\nu|=|\lambda|+|\mu|$ and $\dim\nu=\dim\lambda+\dim\mu$, so the condition $q-\dim\alpha=t-s+1$ in \eqref{eq: temp of part 2} is equivalent to $|\lambda|+|\mu|=t-s+\dim\lambda+\dim\mu+1$. Similarly, since $\dim\alpha=\dim\nu$, the condition $\dim\alpha\le s-1$ in \eqref{eq: temp of part 2} is equivalent to $s\ge \dim\lambda+\dim\mu + 1$. Thus we obtain the required
\begin{equation} \label{eq: homology decomp of tilDel}
\redh_t(\tilDel_s;\F_2)\isom \sum_{\substack{|\lambda|+|\mu| =t-s+\dim\lambda+\dim\mu + 1 \\ 2\le\dim\lambda+\dim\mu+1\le s}} c_{\lambda,\mu}\redh_\lambda(X/G;\F_2) \redh_\mu (A;\F_2),
\end{equation}
\end{proof}

Note that \eqref{eq: homology decomp of tilDel} is in fact a finite sum, since the condition $\dim\lambda+\dim\mu+1\le s$ implies that $|\lambda|+|\mu|=t-s+(\dim\lambda+\dim\mu+1)\le t - s + s =t$. As the length is bounded above, there can only be finitely many $\lambda$ and $\mu$ that satisfy $\dim\lambda+\dim\mu+1\le s$.

\section{Proof of Proposition \ref{prop: S2 S1 example}} \label{sec: eg}

We illustrate the efficacy of the homology decompositions in Theorems \ref{thm: homology decomp of JGX} and \ref{thm: homology decomp of tilDel} by computing all the mod 2 Betti numbers of $\Omega(X\rtimes_G E_\infty^1 G)$ for an example $X=S^2\union_{S^1} S^2$. The discrete group $G=C_2$ acts on the 2-sphere $S^2$ antipodally with the equatorial circle $S^1$ as the fixed set. The $G$-space $X$ is formed by taking two 2-spheres $S^2$ with the antipodal action and identifying their equatorial circles.

This pointed $G$-space is equivariantly homotopy equivalent to the following. Take two pairs of discs (that is, four discs in total), and identify all the boundary circles. Let $G$ act on this union $D^2\union D^2\union D^2\union D^2$ by switching the discs in each pair.

\begin{proof}[Proof of Proposition \ref{prop: S2 S1 example}]
Put a simplicial $G$-structure on the $G$-space. Write the simplicial $G$-set as $X=S_1^2\union_{S^1} S_2^2$. The subscripts serve to distinguish each of the two $S^2$'s. For $i=1,2$, let $D^+_i$ denote the upper hemisphere of $S_i^2$ and $D^-_i$ the lower hemisphere. The antipodal $G$-action sends each upper hemisphere to the lower hemisphere, so $D^-_i=D^+_i t$. Then
\begin{align*}
X &=(D^+_1\union D^+_2)\union_{S^1} (D^-_1\union D^-_2) \\
&=(D^+_1\union D^+_2)\union_{S^1} (D^+_1t\union D^+_2t)\\
&=(D^+_1\union D^+_2)\union_{S^1} (D^+_1\union D^+_2)t
\end{align*}
By Proposition \ref{prop: form of decomposable Z2}, the orbit projection of $X$ has a section. Thus Theorem \ref{thm: homology decomp of JGX} applies (here and below we suppress the coefficient $\F_2$ in the notation):
\begin{equation} \label{eq: example}
\redH_n(\Omega(X\rtimes_G W_\infty^1 G))=\Dirsum_{s=1}^\infty \redH_n\left((S^2)^{\smash s}/\tilDel_s\right),
\end{equation}
Since $S^1\to S^1\smash S^1\cong S^2$ is mod 2 homologous to zero, Theorem \ref{thm: homology decomp of tilDel} also applies:
\begin{multline*}
\redh_n(\tilDel_s) = \sum_{J\ge 1}\sum_{I=n-s+1}\binom{I+J}{J}\binom{t-2I-J}{J-1} \\
\underbrace{\redh_2(S^2)\cdots \redh_2(S^2)}_{I} \tensor \underbrace{\redh_1(S^1)\cdots\redh_1(S^1)}_{J}.
\end{multline*}
Since $\redh_2(S^2)=\redh_1(S^1)=1$, so the Betti number is
\begin{align}
\redh_n(\tilDel_s) &= \sum_{J\ge 1}\sum_{I=n-s+1}\binom{I+J}{J}\binom{n-2I-J}{J-1} \notag\\
&= \sum_{J\ge 1}\binom{n-s+1+J}{J}\binom{n-2(n-s+1)+J}{J-1} \notag\\
&= \sum_{J\ge 1}\binom{n-s+1+J}{J}\binom{2s-n-J-2}{J-1} \notag\\
&= \sum_{J= 1}^{2s-3}\binom{n-s+1+J}{J}\binom{2s-n-J-2}{J-1}. \label{eq: example for tilDel}
\end{align}
Note that if the binomial coefficient $\binom{2s-n-J-2}{J-1}$ is nonzero, then $2s-n-J-2\ge J-1$. That is, $n\le 2s-2J-1\le 2s-3$ since $J\ge 1$. Thus $\redH_n(\tilDel_s)=0$ if $n>2s-3$. Combining this observation with the fact that the only nontrivial homology group of $((S^2\union_{S^1} S^2t)/G)^{\smash s}=(S^2)^{\smash s}=S^{2s}$ is in the $2s$-th dimension, the short exact sequence $\tilDel_s\to S^{2s} \to S^{2s}/\tilDel_s$ induces the following long exact sequence in homology:
\begin{center}
\begin{tabular}{cccccc}
  % after \\: \hline or \cline{col1-col2} \cline{col3-col4} ...
        &   $\cdots$                   & $\to$ & 0                         & $\to$ & $\redH_{2s+2}\left(S^{2s}/\tilDel_s\right)$\\
  $\to$ & 0                            & $\to$ & 0                         & $\to$ & $\redH_{2s+1}\left(S^{2s}/\tilDel_s\right)$ \\
  $\to$ & 0                            & $\to$ & $\redH_{2s}(S^{2s})=\F_2$ & $\to$ & $\redH_{2s}\left(S^{2s}/\tilDel_s\right)$ \\
  $\to$ & 0                            & $\to$ & 0                         & $\to$ & $\redH_{2s-1}\left(S^{2s}/\tilDel_s\right)$ \\
  $\to$ & 0                            & $\to$ & 0                         & $\to$ & $\redH_{2s-2}\left(S^{2s}/\tilDel_s\right)$ \\
  $\to$ & $\redH_{2s-3}(\tilDel_{s})$ & $\to$ & 0                         & $\to$ & $\redH_{2s-3}\left(S^{2s}/\tilDel_s\right)$ \\
        &                              &       & $\cdots$                  &       &  \\
  $\to$ & $\redH_{1}(\tilDel_{s})$     & $\to$ & 0                         & $\to$ & $\redH_{1}\left(S^{2s}/\tilDel_s\right)$ \\
  $\to$ & 0                            &       &                   &       &  \\
\end{tabular}
\end{center}
Thus
$$\redH_n\left(S^{2s}/\tilDel_s\right)=\begin{cases}
0,                      & n\ge 2s+1,\\
\F_2                    & n=2s, \\
0,                      & n=2s-1, \\
\redH_{n-1}(\tilDel_s)  & n\le 2s-2.
\end{cases}$$
For $k\ge 1$, applying this formula to \eqref{eq: example} gives
\begin{align*}
\redH_{2k}(\Omega(X\rtimes_G W_\infty^1 G)) &\isom \redH_{2k}\left(S^{2k}/\tilDel_k\right)\dirsum\Dirsum_{r=k+1}^\infty \redH_{2k}\left(S^{2r}/\tilDel_r\right) \\
&\isom \F_2 \dirsum \Dirsum_{r=k+1}^\infty \redH_{2k-1}(\tilDel_r). \\
\end{align*}
By \eqref{eq: example for tilDel}, the even Betti number is:
\begin{align*}
\redh_{2k}(\Omega(X\rtimes_G W_\infty^1 G)) &= 1+\sum_{r=k+1}^\infty \sum_{J= 1}^{2r-3}\binom{2k-r+J}{J}\binom{2r-2k-J-1}{J-1} \\
&= 1+\sum_{r=k+1}^{2k} \sum_{J= 1}^{2r-3}\binom{2k-r+J}{J}\binom{2r-2k-J-1}{J-1}.
\end{align*}
Here the upper bound $r\le 2k$ is obtained by observing that $\binom{2k-r+J}{J}$ is nonzero only if $2k-r+J\ge J$ or $r\le 2k$.

Similarly we can compute the odd Betti number:
$$\redh_{2k+1}(\Omega(X\rtimes_G W_\infty^1 G)) = \sum_{r=k+2}^{2k+1} \sum_{J= 1}^{r-k-1}\binom{2k-r+J+1}{J}\binom{2r-2k-J-2}{J-1}$$
for $k\ge 0$.

Take geometric realization to obtain the required result.
\end{proof}

Using these formulas, we compute by hand the Betti numbers in the dimension 1 to 12 to be
$$\{\redh_n(\Omega(X\rtimes_G E_\infty^1 G);\F_2)\}_{n=1,\ldots, 12}=\{0,2,1,5,5,14,19,42,66,131,221,417\}.$$
A search with the Online Encyclopedia of Integer sequences \cite{OEIS} gives the sequence A052547. For $n\ge 0$, set $a_n$ to be the coefficient of $x^n$ in the power series expansion of $(1-x)/(x^3 - 2x^2 - x+1)$. The Encyclopedia informs us that, for $1\leq n\leq 12$:
\[
a_n= \redh_n(\Omega(X\rtimes_G E_\infty^1 G);\F_2)
\]
Note that for $n=0$, the initial term $a_0=1$ of sequence A052547 differs from $\redh_0(\Omega(X\rtimes_G E_\infty^1 G);\F_2)=0$; this is because we are using the reduced homology. This leads us to conjecture the following:
\begin{conjecture}
The reduced mod $2$ Poincar\'{e} series of $\Omega(X\rtimes_G E_\infty^1 G)$ is
\[
\sum_{n=0}^\infty \redh_n(\Omega(X\rtimes_G E_\infty^1 G);\F_2) \,x^n = \frac{1-x}{x^3 - 2x^2 - x+1} - 1.
\]
\end{conjecture}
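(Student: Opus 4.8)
The plan is to compute the reduced mod $2$ Poincar\'e series
\[
B(x):=\sum_{n\ge 0}\redh_n\bigl(\Omega(X\rtimes_G E_\infty^1 G);\F_2\bigr)\,x^n
\]
directly from the homology decompositions of Theorems \ref{thm: homology decomp of JGX} and \ref{thm: homology decomp of tilDel} (as already specialised in the proof of Proposition \ref{prop: S2 S1 example}), and to check that it equals $\frac{1-x}{x^3-2x^2-x+1}-1$. I would first record that the two closed forms in the conjecture agree: since $x^3-2x^2-x+1=1-x-2x^2+x^3$, clearing denominators gives $\frac{1-x}{x^3-2x^2-x+1}-1=\frac{(1-x)-(1-x-2x^2+x^3)}{1-x-2x^2+x^3}=\frac{x^2(2-x)}{1-x-2x^2+x^3}$, so it suffices to prove $B(x)=\frac{x^2(2-x)}{1-x-2x^2+x^3}$.

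The first step is to assemble the input produced in the proof of Proposition \ref{prop: S2 S1 example}. There \eqref{eq: example} gives $\redH_*(\Omega(X\rtimes_G E_\infty^1 G))\isom\Dirsum_{s\ge 1}\redH_*(S^{2s}/\tilDel_s)$, and the long exact sequence computation gives $\redh_n(S^{2s}/\tilDel_s)=\redh_{n-1}(\tilDel_s)$ for $n\le 2s-2$, $\redh_{2s}(S^{2s}/\tilDel_s)=1$, and $\redh_n(S^{2s}/\tilDel_s)=0$ otherwise, together with $\redh_m(\tilDel_s)=0$ for $m>2s-3$. Writing $Q_s(x):=\sum_m\redh_m(\tilDel_s)x^m$ (a polynomial of degree at most $2s-3$), the Poincar\'e series of the $s$-th summand is therefore $x^{2s}+xQ_s(x)$, whence $B(x)=\sum_{s\ge 1}x^{2s}+x\sum_{s\ge 1}Q_s(x)=\frac{x^2}{1-x^2}+x\,R(x)$, where $R(x):=\sum_{s\ge 1}Q_s(x)=\sum_{s,m}\redh_m(\tilDel_s)x^m$.

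The heart of the argument is to put $R(x)$ in closed form. I would substitute the explicit Betti numbers \eqref{eq: example for tilDel}, namely $\redh_n(\tilDel_s)=\sum_{J\ge 1}\binom{n-s+1+J}{J}\binom{2s-n-J-2}{J-1}$, into $R(x)$ and reindex the sum over $(s,n,J)$ by $m=n-s+1$ and $\ell=s-m-J$, so that $n=2m+\ell+J-1$ and $s=m+\ell+J$; the nonzero terms are exactly those with $m\ge 0$, $J\ge 1$, $\ell\ge J$. This factors $R(x)$ as $\sum_{J\ge 1}x^{J-1}\bigl(\sum_{m\ge 0}\binom{m+J}{J}x^{2m}\bigr)\bigl(\sum_{\ell\ge J}\binom{\ell-1}{J-1}x^{\ell}\bigr)$. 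Applying the elementary identities $\sum_{m\ge 0}\binom{m+J}{J}z^m=(1-z)^{-J-1}$ and $\sum_{\ell\ge J}\binom{\ell-1}{J-1}x^{\ell}=x^{J}(1-x)^{-J}$ collapses the two inner sums, leaving $R(x)=\frac{1}{x(1-x^2)}\sum_{J\ge 1}\bigl(\frac{x^2}{(1-x^2)(1-x)}\bigr)^{J}$; summing this geometric series (legitimate as a formal power series, since its ratio has zero constant term) and simplifying via $(1-x^2)(1-x)-x^2=1-x-2x^2+x^3$ yields $R(x)=\frac{x}{(1-x^2)(1-x-2x^2+x^3)}$. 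Feeding this back gives $B(x)=\frac{x^2}{1-x^2}+\frac{x^2}{(1-x^2)(1-x-2x^2+x^3)}=\frac{x^2\,(2-x-2x^2+x^3)}{(1-x^2)(1-x-2x^2+x^3)}$, and the factorisation $2-x-2x^2+x^3=(1-x^2)(2-x)$ cancels the factor $1-x^2$ to give $B(x)=\frac{x^2(2-x)}{1-x-2x^2+x^3}$, as required.

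The step I expect to be the main obstacle is the reindexing bookkeeping rather than any conceptual difficulty: one must check that $(s,n,J)\leftrightarrow(m,\ell,J)$ is a bijection onto the stated range, that extending all summation limits to $m\ge 0$, $J\ge 1$, $\ell\ge J$ only introduces terms whose binomial coefficients vanish (and in particular that \eqref{eq: example for tilDel}, valid for $s\ge 2$, correctly returns $0$ for $s=1$ since $\tilDel_1=\pt$), and that the interchange of the summation over $J$ with the inner summations and the geometric-series expansion are valid identities of formal power series — which holds because every coefficient of $x^n$ receives contributions from only finitely many pairs $(s,J)$. As a final check one can compare with the tabulated values $\{0,2,1,5,5,14,\ldots\}$, equivalently with the recurrence $b_n=b_{n-1}+2b_{n-2}-b_{n-3}$ and initial data $b_0=b_1=0$, $b_2=2$ read off from the denominator $1-x-2x^2+x^3$; agreement there confirms that the bookkeeping has been carried out correctly.
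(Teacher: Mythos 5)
The statement you are asked to prove is stated in the paper only as a \emph{conjecture}: the authors compute the Betti numbers by hand in dimensions $1$ through $12$, observe a match with OEIS sequence A052547, and stop there. Your argument, by contrast, gives a genuine proof by summing the explicit Betti number formula \eqref{eq: example for tilDel} into a closed-form generating function, so you are going strictly beyond what the paper does.

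I have checked the calculation and it is correct. The key ingredients all hold up: the Poincar\'e series of $(S^2)^{\smash s}/\tilDel_s$ is $x^{2s}+x Q_s(x)$ with $Q_s(x)=\sum_m\redh_m(\tilDel_s)x^m$, since $\redh_m(\tilDel_s)=0$ for $m>2s-3$ and the long exact sequence gives the claimed dichotomy; the reindexing $m=n-s+1$, $\ell=s-m-J$ is a bijection from the triples $(s,n,J)$ with nonvanishing summand onto $\{m\ge 0,\ \ell\ge J\ge 1\}$ (the conditions $m\ge 0$ and $\ell\ge J$ are exactly when $\binom{m+J}{J}$ and $\binom{\ell-1}{J-1}$ are nonzero in the counting interpretation used to derive $c_{\lambda,\mu}$, and $s=m+\ell+J\ge 2$ comes out automatically, which is why $\tilDel_1=\pt$ contributes nothing); the two inner generating functions $\sum_{m\ge 0}\binom{m+J}{J}z^m=(1-z)^{-J-1}$ and $\sum_{\ell\ge J}\binom{\ell-1}{J-1}x^\ell=x^J(1-x)^{-J}$ are standard; the geometric series in $J$ is a legitimate formal power series identity because its ratio $\frac{x^2}{(1-x^2)(1-x)}$ has zero constant term; and the final algebra, including $(1-x^2)(1-x)-x^2=1-x-2x^2+x^3$ and $2-x-2x^2+x^3=(1-x^2)(2-x)$, is correct, giving $B(x)=\frac{x^2(2-x)}{1-x-2x^2+x^3}=\frac{1-x}{x^3-2x^2-x+1}-1$. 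The recurrence $b_n=b_{n-1}+2b_{n-2}-b_{n-3}$ with $b_0=b_1=0$, $b_2=2$ reproduces the table $\{0,2,1,5,5,14,19,42,66,131,221,417\}$, which gives an independent sanity check. One small stylistic caution: when you invoke \eqref{eq: example for tilDel} you should make explicit that the binomial coefficients are being read with the convention that $\binom{a}{b}=0$ whenever $0\le a<b$ or $a<0$ (which is the convention forced by the combinatorial derivation of $c_{\lambda,\mu}$ in the proof of Theorem \ref{thm: homology decomp of tilDel}); otherwise the claim that the formula returns $0$ when extended outside the theorem's stated range is not automatic. With that convention made precise, this is a complete and correct proof of the conjecture.
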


The sequence $a_n$ has a geometric interpretation in terms diagonals lengths in the regular heptagon with unit side length (see \cite{Steinbach97,Lang12}.)
%Let $V_0V_1\cdots V_6$ be a regular heptagon with unit side length. Set $\rho:=V_0V_2$ and $\sigma:=V_0V_3$.
These diagonal lengths are related to the Chebyshev polynomials, which are important in approximation theory.


\begin{thebibliography}{99}
%\bibitem{Awodey10} S. Awodey, \emph{Category theory}, Second edition. Oxford Logic Guides. Oxford University press. vol 52. 2010.

\bibitem{BersteinHilton60} I. Berstein and P.J. Hilton, Category and generalized Hopf invariants, \emph{Illinois Journal of Mathematics}, 4(3):437-451, 1960.
\bibitem{BottTu82} R. Bott and L. Tu, \emph{Differential forms in algebraic topology}, Springer-Verlag New York. GTM vol 82. 1982.

%\bibitem{Curtis71} E.B. Curtis. Simplicial homotopy theory. {\emph{Advances in Math.,}} 6:107-209, 1971.

%\bibitem{BousfieldKan87} A.K. Bousfield and D.M. Kan, \emph{Homotopy limits, completions and localizations}, vol. 304. Springer. 1972.

\bibitem{Cai11} L. Cai, \emph{Mayer-Vietoris spectral sequence and homology of moment-angle complexes}, Master's thesis, Fudan University, 2011.

\bibitem{Carlsson84} G. Carlsson, A simplicial group construction for balanced products, \emph{Topology}, 23(1):8S-89, 1984.

\bibitem{CLW11} J. Chen, Z. L\"{u} and J. Wu, Orbit configuration spaces of small covers and quasi-toric manifolds, \emph{arXiv:1111.6699v1 [math.AT] 29 Nov 2011}.

%\bibitem{Fiedorowicz84} Z. Fiedorowicz, Classifying spaces of topological monoids and categories, \emph{American Journal of Mathematics}, 106(2):301-350, 1984.

\bibitem{Fox39} R.H. Fox, On the Lusternik-Schnirelmann category, \emph{Annals of Mathematics, Second Series}, 42(2):333-370, 1941.

\bibitem{GeorssJardine99} P.G. Georss and J.F. Jardine, \emph{Simplicial homotopy theory}, Progress in Mathematics, vol. 174, Birkh$\ddot{a}$user Verlag, Basel, 1999.

\bibitem{GaoThesis} M. Gao, \emph{Universal simplicial monoid constructions on simplicial categories and their associated spectral sequences}, PhD thesis, National University of Singapore, 2012.

\bibitem{Gruenberg57} K.W. Gruenberg, Residual properties of infinite soluble groups, \emph{Proc. London Math. Soc.}, s3-7 (1):29-62, 1957. MR 87652.

\bibitem{Hatcher04} A. Hatcher, \emph{Spectral} \emph{sequences} \emph{in algebraic} \emph{topology}, available at \emph{http://www.math.cornell.edu/~hatcher/SSAT/SSATpage.html}, 2004.

\bibitem{James55} I.M. James, Reduced product spaces, \emph{The Annals of Mathematics, 2nd Ser.}, 62(1):170-197, 1955.

%\bibitem{Kan58} D. Kan, A combinatorial definition of homotopy groups, \emph{The Annals of Mathematics, 2nd Ser.}, 67 1958, 282–312.

\bibitem{Lang12} W. Lang, The field ${\mathbb Q}(2\cos(\frac{\pi}{n}))$, its Galois group, and length ratios in the regular n-gon, \emph{arXiv:1210.1018v1 [math.GR] 3 Oct 2012}.

\bibitem{Milnor72} J.W. Milnor, On the construction F(K), in J. F. Adams, \emph{Algebraic topology : a student's guide}, Lon. Math. Soc. Lecture Notes Series 4:119-136, 1972.

%\bibitem{Neisendorfer10} J. Neisendorfer, \emph{Algebraic methods in unstable homotopy theory}, New York : Cambridge University Press, 2010.

\bibitem{OEIS} The On-Line Encyclopedia of Integer Sequences, published electronically at http://oeis.org, 2011.

\bibitem{Passi79} I.B.S. Passi, \emph{Group rings and their augmentation ideals}, Lecture Notes in Mathematics, 715, Springer, Berlin, 1979.

\bibitem{Quillen68} D.G. Quillen, On the associated graded ring of a group ring, \emph{Journal of Algebra}, 10: 411--418, 1968.

\bibitem{Steinbach97} P. Steinbach, Golden fields: a case for the heptagon. \emph{Mathematics Magazine}, 70(1):22-31, 1997.

\bibitem{Wu97} J. Wu, On fibrewise simplicial monoids and Milnor-Carlsson's constructions. \emph{Topology}, 37(5):1113-1134, 1998.
\end{thebibliography}
\end{document}